\newtheorem{theorem}{Theorem}[section]
\newtheorem{corollary}[theorem] {Corollary}
\newtheorem{definition}[theorem]{Definition}
\newtheorem{example}[theorem]{Example}
\newtheorem{proposition}[theorem]{Proposition}
\newtheorem{remark}[theorem]{Remark}
\newcommand{\TC}{\mathrm{TC}}
\newcommand{\ct}{\mathrm{cat}}
\newcommand{\sct}{\mathrm{secat}}
\newcommand{\Top}{\mathrm{Top}}
\newcolumntype{x}[1]{>{\centering\arraybackslash}p{#1}}
\begin{document}
\title[]{On sequential versions of various parametrized invariants}
\author[N. Daundkar]{Navnath Daundkar}
\address{Department of Mathematics, Indian Institute of Technology Madras, Chennai, India.}
\email{navnath@iitm.ac.in}
\author[A. Sarkar ]{Abhishek Sarkar}
\address{Department of Mathematics, Indian Institute of Science Education and Research Pune, India}
\email{abhisheksarkar49@gmail.com}
\author[A. Sarkar]{Ankur Sarkar}
\address{The Institute of Mathematical Sciences, A CI of Homi Bhabha National Institute,  Chennai, India} 
\email{ankurimsc@gmail.com}

\thanks{}

\begin{abstract} 
In this paper, we introduce and study sequential versions of several fibrewise homotopy invariants, including parametrized topological complexity, parametrized (subspace) homotopic distance. We investigate their basic properties, establish relationships among them, and compare them with the corresponding classical homotopical invariants. 
\end{abstract}

\keywords{Parametrized homotopic distance, fibrewise sectional category, fibrewise unpointed Lusternik–Schnirelman category, sequential parametrized topological complexity, fibrewise $H$-spaces}
\subjclass[2020]{55M30, 55S40, 55R70, 55U35, 55P45}
\maketitle

\section{Introduction}
In motion-planning theory, topological spaces often serve as configuration spaces that encode all allowable states of a mechanical or robotic system. Recent advances in this area have focused on invariants that capture the complexity of navigating these configuration spaces, particularly in settings where multiple stages or external parameters are involved. In this context, sequential and parametrized invariants have become central, notably the parametrized topological complexity introduced by Cohen–Farber–Weinberger \cite{C-F-W} and later extended by Farber–Paul \cite{Farber-Paul1} to the sequential setting. These invariants model algorithms that must guide a system through a prescribed sequence of states under varying external conditions, encoded by a fibration $p:E\to B$ whose fibers $X_b$ represent system states. The quantity $\TC_r[p\colon E\to B]$ measures the complexity of universal motion-planning algorithms in this parametrized, sequential context. These developments build on earlier foundational invariants, most notably the Lusternik–Schnirelmann (LS) category $\ct(X)$ \cite{L-S-cat}, Farber’s topological complexity $\TC(X)$ \cite{F}, and Rudyak’s higher (or sequential) topological complexity $\TC_r(X)$ \cite{Rudyak_sequentialTC} for a path-connected topological space $X$, which revealed deep connections between algebraic topology, critical point theory, and robot motion planning \cite{Farber_Book}. These structural parallels motivate the search for broader generalizations in various directions.

The notion of homotopic distance introduced by Macías-Virgós and Mosquera-Lois \cite{macias2022homotopic} provides a unified perspective on classical invariants such as $\ct(X)$ and $\TC(X)$, and has motivated the study of their sequential analogues, developed by Borat and Vergili \cite{Borat-Vergili}. These frameworks highlight deep structural connections among LS category, topological complexity, and their sequential versions. More recently, this line of inquiry has been developed in the parametrized (fibrewise) setting. Building on García-Calcines’s notion of fibrewise topological complexity $\TC_B(X)$ for a fibrewise space $(X,p_X)$ over $B$ \cite{Calcines-fibrewiseTC}, García-Calcines and the first author later introduced the parametrized (fibrewise) homotopic distance \cite{Calcines-Navnath}. 
They obtained cohomological and connectivity bounds, studied functorial properties such as compositions, products, and a triangle inequality, and gave estimates for fibre-preserving maps using induced fibre maps and the fibrewise LS category. A pointed version was also developed with analogous bounds and equivalence criteria.

The classical homotopy theory of fibrations was extended to the fibrewise setting by James \cite{James}, with Morries \cite{Morries-James} and Crabb \cite{C-J}, and other authors.
This article introduces the \textit{sequential version} of parametrized homotopic distance \cite{Calcines-Navnath}, extending the sequential homotopic distance of \cite{Borat-Vergili} to the fibrewise setting and providing a unified framework for various fibrewise homotopy invariants. In particular, we extends the notion of sequential parametrized topological complexity \cite{Farber-Paul1} to the fibrewise context, yielding the sequential analogue of parametrized topological complexity studied in \cite{Calcines-fibrewiseTC}. Later, we generalize these constructions to their relative counterparts. In the relative setting, Mac\'ias-Virgós \emph{et al.} introduced the \emph{subspace (relative) homotopic distance} in \cite{Relative_homotopic_distance}, generalizing both the subspace LS category and the relative topological complexity of Farber \cite{Farber_Book}. The first author introduced the notion of sequential subspace topological complexity in \cite{ND_Group_action} and subsequently Mescher–Stegemeyer formulated a parametrized version of sequential subspace topological complexity in \cite{Mescher-Stegemeyer}. In this work, we introduce the \emph{relative sequential parametrized homotopic distance}, which provides a common framework unifying all these notions.

Here, we introduce the $r$-th sequential parametrized (or fibrewise) homotopic distance of fibrewise maps $f_i \colon X \to Y$ between fibrewise spaces $X$ and $Y$ over $B$ for $i \in \{1, \dots, r\}$, denoted by $D_B(f_1,\dots,f_r)$. This is defined as the smallest integer $n \ge 0$ (or $\infty$ if no such $n$ exists) such that the fibrewise space $X$ admits a cover by $n{+}1$ open sets on which $f_1,\dots,f_r$ are fibrewise homotopic. This notion unifies and generalizes earlier concepts, including the fibrewise unpointed LS category and sequential parametrized topological complexity. In particular, if $X$ is a fibrewise space over $B$ with fibrewise projections $pr_i \colon X^r_B \to X$, then $\TC_{B,r}(X) = D_B(pr_1,\dots,pr_r)$, where $X^r_B$ is the $r$-fold fibred product of $X$ over $B$; and if $X$ is fibrewise pointed with section $s_X$, then $\ct_B^*(X) = D_B(id_X, s_X \circ p_X, \dots, s_X \circ p_X)$. When $B$ is a point, we recover the usual sequential homotopic distance. By relating parametrized homotopic distance to the fibrewise sectional category of \cite{GC}, we establish cohomological lower bounds and connectivity upper bounds, and study its behaviour under compositions, products, and fibre-preserving maps.

The paper is organized as follows.
In Section \ref{sec:prelim-fib-homotopy-theory}, we recall basic notions from fibrewise homotopy theory, including fibrewise (pointed) spaces and fibrewise fibrations and various fibrewise invariants. These concepts serve as essential ingredients for the remainder of the article.
In Section \ref{fibrewise seq parametrized TC}, we introduce the sequential analogue of parametrized topological complexity in the fibrewise setting and establish its fibrewise homotopy invariance (Proposition \ref{fibrewise_homotopy_invariant}). We also extend the classical LS category–topological complexity inequality to the fibrewise pointed setting (Proposition \ref{relation_SFTC_Cat}).  \ref{fibrewise pointed seq TC} compares the pointed and unpointed versions of the sequential fibrewise parametrized topological complexity. In Section \ref{sec:sphd}, which forms the central theme of the article, introduces the sequential analogue of parametrized homotopic distance. This notion unifies several numerical topological invariants within a broader framework. We provide cohomological lower bounds and homotopy-dimension upper bounds for maps over fibrant spaces. Section \ref{properties of seq PHD} establishes key properties of this invariant, including its relation to the fibrewise LS category of maps (Proposition \ref{seq PHD-LS cat}). \ref{composition of maps} examines its behavior under composition of fibrewise maps. We also prove a triangle inequality on normal spaces (Theorem~\ref{sub-additive}) and compare its values on products with those on the individual fibrewise maps (Proposition \ref{D_B_on_product}).
In Section \ref{seq PHD fibrewise fibration}, we study this invariant for fibrewise fibrations. Section \ref{seq Pointed PHD} develops the pointed version for fibrewise pointed maps, and \ref{seq PHD_comparision} compares the pointed and unpointed theories. Finally, Section \ref{Relative seq PHD} introduces a relative (subspace) version of sequential homotopic distance in the fibrewise setting and explore some its properties.
\section{Background on parametrized invariants}\label{sec:prelim-fib-homotopy-theory}
In this section, we recall the notation and standard results from fibrewise homotopy theory and related invariants that will be used throughout the paper. For further details, see \cite{James, C-J, GC}.

\subsection{Fibrewise homotopy theory}
A fibrewise space over \( B \) consists of a pair \( (X, p_X) \), where \( X \) is a topological space and \( p_X \colon X \to B \) is a projection map. When the context is clear, we simply write \( X \) for such a fibrewise space. A fibrewise map \( f \colon X \to Y \) between fibrewise spaces is a continuous map satisfying \( p_Y \circ f = p_X \). 

For fibrewise spaces $X$ and $Y$, their fibrewise product is defined as 
\[
X \times_B Y = \{(x,y)\in X\times Y \mid p_X(x)=p_Y(y)\}.
\]

Let $I=[0,1]$ be the unit interval. The fibrewise cylinder of a fibrewise space $X$ over $B$ is the product $X\times I$ with the projection $X\times I \xrightarrow{pr_1} X \xrightarrow{p_X} B$ and is denoted by $I_B(X)$. This yields the usual notions of fibrewise homotopy $\simeq_B$ between fibrewise maps and fibrewise homotopy equivalence.

For a fibrewise space \(X\) over \(B\), the fibrewise free path space or fibrewise cocylinder \(P_B(X)\) of $X$ is defined by the pullback diagram in the category of topological spaces
\begin{equation}
 \xymatrix{
 P_B(X) \ar[r] \ar[d] & X^I \ar[d]^{p_X^I} \\
 B \ar[r]_c & {B^I,}
 }
\end{equation}
where \(X^I\) and \(B^I\) denote the free path spaces with the compact–open topology, \(p_X^I\) is induced by \(p_X\), and \(c\colon B\to B^I\) assigns to each point its constant path. Thus,
\[
P_B(X)=\{(b,\alpha)\in B\times X^I \mid p_X^I\circ \alpha = c_b\},
\]
equipped with the projection \((b,\alpha)\mapsto b\), where $c_b$ denotes the constant path in $B^I$.

We now recall the notion of a fibrewise fibration. A fibrewise map $p\colon E \to Y$ is called a fibrewise fibration if it satisfies the homotopy lifting property with respect to any fibrewise space. That is, for any commutative diagram in in the category of fibrewise spaces, there exists a lift $I_B(Z) \to E$  making the following diagram commute:  
\[\begin{tikzcd}
	Z && E \\
	{I_B(Z)} && Y.
	\arrow["f", from=1-1, to=1-3]
	\arrow["{i_0}"', hook, from=1-1, to=2-1]
	\arrow["p", from=1-3, to=2-3]
	\arrow[dashed, from=2-1, to=1-3]
	\arrow["H"', from=2-1, to=2-3]
\end{tikzcd}\]

The fibrewise map
\begin{align} \label{fibrewise path space map}
  \Pi_X=(d_0,d_1)\colon P_B(X)\to X\times_B X,  
\end{align}
 defined by \((b,\alpha) \mapsto (\alpha(0),\alpha(1))\),
is always a fibrewise fibration, though not necessarily a Hurewicz fibration. However, when \(p_X\colon X \to B\) is a Hurewicz fibration, one can verify that \(\Pi_X\) is also a Hurewicz fibration.

We now recall the pointed counterparts of the unpointed notions reviewed above. A fibrewise pointed space over $B$ is a triple $(s_X, X, p_X)$, where $(X, p_X)$ is a fibrewise space and $s_X \colon  B \to X$ is a section of $p_X$; we will simply write $X$ when this causes no ambiguity. A fibrewise pointed map $f \colon  X \to Y$ between fibrewise pointed spaces $X$ and $Y$ is a fibrewise map satisfying $f \circ s_X = s_Y$.

A subspace \(A \subseteq X\) containing the section \(s_X(B)\) is a fibrewise pointed space, called a fibrewise pointed subset of \(X\), with the inclusion \(A \hookrightarrow X\) being fibrewise pointed.

For a fibrewise pointed space $X$ over $B$, the fibrewise pointed cylinder is defined as the pushout
\[
\xymatrix{
B \times I \ar[r]^{pr} \ar[d]_{s_X \times id} & B \ar[d] \\
X \times I \ar[r] & I_B^B(X),
}
\]
and as in the unpointed setting, this gives the notion of fibrewise pointed homotopy between fibrewise pointed maps, written $\simeq_B^B$. A fibrewise pointed homotopy $F \colon  I_B^B(X) \to Y$ may equivalently be described as a fibrewise homotopy $F' \colon  I_B(X) \to Y$ satisfying $F'(s_X(b),t) = s_X(b)$ for all $b \in B$ and $t \in I$. The corresponding notion of fibrewise pointed homotopy equivalence is defined in the usual way.

The pointed fibrewise cocylinder of $X$ over $B$ 
 is the fibrewise space $P_B(X)$ together with the section $(id_B, c\circ s_X)\colon  B \to P_B(X)$, denoted by $P_B^B(X),$ which induced by the pullback property

\subsection{Fibrewise sectional category and induced invariants} 
In this subsection, we recall various numerical invariants associated to fibrewise spaces.

We begin by recalling the fibrewise unpointed version of the LS category introduced in \cite{iwase2010topological}.
\begin{definition}\label{fibrewise_unpointed_LS_Cat}
    Let $X$ be a fibrewise pointed space over $B$. The fibrewise unpointed LS category of $X$, denoted $\ct_B^*(X)$, is the least nonnegative integer $n$ such that there exists an open cover $\{U_0, U_1, \dots, U_n\}$ of $X$ with the property that, for each $0 \leq i \leq n$, the following diagram commutes up to fibrewise homotopy:
    \[
    \begin{tikzcd}
    	{U_i} && X \\
    	& {B.}
    	\arrow["{i_{U_i}}", hook, from=1-1, to=1-3]
    	\arrow["{p_X\vert_{U_i}}"', from=1-1, to=2-2]
    	\arrow["{s_X}"', from=2-2, to=1-3]
    \end{tikzcd}
    \]
    If no such integer exists, we set $\ct_B^*(X) = \infty$.
\end{definition}
Iwase and Sakai \cite{iwase2010topological} proved that for fibrewise well-pointed spaces the unpointed and pointed fibrewise LS categories coincide. Moreover, if $B$ is a locally finite simplicial complex, then consider the fibrewise pointed space $d(B)$ with $p_{d(B)} \colon  B \times B \to B$ given by the second projection and the diagonal map as a section $s_{d(B)}$. In this context, they proved that Farber's topological complexity $\mathrm{TC}(B)$ is recovered as the fibrewise LS category of $d(B)$.

As another fibrewise invariant extending the unpointed LS category, García--Calcines introduced a fibrewise analogue of sectional category \cite[Definition~2.1]{GC}. This invariant admits equivalent Whitehead and Ganea type descriptions in the fibrewise setting and measures the minimal complexity required to obtain fibrewise homotopy sections of a given map over the base.
\begin{definition}
The fibrewise sectional category of a fibrewise map $f\colon  E \to X$ over $B$, denoted by $\operatorname{secat}_B(f)$, is the smallest non-negative integer $k$ for which there exists an open cover $\{U_0, \dots, U_k\}$ of $X$ such that for each $U_i$ there is a fibrewise homotopy section $s_i\colon  U_i \to E$, that is,
\[
f \circ s_i \simeq_B i_{U_i},
\]
where $i_{U_i}\colon  U_i \hookrightarrow X$ is the inclusion map. If  no such $k$ exists, then $\operatorname{secat}_B(f)=\infty.$
\end{definition}

The fibrewise pointed cohomology was introduced by  Garcia-Calcines and the first author in \cite{Calcines-Navnath}. We now recall and state the cohomological lower bound on the fibrewise pointed sectional category of a fibrewise map.

Note that there is a split short exact sequence of graded abelian groups
$$
\xymatrix{
{0} \ar[r] & {H^*(B)} \ar[r]^{p_X^*} & {H^*(X)} \ar@/^1pc/[l]^{s_X^*} \ar[r] & {H^*(X) / \langle p_X^*(H^*(B)) \rangle} \ar[r] & {0}
}
$$
inducing an isomorphism
$$
H^*(X)/\langle p_X^*(H^*(B))\rangle \cong \mathrm{ker}(s_X^*).
$$
Moreover, if $H^*(X,s_X(B))$ denotes the cohomology of the pair $(X,s_X(B))$, then the following isomorphism holds:
$$
H^*(X,s_X(B))\cong \mathrm{ker}(s_X^*).
$$

Indeed, let $i\colon s_X(B)\to X$ and $j\colon (X,\emptyset)\to (X,s_X(B))$ denote the inclusions. Consider the following segment of the long exact sequence in cohomology associated with the pair $(X,s_X(B))$:
\[
\begin{tikzcd}
{H^{*}(X,s_X(B))} \arrow[r, "j^*"] & {H^{*}(X)} \arrow[r, "i^*"] & {H^*(s_X(B))} \arrow[r, "\delta^*"] & {H^{*+1}(X,s_X(B))} \arrow[r, "j^*"] & {H^{*+1}(X).}
\end{tikzcd}
\]

Since $i^*$ is surjective in all dimensions, it follows that $\delta^*=0$, and thus $j^*$ is injective. Consequently,
$$
\mathrm{ker}(s_X^*)=\mathrm{ker}(i^*)=\mathrm{im}(j^*)\cong H^*(X,s_X(B)).
$$

The above considerations lead to the following formulation of fibrewise pointed cohomology.

\begin{definition}\label{def:fib-cohomology}
Let $X$ be a fibrewise pointed space over $B$. The fibrewise
pointed cohomology of $X$ (with coefficients in $R$) is defined as
$$
H_B^*(X):=H^*(X,s_X(B)).
$$
\end{definition}

We now state the cohomological lower bound on the fibrewise pointed sectional category. Consider the induced ring homomorphism
$$
\pi^*\colon H_B^*(X)\to H_B^*(E),
$$
arising from the map of pairs $\pi\colon (E,s_E(B))\to (X,s_X(B))$.

\begin{theorem}\label{thm:coho-lb-secatB}
Let $\pi\colon E\to X$ be a fibrewise pointed map. Then
$$
nil(\mathrm{ker}(\pi^*))\leq \sct_B^B(\pi),
$$
where $\sct_B^B(\pi)$ denotes the fibrewise pointed sectional category of the map $\pi$.
\end{theorem}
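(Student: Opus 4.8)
The plan is to transplant the classical Schwarz-type cohomological lower bound for sectional category into fibrewise pointed cohomology. We may assume $\sct_B^B(\pi)=k<\infty$, since otherwise the inequality is vacuous. By the definition of the fibrewise pointed sectional category there is an open cover $\{U_0,\dots,U_k\}$ of $X$ by fibrewise pointed subsets (so $s_X(B)\subseteq U_i$ for each $i$), together with fibrewise pointed homotopy sections $s_i\colon U_i\to E$ of $\pi$, i.e. $\pi\circ s_i\simeq_B^B i_{U_i}$. Since $\pi^*\colon H_B^*(X)\to H_B^*(E)$ is a ring homomorphism, $\ker(\pi^*)$ is an ideal, and the goal is to show that every product of $k{+}1$ of its elements vanishes in $H_B^*(X)=H^*(X,s_X(B))$, which gives exactly $nil(\ker(\pi^*))\le k$.

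The first step is to observe that for each $i$ the restriction homomorphism $i_{U_i}^*\colon H^*(X,s_X(B))\to H^*(U_i,s_X(B))$ annihilates $\ker(\pi^*)$. Indeed, as $s_X(B)\subseteq U_i$ and $s_i$ is fibrewise pointed, $i_{U_i}$ and $s_i$ are genuine maps of pairs $(U_i,s_X(B))\to(X,s_X(B))$ and $(U_i,s_X(B))\to(E,s_E(B))$, respectively, and the fibrewise pointed homotopy $\pi\circ s_i\simeq_B^B i_{U_i}$ yields $i_{U_i}^*=s_i^*\circ\pi^*$ on relative cohomology; hence $i_{U_i}^*$ kills $\ker(\pi^*)$. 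The second step is to lift: from the exact cohomology sequence of the triple $(X,U_i,s_X(B))$, the kernel of the restriction $H^*(X,s_X(B))\to H^*(U_i,s_X(B))$ equals the image of $H^*(X,U_i)\to H^*(X,s_X(B))$, so each $g\in\ker(\pi^*)$ admits a lift $\widetilde g\in H^*(X,U_i)$. Finally, given $g_1,\dots,g_{k+1}\in\ker(\pi^*)$, choose lifts $\widetilde g_j\in H^*(X,U_{j-1})$ and form the relative cup product $\widetilde g_1\smile\cdots\smile\widetilde g_{k+1}\in H^*\bigl(X,\,U_0\cup\cdots\cup U_k\bigr)=H^*(X,X)=0$, using that the $U_i$ cover $X$. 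By naturality of the relative cup product this class maps to $g_1\smile\cdots\smile g_{k+1}$ under $H^*(X,\bigcup_i U_i)\to H^*(X,s_X(B))$, so $g_1\smile\cdots\smile g_{k+1}=0$. As the $g_j$ were arbitrary, $(\ker\pi^*)^{k+1}=0$, i.e. $nil(\ker(\pi^*))\le k=\sct_B^B(\pi)$.

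The argument is mostly formal, so there is no single hard obstacle; the one point that genuinely needs care is the interaction between the fibrewise pointed structure and relative cohomology. Concretely, one must make sure that the cover realising $\sct_B^B(\pi)$ can be taken to consist of fibrewise pointed subsets and that the homotopy sections $s_i$ are fibrewise pointed, so that $i_{U_i}$ and $s_i$ really do induce maps on the relevant relative cohomology groups and the exact sequence of the triple $(X,U_i,s_X(B))$ is at our disposal. Once this compatibility is set up, the remainder is the standard naturality bookkeeping for the relative cup product.
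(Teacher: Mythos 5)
Your proof is correct: it is the standard Schwarz-type relative cup-product argument, correctly adapted to the pointed fibrewise setting (the key compatibility points — that each $U_i$ contains $s_X(B)$, that $\pi\circ s_i\simeq_B^B i_{U_i}$ is a homotopy through maps of pairs, and hence that $i_{U_i}^*$ kills $\ker(\pi^*)$ so one can lift to $H^*(X,U_i)$ — are all handled properly). Note that the paper itself gives no proof of this theorem; it is recalled from the cited reference \cite{Calcines-Navnath}, so there is nothing to compare against beyond observing that your argument is exactly the expected one.
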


We next turn to another fibrewise invariant that plays a central role in parametrized motion planning. The parametrized topological complexity \cite{C-F-W} was later extended  to arbitrary fibrewise spaces \cite[Definition~8]{Calcines-fibrewiseTC} using the formulation adopted here. We now state the precise definition of fibrewise topological complexity for a fibrewise space over $B$.

\begin{definition}
The fibrewise topological complexity of a fibrewise space \((X,p_X)\) over \(B\), denoted \(\TC_B(X)\), is the fibrewise sectional category of the fibrewise fibration \(\Pi_X \colon  P_B(X) \to X \times_B X\) introduced in (\ref{fibrewise path space map}). Thus,
\[
\TC_B(X) := \sct_B(\Pi_X).
\]
\end{definition}

\section{Formal aspects of sequential parametrized topological complexity} \label{fibrewise seq parametrized TC}

García–Calcines introduced the notion of parametrized topological complexity for fibrewise spaces in \cite{Calcines-fibrewiseTC}. In this section, we develop its sequential analogue. This new invariant incorporates both the original fibrewise parametrized topological complexity and the sequential parametrized topological complexity. We also establish several basic formal properties in this setting and conclude by presenting the pointed variant.

\subsection{Sequential fibrewise topological complexity}

To introduce the sequential parametrized topological complexity, we begin by recalling the standard fibrewise fibration obtained by evaluating fibrewise paths at prescribed times. \\
For a fibrewise space $E$ over $B$, there is a fibrewise fibration
\[
\Pi_{r,E} \colon P_B(E) \longrightarrow E_B^{\,r},
\]
defined on each fibre by sending a path $\gamma$ to the $r$-tuple of its evaluations:
\begin{equation}\label{evaluation}
\Pi_{r,E}(b,\gamma)
= \bigl(\gamma(0),\, \gamma(\frac{1}{r-1}),\,\dots,\,\gamma(1)\bigr),
\qquad (b,\gamma)\in P_B(E).
\end{equation}

\begin{definition}\label{SPTC}
The sequential parametrized topological complexity of a fibrewise space $E$ over $B$ is defined by
\[
\TC_{B,r}(E) := \sct_B(\Pi_{r,E}).
\]
\end{definition}

    A fibrewise space $E$ is fibrant when the projection $p_E \colon E \to B$ is a Hurewicz fibration. Under this assumption, both $P_B(E)$ and $E_B^{\,r}$ are fibrant. By \cite[Theorem~2.10]{GC}, it follows that $\TC_{B,r}(E)=\sct(\Pi_{r,E})$.\\
    Moreover, the fibrewise map $c\colon  E\to P_B(E)$ given by $c(e)= (P_E(e),c_e)$ is a fibrewise homotopy equivalence $($see \cite[Remark 10]{Calcines-fibrewiseTC}$)$. Using the fact, we obtain the following commutative diagram: 
    \[\begin{tikzcd}
	E && {E_B^r} \\
	& {P_B(E).}
	\arrow["{\Delta_E^r}", from=1-1, to=1-3]
	\arrow["c"', "\simeq_B",from=1-1, to=2-2]
	\arrow["{\Pi_{r,E}}"', from=2-2, to=1-3]
\end{tikzcd}\] 
Therefore, $\TC_{B,r}(E)=\sct_B(\Delta_E^r)$ by \cite[Proposition 5]{Calcines-fibrewiseTC}, where $\Delta_E^r$ is the diagonal map.
Now, we show that $\TC_{B,r}(-)$ is a fibrewise homotopy invariant.
\begin{proposition}\label{fibrewise_homotopy_invariant}
    Let $E$ and $\widetilde{E}$ be two fibrewise homotopy equivalent spaces over $B$. Then \[\TC_{B,r}(E)=\TC_{B,r}(\widetilde{E}).\]
\end{proposition}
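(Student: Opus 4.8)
The plan is to reduce the statement to the fact, recorded just before the proposition, that $\TC_{B,r}(E)=\sct_B(\Delta_E^r)$, together with the behaviour of fibrewise sectional category under composition with fibrewise homotopy equivalences \cite[Proposition 5]{Calcines-fibrewiseTC}. So let $\varphi\colon E\to\widetilde E$ be a fibrewise homotopy equivalence with fibrewise homotopy inverse $\psi\colon\widetilde E\to E$. First I would observe that $\varphi$ induces a fibrewise map $\varphi^r_B\colon E^r_B\to\widetilde E^r_B$ on the $r$-fold fibred products, defined fibrewise by $(e_1,\dots,e_r)\mapsto(\varphi(e_1),\dots,\varphi(e_r))$; this is well-defined because $p_{\widetilde E}\circ\varphi=p_E$ guarantees that the tuple lands in the fibred product, and it is again a fibrewise homotopy equivalence with inverse $\psi^r_B$, since a fibrewise homotopy $\varphi\psi\simeq_B\mathrm{id}$ applied coordinatewise yields a fibrewise homotopy $\varphi^r_B\psi^r_B\simeq_B\mathrm{id}$ (and symmetrically).

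The key step is then the commutativity, up to fibrewise homotopy, of the square relating the two diagonal maps: $\varphi^r_B\circ\Delta_E^r=\Delta_{\widetilde E}^r\circ\varphi$. In fact this holds on the nose, since both sides send $e$ to $(\varphi(e),\dots,\varphi(e))$. Now apply \cite[Proposition 5]{Calcines-fibrewiseTC} twice. Since $\varphi^r_B$ is a fibrewise homotopy equivalence and $\Delta_{\widetilde E}^r\circ\varphi=\varphi^r_B\circ\Delta_E^r$, we get $\sct_B(\Delta_{\widetilde E}^r\circ\varphi)=\sct_B(\Delta_E^r)$; and since $\varphi$ is a fibrewise homotopy equivalence, composing with it on the right does not change the fibrewise sectional category either, so $\sct_B(\Delta_{\widetilde E}^r\circ\varphi)=\sct_B(\Delta_{\widetilde E}^r)$. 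Chaining these gives $\sct_B(\Delta_E^r)=\sct_B(\Delta_{\widetilde E}^r)$, i.e. $\TC_{B,r}(E)=\TC_{B,r}(\widetilde E)$.

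The main thing to be careful about is the precise invariance statement for $\sct_B$ under pre- and post-composition with fibrewise homotopy equivalences: one needs that $\sct_B(g\circ h)=\sct_B(g)$ when $h$ is a fibrewise homotopy equivalence and $\sct_B(k\circ g)=\sct_B(g)$ when $k$ is a fibrewise homotopy equivalence. Both are standard and are exactly what \cite[Proposition 5]{Calcines-fibrewiseTC} provides (open covers with fibrewise homotopy sections pull back along fibrewise homotopy equivalences), so no genuine obstacle arises; the only real content is verifying that $\varphi^r_B$ is again a fibrewise homotopy equivalence, which is the coordinatewise argument sketched above. Alternatively, one could run the same argument directly with the evaluation fibrations $\Pi_{r,E}$ and $\Pi_{r,\widetilde E}$ using that $c\colon E\to P_B(E)$ is a fibrewise homotopy equivalence, but going through the diagonal maps is cleaner.
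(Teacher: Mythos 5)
Your proof is correct and follows essentially the same route as the paper: both reduce the claim to \cite[Proposition 5]{Calcines-fibrewiseTC} applied to a commutative square whose horizontal arrows are fibrewise homotopy equivalences. The only cosmetic difference is that you work with the diagonal model $\TC_{B,r}(E)=\sct_B(\Delta_E^r)$, whereas the paper applies the same lemma directly to the square comparing $\Pi_{r,E}$ and $\Pi_{r,\widetilde E}$ via the induced equivalences $P_B(f)$ and $f_B^r$ --- precisely the alternative you mention at the end.
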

\begin{proof}
    Let $f\colon  E\to \widetilde{E}$ be fibrewise homotopy equivalence. This induces a fibrewise homotopy equivalence $P_B(f)\colon  P_B(E)\to P_B(\widetilde{E})$. Let  $f^r_B$ be the $r$-times fibred product of the map $f$. Thus, we obtain the induced fibrewise homotopy equivalence $f^r_B\colon  E_B^r\to \widetilde{E}_B^r$.
    Then the result follows by applying \cite[Proposition 5]{Calcines-fibrewiseTC} to the following commutative diagram 
    \[\begin{tikzcd}
	{P_B(E)} && {P_B(\widetilde{E})} \\
	{E_B^r} && {\widetilde{E}_B^r.}
	\arrow["{P_B(f)}", "\simeq_B"',from=1-1, to=1-3]
	\arrow["{\Pi_{r,E}}"', from=1-1, to=2-1]
	\arrow["{\Pi_{r,\widetilde{E}}}", from=1-3, to=2-3]
	\arrow["{f^r_B}"', "\simeq_B",from=2-1, to=2-3]
\end{tikzcd}\]
\end{proof}
If $E$ is a fibrewise pointed space, we obtain the following result.
\begin{corollary}\label{contractibility}
    Let $E$ be a fibrewise pointed space over $B$. Then $\TC_{B,r}(E)=0$ if and only if $E$ is fibrewise contractible. 
\end{corollary}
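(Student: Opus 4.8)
The plan is to prove the biconditional $\TC_{B,r}(E)=0 \iff E$ is fibrewise contractible by exploiting the identification $\TC_{B,r}(E)=\sct_B(\Delta_E^r)$ established just before the statement, together with the fibrewise homotopy invariance of $\TC_{B,r}$ from Proposition~\ref{fibrewise_homotopy_invariant}. Recall that $\sct_B$ of a fibrewise map is $0$ exactly when the map admits a global fibrewise homotopy section (the open cover consists of a single set, namely $E_B^r$ itself).

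For the forward direction, suppose $\TC_{B,r}(E)=0$, i.e.\ $\sct_B(\Delta_E^r)=0$. Then there is a fibrewise map $s\colon E_B^r \to E$ with $\Delta_E^r \circ s \simeq_B \mathrm{id}_{E_B^r}$. The idea is to feed in the ``constant-in-the-last-coordinates'' slices: for a fibrewise pointed space we have the section $s_E\colon B\to E$, and restricting $s$ along the fibrewise map $E \to E_B^r$ given fibrewise by $e \mapsto (e, s_E(p_E(e)),\dots, s_E(p_E(e)))$ should produce, after composing the homotopy $\Delta_E^r\circ s\simeq_B \mathrm{id}$ with the projection $pr_1\colon E_B^r\to E$, a fibrewise homotopy from $\mathrm{id}_E$ to $s_E\circ p_E$ — precisely the statement that $E$ is fibrewise contractible (i.e.\ $\ct_B^*(E)=0$, equivalently $\mathrm{id}_E \simeq_B s_E\circ p_E$). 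I should be a little careful that the homotopy I extract genuinely starts at $\mathrm{id}_E$ and ends at the constant section; tracking which evaluation coordinate of $\Pi_{r,E}$ corresponds to which projection is the bookkeeping one must get right here.

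For the converse, if $E$ is fibrewise contractible then $E$ is fibrewise homotopy equivalent to the trivial fibrewise pointed space $B$ (with $p_B=\mathrm{id}_B$, $s_B=\mathrm{id}_B$). By Proposition~\ref{fibrewise_homotopy_invariant}, $\TC_{B,r}(E)=\TC_{B,r}(B)$, so it suffices to check $\TC_{B,r}(B)=0$. But for $E=B$ we have $B_B^r \cong B$, $P_B(B)\cong B$, and $\Pi_{r,B}=\mathrm{id}_B$, which trivially admits a section; hence $\sct_B(\mathrm{id}_B)=0$. Alternatively, without invoking invariance, one can directly build a single fibrewise homotopy section of $\Delta_E^r$ from a fibrewise contraction $H\colon I_B(E)\to E$ between $\mathrm{id}_E$ and $s_E\circ p_E$, by sending $(x_1,\dots,x_r)\in E_B^r$ to the concatenation of paths running $x_i \rightsquigarrow s_E(b) \rightsquigarrow x_{i+1}$ through the contraction and reparametrizing so the $i$-th evaluation point is $x_i$; this yields a section of $\Pi_{r,E}$ directly.

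The main obstacle I anticipate is the forward direction's coordinate bookkeeping: extracting from a section of $\Delta_E^r$ (or $\Pi_{r,E}$) a homotopy whose \emph{endpoints} are exactly $\mathrm{id}_E$ and $s_E\circ p_E$ requires choosing the right slice $E\hookrightarrow E_B^r$ and the right coordinate projection, and verifying the resulting fibrewise homotopy respects $p_E$ throughout (which it does, since all maps in sight are fibrewise). The converse is essentially formal given Proposition~\ref{fibrewise_homotopy_invariant}. I would present the forward direction via the slice-and-project argument and the converse via the homotopy-invariance reduction to $E=B$, keeping the explicit path-concatenation construction in reserve as the self-contained alternative.
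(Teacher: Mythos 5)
Your proposal is correct, and the easy direction (fibrewise contractible $\Rightarrow \TC_{B,r}(E)=0$) is exactly the paper's argument: reduce to $E=B$ via Proposition~\ref{fibrewise_homotopy_invariant}. The other direction, however, takes a genuinely different route. The paper starts from a section $\sigma$ of $\Pi_{r,E}\colon P_B(E)\to E_B^r$, writes it explicitly as $\sigma(x_1,\dots,x_r)=(p_E(x_1),\bar\sigma(x_1,\dots,x_r))$, and uses the paths $\bar\sigma(\alpha(st),s_E(b),\dots,s_E(b))(1-t)$ to build an explicit fibrewise contraction of $P_B(E)$ onto $B$, then transfers this to $E$ via $E\simeq_B P_B(E)$. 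You instead work with $\sct_B(\Delta_E^r)=0$ and argue formally: given $s\colon E_B^r\to E$ with a fibrewise homotopy $G\colon \Delta_E^r\circ s\simeq_B \mathrm{id}_{E_B^r}$, restrict $G$ along the slice $j(e)=(e,s_Ep_E(e),\dots,s_Ep_E(e))$ and note that $pr_1\circ G\circ(j\times \mathrm{id}_I)$ and $pr_2\circ G\circ(j\times \mathrm{id}_I)$ are fibrewise homotopies from the common map $s\circ j$ to $\mathrm{id}_E$ and to $s_E\circ p_E$ respectively (since $pr_i\circ\Delta_E^r=\mathrm{id}_E$); concatenating gives $\mathrm{id}_E\simeq_B s_E\circ p_E$. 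The coordinate bookkeeping you flag as a worry does resolve exactly this way (it needs $r\ge 2$, which is implicit in the sequential setting), and every map in sight is fibrewise, so the concatenated homotopy is over $B$. Your route buys a cleaner, formula-free argument that never mentions $P_B(E)$; the paper's route produces an explicit contraction of the path space, which is more concrete but requires the reader to check the displayed formula actually respects the fibre condition. Both are valid proofs.
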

\begin{proof}
    Suppose $E\simeq_B B$. Then by Proposition \ref{fibrewise_homotopy_invariant}, we get $$\TC_{B,r}(E)=\TC_{B,r}(B)=\sct_B(Id_B)=0.$$

Suppose $\TC_{B,r}(E) = 0$. Then there exists a fibrewise map $\sigma \colon  E_B^r \to P_B(E)$ such that the following diagram commutes up to fibrewise homotopy:
\[
\begin{tikzcd}
E_B^r \arrow[rr, "{Id_{E_B^r}}"] \arrow[dr, "\sigma"'] & & E_B^r \\
& {P_B(E).} \arrow[ur, "{\Pi_{r,E}}"'] &
\end{tikzcd}
\]
Here, the map $\sigma \colon E_B^r \to P_B(E)$ is defined by 
\[
\sigma(x_1, \dots, x_r) = \big(p_E(x_1), \bar{\sigma}(x_1, \dots, x_r)\big),
\]
where $\bar{\sigma}(x_1, \dots, x_r)$ is a path in $E$ satisfying 
\[
\bar{\sigma}(x_1, \dots, x_r)\!\left(\frac{i}{r-1}\right) = x_{i+1}, \quad
p_E \circ \bar{\sigma}(x_1, \dots, x_r) = c_{p_E(x_1)}.
\]

Next, note that the fibrewise map $p = p_{P_B(E)} \colon P_B(E) \to B$ admits a fibrewise homotopy inverse $q \colon B \to P_B(E)$, defined by 
$q(b) = (b, c_{s_E(b)}),$
where $s_E \colon B \to E$ is the section of $p_E$. Indeed, $p \circ q = Id_B$, and $q \circ p \simeq_B Id_{P_B(E)}$ through the fibrewise homotopy $$H \colon I_B(P_B(E)) \rightarrow P_B(E),$$
defined by
$H (b, \alpha, t) = (b, \overline{H}(b, \alpha, t)),$
where $$\overline{H}(b, \alpha, t)(s) := \bar{\sigma}\big(\alpha(st), \underbrace{s_E(b), \dots, s_E(b)}_{(r-1)\text{-times}}\big)(1-t).$$
Hence $P_B(E) \simeq_B B$. Since $E \simeq_B P_B(E)$ by \cite[Remark 10]{Calcines-fibrewiseTC}, the result follows.
\end{proof}


For a fibrewise pointed space \(E\) over \(B\), one has  
\[
\ct_B^*(E) \leq \TC_{B}(E) \leq \ct_B^*(E \times_B E)
\]
(see \cite[Proposition 13]{Calcines-fibrewiseTC}). We now establish the sequential analogue of this inequality.

\begin{proposition}\label{relation_SFTC_Cat}
    Let $E$ be a fibrewise pointed space over $B$. Then 
    \[\ct_B^*(E_B^{r-1})\leq \TC_{B,r}(E)\leq \ct_B^*(E_B^r).\]
\end{proposition}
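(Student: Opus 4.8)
The plan is to run both inequalities off the reformulation $\TC_{B,r}(E)=\sct_B(\Delta_E^r)$ established just above (obtained from the fibrewise homotopy equivalence $c\colon E\to P_B(E)$ and \cite[Proposition~5]{Calcines-fibrewiseTC}), where $\Delta_E^r\colon E\to E_B^r$ is the diagonal and $E_B^r$ is made fibrewise pointed by the diagonal section $b\mapsto(s_E(b),\dots,s_E(b))$, which I write $s_{E_B^r}$; likewise $s_{E_B^{r-1}}$ on $E_B^{r-1}$. After this reduction everything is a manipulation of open covers and fibrewise homotopies.

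For the upper bound, take an optimal cover $\{U_0,\dots,U_n\}$ of $E_B^r$ realizing $\ct_B^*(E_B^r)=n$, so $i_{U_i}\simeq_B s_{E_B^r}\circ p_{E_B^r}|_{U_i}$ for each $i$. The point is that $s_{E_B^r}\circ p_{E_B^r}$ factors through the diagonal, because $\Delta_E^r(s_E(b))=s_{E_B^r}(b)$. Hence $\sigma_i:=s_E\circ p_{E_B^r}|_{U_i}\colon U_i\to E$ is a fibrewise map with $\Delta_E^r\circ\sigma_i=s_{E_B^r}\circ p_{E_B^r}|_{U_i}\simeq_B i_{U_i}$, i.e.\ a fibrewise homotopy section of $\Delta_E^r$ over $U_i$. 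Therefore $\TC_{B,r}(E)=\sct_B(\Delta_E^r)\le n=\ct_B^*(E_B^r)$.

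For the lower bound, take an optimal cover $\{U_0,\dots,U_n\}$ of $E_B^r$ realizing $\TC_{B,r}(E)=\sct_B(\Delta_E^r)=n$, with fibrewise homotopy sections $s_i\colon U_i\to E$, $\Delta_E^r\circ s_i\simeq_B i_{U_i}$. Introduce the fibrewise pointed map $\iota\colon E_B^{r-1}\to E_B^r$ inserting the section value in the first slot, $(x_2,\dots,x_r)\mapsto(s_E(b),x_2,\dots,x_r)$, and the coordinate projections $\mathrm{pr}_1\colon E_B^r\to E$ and $\mathrm{pr}_{\geq2}\colon E_B^r\to E_B^{r-1}$; note $\mathrm{pr}_1\circ\Delta_E^r=\mathrm{id}_E$, $\mathrm{pr}_{\geq2}\circ\Delta_E^r=\Delta_E^{r-1}$, $\mathrm{pr}_1\circ\iota=s_E\circ p_{E_B^{r-1}}$, $\mathrm{pr}_{\geq2}\circ\iota=\mathrm{id}$. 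Put $V_i:=\iota^{-1}(U_i)$, an open cover of $E_B^{r-1}$. Precomposing $\Delta_E^r\circ s_i\simeq_B i_{U_i}$ with $\iota|_{V_i}$ gives $\Delta_E^r\circ s_i\circ\iota|_{V_i}\simeq_B\iota|_{V_i}$ in $E_B^r$; postcomposing with $\mathrm{pr}_1$ yields $s_i\circ\iota|_{V_i}\simeq_B s_E\circ p_{E_B^{r-1}}|_{V_i}$, and postcomposing with $\mathrm{pr}_{\geq2}$ yields $\Delta_E^{r-1}\circ s_i\circ\iota|_{V_i}\simeq_B i_{V_i}$. Applying $\Delta_E^{r-1}$ to the first relation and chaining with the second gives $i_{V_i}\simeq_B\Delta_E^{r-1}\circ s_E\circ p_{E_B^{r-1}}|_{V_i}=s_{E_B^{r-1}}\circ p_{E_B^{r-1}}|_{V_i}$, so $\{V_0,\dots,V_n\}$ witnesses $\ct_B^*(E_B^{r-1})\le n=\TC_{B,r}(E)$. (As an alternative, one can pull the fibrewise fibration $\Pi_{r,E}$ back along $\iota$ to the fibrewise space of paths in $E$ starting on $s_E$, which is fibrewise contractible; since $\Pi_{r,E}$ is a fibrewise fibration, pullback does not raise fibrewise sectional category, and one recovers the same bound.)

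The reasoning is essentially formal, so the things to watch are: (i) checking that $\iota$, the $\sigma_i$ and the $V_i$ are genuinely fibrewise and open, and that pre- and post-composing a fibrewise homotopy with a fibrewise map again yields $\simeq_B$ — routine; and (ii) consistently using the diagonal sections on $E_B^{r-1}$ and $E_B^r$, since $\ct_B^*$ depends on the chosen section. The one genuinely non-obvious step is the two-projection trick in the lower bound: $\mathrm{pr}_1$ is what makes $s_i\circ\iota|_{V_i}$ compressible into the section, while $\mathrm{pr}_{\geq2}$ is what produces the required compression of $i_{V_i}$, and the two must be combined in the right order. Specializing $B$ to a point recovers the classical chain $\ct(E^{r-1})\le\TC_r(E)\le\ct(E^r)$.
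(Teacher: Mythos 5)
Your argument is correct. The upper bound is the same as the paper's (the paper simply cites \cite[Proposition 2.3]{GC}, whose content is exactly your observation that $s_{E_B^r}\circ p_{E_B^r}$ lifts through $\Delta_E^r$ via $s_E\circ p_{E_B^r}$). For the lower bound, both proofs rest on the same core idea — pull back an optimal cover of $E_B^r$ along the map $E_B^{r-1}\to E_B^r$ that inserts the section value into one slot — but the executions differ. The paper works directly with a fibrewise homotopy section $s$ of $\Pi_{r,E}\colon P_B(E)\to E_B^r$, extracts the underlying path $\bar{s}$, and hand-builds the contracting homotopy on $V\subseteq E_B^{r-1}$ by evaluating $\bar s$ at reparametrized times $\tfrac{r-1-i}{r-1}t+\tfrac{i}{r-1}$; this is concrete but notation-heavy and requires checking the endpoint conditions coordinate by coordinate. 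You instead stay entirely in the diagonal model $\sct_B(\Delta_E^r)$ and obtain the compression of $i_{V_i}$ formally, by post-composing the single homotopy $\Delta_E^r\circ s_i\circ\iota|_{V_i}\simeq_B\iota|_{V_i}$ with the two projections $\mathrm{pr}_1$ and $\mathrm{pr}_{\geq2}$ and chaining the results. This avoids all explicit path manipulation, makes the role of the identities $\mathrm{pr}_1\circ\Delta_E^r=\mathrm{id}$ and $\mathrm{pr}_{\geq2}\circ\Delta_E^r=\Delta_E^{r-1}$ transparent, and is arguably cleaner; the only price is that it relies on the equivalence $\TC_{B,r}(E)=\sct_B(\Delta_E^r)$, which the paper has in any case already established via \cite[Proposition 5]{Calcines-fibrewiseTC} just before this proposition. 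The parenthetical alternative (pulling $\Pi_{r,E}$ back along $\iota$ to a fibrewise contractible path space) would need an extra lemma identifying the fibrewise sectional category of a fibrewise fibration with fibrewise contractible total space with $\ct_B^*$ of its base, so it is best left as the aside you made it.
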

\begin{proof}
    The inequality $\TC_{B,r}(E)\leq \ct_B^*(E_B^r)$ follows directly from the Definition~\ref{SPTC} and \cite[Proposition 2.3]{GC}.

    To prove the other inequality, we consider $U\subseteq E_B^r$ an open subset with a fibrewise homotopy section $s\colon U\to P_B(E)$ of $\Pi_{r,E}\colon P_B(E)\to E_B^r$. Then, $s$ has the expression \[s(x_1,\dots,x_r)=\left(p_E(x_1),\bar{s}(x_1,\dots,x_r)\right),\]
    where the map $\bar{s} \colon  U \to E^I$ satisfies $\bar{s}(x_1, \dots, x_r)\!\left(\tfrac{i}{r-1}\right) = x_{i+1} \text{ for } i = 0, \dots, r-1,$ and $p_E \circ \bar{s}(x_1, \dots, x_r) = c_{p_E(x_1)}\text{ for all } (x_1, \dots, x_r) \in U$. Now, we consider an open subset \[V:=\{(x_1,\dots,x_{r-1})\mid \left(x_1,(s_E\circ p_E)(x_1),\dots,(s_E\circ p_E)(x_{r-1})  \right)\in U\}\] of $E_B^{r-1}$ and define the fibrewise homotopy $H\colon I_B(V)\to E_B^{r-1}$ by 
    \[H(x_1,\dots,x_{r-1},t)= \left(\kappa(t), \dots,\kappa(\frac{r-1-i}{r-1}t+\frac{i}{r-1}), \dots,\kappa(\frac{1}{r-1}t+ \frac{r-2}{r-1})\right),\] where $\kappa(t)=\bar{s}(x_1,\dots,x_{r-1},(s_E\circ p_E)(x_1))(t)$.
    Thus, $H(x_1,\dots,x_{r-1}, 0)= (x_1, \dots, x_{r-1})$ and $H(x_1,\dots,x_{r-1}, 1)= ((s_E\circ p_E)(x_1), \dots, (s_E\circ p_E)(x_1))$.
This implies that $V$ is fibrewise categorical. Now the result follows using this argument with open cover.
\end{proof}

We now consider a continuous map $\lambda\colon B'\to B $ and a fibrewise space $E$ over $B$. Then \(\lambda\) determines a fibrewise space \(E'\) over \(B'\), fitting into the pullback diagram below:
\[\begin{tikzcd}
	{E'} & E \\
	{B'} & {B.}
	\arrow[from=1-1, to=1-2]
	\arrow["{p_{E'}}"', from=1-1, to=2-1]
	\arrow["{p_E}", from=1-2, to=2-2]
	\arrow["\lambda"', from=2-1, to=2-2]
\end{tikzcd}\]
Thus, 
\[
E' = \{(b', e) \in B' \times E \mid \lambda(b') = p_E(e)\}~~\text{and}~~ p_{E'}(b', e) = b'.
\]

Moreover, any fibrewise map \(f\colon  E \to X\) over \(B\) induces a fibrewise map \(f'\colon E' \to X'\) over \(B'\),  defined by \(f'(b', e) = (b', f(e))\), where $X'=\{(b', x)\in B'\times X\mid \lambda(b')= p_X(x) \}$. 
Now, using \cite[Proposition 14]{Calcines-fibrewiseTC}, we obtain the following proposition.
\begin{proposition}
    Let $\lambda\colon B'\to B$ be a map and $E$ be a fibrewise space over $B$. Then \[\TC_{B,r}(\lambda^*(E))\leq \TC_{B,r}(E),\] where $\lambda^*(E)$ is the fibrewise pullback of $E$.
\end{proposition}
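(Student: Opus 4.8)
The plan is to reduce the statement to the already-established monotonicity of fibrewise sectional category under pullback, namely \cite[Proposition 14]{Calcines-fibrewiseTC}, applied to the evaluation fibration $\Pi_{r,E}$. Recall that by Definition~\ref{SPTC} we have $\TC_{B,r}(E) = \sct_B(\Pi_{r,E})$, where $\Pi_{r,E}\colon P_B(E)\to E_B^r$ is the fibrewise fibration of \eqref{evaluation}, and similarly $\TC_{B',r}(\lambda^*(E)) = \sct_{B'}(\Pi_{r,E'})$ where $E' = \lambda^*(E)$ is the fibrewise space over $B'$ described in the excerpt.

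The key step is to identify the fibrewise fibration $\Pi_{r,E'}\colon P_{B'}(E') \to (E')_B'^{\,r}$ with the pullback of $\Pi_{r,E}$ along an appropriate map. First I would check that the base change $\lambda$ induces a natural map $(E')_{B'}^{\,r} \to E_B^{\,r}$ sending $((b',e_1),\dots,(b',e_r))$ to $(e_1,\dots,e_r)$ — this is well defined since $p_E(e_i) = \lambda(b')$ for all $i$ — and, in fact, that $(E')_{B'}^{\,r}$ is precisely the fibrewise pullback of $E_B^{\,r}$ along $\lambda$, i.e. $(E')_{B'}^{\,r} = \lambda^*(E_B^{\,r})$. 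Next I would verify the corresponding statement for the path space, that $P_{B'}(E') = \lambda^*(P_B(E))$; this follows from the pullback description $P_{B'}(E') = \{(b',\alpha)\in B'\times (E')^I \mid p_{E'}^I\circ\alpha = c_{b'}\}$ together with the fact that a path in $E'$ over the constant path $c_{b'}$ is the same as a pair consisting of $b'$ and a path in $E$ over the constant path $c_{\lambda(b')}$. Once both identifications are in place, one checks that the square relating $\Pi_{r,E'}$, $\Pi_{r,E}$ and the base-change maps on $P_{B'}(E')\to P_B(E)$ and $(E')_{B'}^{\,r}\to E_B^{\,r}$ is a pullback square of fibrewise spaces; this is essentially formal from the universal property, since $\Pi_{r,E'}$ is defined fibrewise by exactly the same evaluation formula as $\Pi_{r,E}$ and the base change does not affect the path coordinate.

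With the pullback square established, the conclusion is immediate: \cite[Proposition 14]{Calcines-fibrewiseTC} gives $\sct_{B'}(\Pi_{r,E'}) \leq \sct_B(\Pi_{r,E})$, which translates to the desired inequality $\TC_{B',r}(\lambda^*(E)) \leq \TC_{B,r}(E)$. I would present this cleanly by writing the two-dimensional diagram
\[\begin{tikzcd}
	{P_{B'}(E')} & {P_B(E)} \\
	{(E')_{B'}^{\,r}} & {E_B^{\,r}}
	\arrow[from=1-1, to=1-2]
	\arrow["{\Pi_{r,E'}}"', from=1-1, to=2-1]
	\arrow["{\Pi_{r,E}}", from=1-2, to=2-2]
	\arrow[from=2-1, to=2-2]
\end{tikzcd}\]
and noting it is a pullback in the category of topological spaces (equivalently, of fibrewise spaces over the evident bases), then invoking the cited monotonicity.

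The main obstacle I anticipate is purely bookkeeping: carefully checking that the two pullback identifications $P_{B'}(E') \cong \lambda^*(P_B(E))$ and $(E')_{B'}^{\,r} \cong \lambda^*(E_B^{\,r})$ are compatible with the evaluation maps, so that the outer square really is a pullback and not merely a commutative square. This amounts to chasing the defining formulas through the universal properties of the various pullbacks, and while none of it is deep, it is the step where an error could creep in. There is no genuine topological difficulty here, since $\Pi_{r,E}$ is a fibrewise fibration by construction and \cite[Proposition 14]{Calcines-fibrewiseTC} is stated in exactly the generality we need; the whole argument runs parallel to the non-sequential case $r=2$ treated in \cite{Calcines-fibrewiseTC}.
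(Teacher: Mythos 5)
Your proposal is correct and follows essentially the same route as the paper, which likewise deduces the inequality directly from \cite[Proposition 14]{Calcines-fibrewiseTC} after identifying $\Pi_{r,\lambda^*(E)}$ as the base-change of $\Pi_{r,E}$; the paper simply states this without writing out the pullback identifications you verify. Your extra care in checking $P_{B'}(E')\cong\lambda^*(P_B(E))$ and $(E')_{B'}^{\,r}\cong\lambda^*(E_B^{\,r})$ is exactly the bookkeeping the paper leaves implicit.
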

\begin{corollary}
    Let $E$ be a fibrewise space over $B$. If $B'\subseteq B$ and $E'=(p_E)^{-1}(B')$, then \[\TC_{B',r}(E')\leq \TC_{B,r}(E).\]
\end{corollary}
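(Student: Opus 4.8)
The plan is to deduce this from the immediately preceding proposition by exhibiting $E'$ as a fibrewise pullback of $E$ along a suitable map. Concretely, let $\lambda\colon B'\hookrightarrow B$ be the inclusion map, and consider $\lambda^*(E)=\{(b',e)\in B'\times E\mid \lambda(b')=p_E(e)\}$ with projection $p_{\lambda^*(E)}(b',e)=b'$. The preceding proposition then gives $\TC_{B',r}(\lambda^*(E))\le \TC_{B,r}(E)$, so it suffices to identify $\lambda^*(E)$ with $E'=(p_E)^{-1}(B')$ as fibrewise spaces over $B'$.

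The key step is to produce an explicit fibrewise homeomorphism $\varphi\colon E'\to \lambda^*(E)$. Since $\lambda$ is an inclusion, the condition $\lambda(b')=p_E(e)$ simply says $p_E(e)=b'$, so the assignment $e\mapsto (p_E(e),e)$ is a well-defined continuous map $E'\to \lambda^*(E)$, with continuous inverse $(b',e)\mapsto e$ given by restricting the second projection $B'\times E\to E$. One checks immediately that $p_{\lambda^*(E)}\circ\varphi = p_E|_{E'}=p_{E'}$, so $\varphi$ is a fibrewise map over $B'$, and likewise its inverse is fibrewise; hence $\varphi$ is a fibrewise homeomorphism. Since $\TC_{B',r}$ is a fibrewise homotopy invariant by Proposition~\ref{fibrewise_homotopy_invariant} (and in particular invariant under fibrewise homeomorphism), we conclude $\TC_{B',r}(E')=\TC_{B',r}(\lambda^*(E))$.

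Combining the two displays yields $\TC_{B',r}(E')=\TC_{B',r}(\lambda^*(E))\le \TC_{B,r}(E)$, as required. I do not anticipate any real obstacle here: the content is entirely the observation that restricting the base of a fibrewise space is a special case of pulling back along a map, namely along the inclusion of the subspace, so the corollary is a direct specialization of the general pullback inequality. The only point that needs a sentence of care is the identification $\lambda^*(E)\cong E'$, which is routine once one writes out the defining sets.
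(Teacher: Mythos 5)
Your proposal is correct and follows exactly the paper's argument: identify $E'=(p_E)^{-1}(B')$ with the pullback of $p_E$ along the inclusion $B'\hookrightarrow B$ and invoke the preceding proposition. The explicit fibrewise homeomorphism you write down is just a more detailed version of the identification the paper states in one line.
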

\begin{proof}
We note that the space \(E'\) is the pullback of \(p_E\) along the inclusion \(B' \hookrightarrow B\). The statement then follows from the preceding proposition.
\end{proof}

We establish a product inequality for sequential parametrized topological complexity. We first recall the following consequence of \cite[Proposition 3.11]{GCrelsecat}.
\begin{proposition}\label{Bound of section category of product}
    Let $f: E\to X$ and $f':E'\to X'$ be fibrewise maps over $B$. Moreover assume that both $X$ and $X'$ are normal spaces. Then 
    \[\text{ max }\{\sct_B(f),~\sct_{B}(f')\} \leq \sct_{B}(f\times f' )\leq \sct_B(f)+\sct_{B}(f') .\]
\end{proposition}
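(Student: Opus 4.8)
The plan is to read this as the fibrewise analogue of Schwarz's product inequality for sectional category, combined with the (essentially formal) lower bound; equivalently, it is the specialization of \cite[Proposition~3.11]{GCrelsecat} to the case in which the relevant subspaces are taken to be the whole base spaces, so one may simply invoke that result. For a self-contained argument I would proceed as follows, writing $m=\sct_B(f)$ and $n=\sct_B(f')$.

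For the upper bound, fix open covers $\{U_0,\dots,U_m\}$ of $X$ and $\{V_0,\dots,V_n\}$ of $X'$ together with fibrewise homotopy sections $s_i\colon U_i\to E$ of $f$ and $t_j\colon V_j\to E'$ of $f'$, so that $f\circ s_i\simeq_B i_{U_i}$ and $f'\circ t_j\simeq_B i_{V_j}$. On each product $U_i\times V_j$ the assignment $(x,x')\mapsto (s_i(x),t_j(x'))$ is a fibrewise homotopy section of $f\times f'$, since concatenating the two homotopies componentwise yields $(f\times f')\circ(s_i\times t_j)\simeq_B i_{U_i\times V_j}$. It then remains to reorganize the $(m+1)(n+1)$ sets $U_i\times V_j$ into $m+n+1$ open sets, indexed by the level $k=i+j\in\{0,\dots,m+n\}$, each still carrying a fibrewise homotopy section. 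This is precisely where normality of $X$ and $X'$ enters: using the shrinking lemma for finite open covers of a normal space one replaces $\{U_i\}$ and $\{V_j\}$ by shrinkings and then runs the standard ``staircase'' argument to arrange that, for each fixed $k$, the sets $U_i\times V_j$ with $i+j=k$ are pairwise disjoint; on their disjoint open union $W_k$ the individual sections glue to a single continuous fibrewise homotopy section of $f\times f'$. Hence $\sct_B(f\times f')\le m+n$. I expect this disjointness arrangement — the only point at which normality is genuinely needed — to be the main obstacle.

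For the lower bound it suffices, by symmetry, to prove $\sct_B(f)\le\sct_B(f\times f')$. The idea is that each factor is recovered from the product by a fibrewise ``slicing'': $f$ is (fibrewise homotopy equivalent to) a pullback of $f\times f'$ along an appropriate fibrewise map into $X\times X'$, after which one uses that, since $E'$ is nonempty, the induced map from $E\times(\text{fibre of }f')$ to $X$ has the same fibrewise sectional category as $f$. Because $\sct_B$ is non-increasing under pullback of fibrewise maps (cf.\ \cite{GC}), this yields $\sct_B(f)\le\sct_B(f\times f')$, and likewise for $f'$. Combining the two bounds gives the claim; alternatively, both inequalities are subsumed by \cite[Proposition~3.11]{GCrelsecat}.
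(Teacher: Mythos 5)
Your primary route --- invoking \cite[Proposition~3.11]{GCrelsecat} --- is exactly what the paper does: it states this proposition with no proof at all, recording it as a consequence of that cited result. So on the main point you and the paper coincide, and your self-contained sketch of the upper bound (products of local sections plus the standard ``staircase'' regrouping by $i+j=k$, with normality used to make the level sets disjoint unions) is the correct classical Schwarz argument. The one soft spot is your lower bound: ``slicing'' the product along a point of $X'$ (or a fibre of $f'$) is not a fibrewise-over-$B$ operation --- to cut out a copy of $X$ inside $X\times_B X'$ one needs a section of $p_{X'}\colon X'\to B$, not merely a point, so as written that half of the self-contained argument has a gap in the parametrized setting; it is harmless here only because the cited proposition already supplies both inequalities.
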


We now state the following result.
\begin{proposition}\label{parametrized_TC_on_product}
    Let $E$ and $\widetilde{E}$ be fibrewise spaces over $B$ such that $E, \widetilde{E}$ and $B$ are metrizable. Then 
    \[\text{ max }\{\TC_{B,r}(E), \TC_{B,r}(\widetilde{E})\}\leq \TC_{B,r}(E\times_B \widetilde{E})\leq \TC_{B,r}(E)+ \TC_{B,r}(\widetilde{E}).\]
\end{proposition}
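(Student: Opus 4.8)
The plan is to reduce the product inequality for $\TC_{B,r}$ to the product inequality for fibrewise sectional category, namely Proposition~\ref{Bound of section category of product}, applied to the evaluation fibrations $\Pi_{r,E}$ and $\Pi_{r,\widetilde{E}}$. The key identification needed is that the evaluation fibration for the fibrewise product $E\times_B\widetilde{E}$ decomposes as (a fibrewise homeomorphic copy of) the product of the two individual evaluation fibrations. Concretely, I would first check that there is a canonical fibrewise homeomorphism $P_B(E\times_B\widetilde{E})\cong P_B(E)\times_B P_B(\widetilde{E})$, sending a path $t\mapsto(\gamma(t),\widetilde{\gamma}(t))$ to the pair $(\gamma,\widetilde{\gamma})$; this is just unwinding the definition of the fibrewise cocylinder. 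Similarly $(E\times_B\widetilde{E})_B^{\,r}\cong E_B^{\,r}\times_B\widetilde{E}_B^{\,r}$ after permuting coordinates. Under these identifications, $\Pi_{r,E\times_B\widetilde{E}}$ corresponds to $\Pi_{r,E}\times_B\Pi_{r,\widetilde{E}}$, so that $\TC_{B,r}(E\times_B\widetilde{E})=\sct_B(\Pi_{r,E}\times_B\Pi_{r,\widetilde{E}})$.

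Next, to apply Proposition~\ref{Bound of section category of product} I need the target spaces to be normal. The statement assumes $E$, $\widetilde{E}$ and $B$ are metrizable; from this I would deduce that $E_B^{\,r}$ and $\widetilde{E}_B^{\,r}$ are metrizable (a fibred product of metrizable spaces over a metrizable base, being a subspace of a finite product of metrizable spaces, is metrizable), hence normal. This is exactly the hypothesis needed for Proposition~\ref{Bound of section category of product} with $f=\Pi_{r,E}$, $f'=\Pi_{r,\widetilde{E}}$, $X=E_B^{\,r}$, $X'=\widetilde{E}_B^{\,r}$. Note that Proposition~\ref{Bound of section category of product} is stated for $f\times f'$ (the external product over $B$, i.e.\ the fibrewise product of maps), which is precisely $\Pi_{r,E}\times_B\Pi_{r,\widetilde{E}}$ in our situation. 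Invoking it yields
\[
\max\{\sct_B(\Pi_{r,E}),\sct_B(\Pi_{r,\widetilde{E}})\}\le \sct_B(\Pi_{r,E}\times_B\Pi_{r,\widetilde{E}})\le \sct_B(\Pi_{r,E})+\sct_B(\Pi_{r,\widetilde{E}}),
\]
which, after translating back through Definition~\ref{SPTC} and the homeomorphism above, is exactly the claimed chain of inequalities.

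The main obstacle I anticipate is purely bookkeeping: verifying carefully that the coordinate-shuffling homeomorphism $(E\times_B\widetilde{E})_B^{\,r}\cong E_B^{\,r}\times_B\widetilde{E}_B^{\,r}$ is compatible with the evaluation maps, i.e.\ that the square
\[
\begin{tikzcd}
P_B(E\times_B\widetilde{E}) \arrow[r,"\cong"] \arrow[d,"\Pi_{r,E\times_B\widetilde{E}}"'] & P_B(E)\times_B P_B(\widetilde{E}) \arrow[d,"\Pi_{r,E}\times_B\Pi_{r,\widetilde{E}}"] \\
(E\times_B\widetilde{E})_B^{\,r} \arrow[r,"\cong"] & E_B^{\,r}\times_B\widetilde{E}_B^{\,r}
\end{tikzcd}
\]
commutes, and then appealing to the fact (implicit in \cite{Calcines-fibrewiseTC, GC}) that $\sct_B$ is invariant under replacing a fibrewise fibration by a fibrewise homeomorphic one over a homeomorphic base. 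Since all the maps in sight are the obvious evaluation maps, this commutativity holds on the nose, and the metrizability observations are standard; so the proof is short once these identifications are recorded. One should also double-check the edge behaviour in the parameter $r$ (the evaluation points $\frac{i}{r-1}$), but this plays no role beyond being the same in all three evaluation fibrations.
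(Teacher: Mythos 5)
Your proposal is correct and follows essentially the same route as the paper: identify $P_B(E\times_B\widetilde{E})$ with $P_B(E)\times_B P_B(\widetilde{E})$ and $(E\times_B\widetilde{E})_B^{\,r}$ with $E_B^{\,r}\times_B\widetilde{E}_B^{\,r}$, observe that $\Pi_{r,E\times_B\widetilde{E}}$ becomes $\Pi_{r,E}\times_B\Pi_{r,\widetilde{E}}$, and invoke Proposition~\ref{Bound of section category of product}. Your explicit verification that $E_B^{\,r}$ and $\widetilde{E}_B^{\,r}$ are metrizable (hence normal) is a detail the paper leaves implicit, and is exactly where the metrizability hypothesis is used.
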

\begin{proof}
    Let $E''=E \times_{B}E'$.
Note that, we obtain the product equalities
$$P_{B}(E'')= P_B(E)\times_{B} P_{B}({E}'),~\text{and}~ (E\times_{B} {E}')_{B}^r= E^r_B\times_{B} {E'}_{B}^r$$
and the fibration $\Pi_{r,E''}\colon P_{B}(E'')\to {E''}_{B}^r$ is equivalent to the product fibration \[\Pi_{r,E}\times_{B} \Pi_{r,{E}'}\colon P_B(E)\times_{B} P_{B}({E}')\to E_B^r\times_{B} {E'}_{B}^r.\]
 The conclusion then follows from Proposition~\ref{Bound of section category of product}.
\end{proof}

\subsection{Fibrewise pointed version of sequential topological complexity} \label{fibrewise pointed seq TC}

To define the sequential fibrewise pointed topological complexity, we begin by recalling the notion of the fibrewise pointed sectional category, originally introduced by Garc\'ia-Calcines \cite{GC}.

\begin{definition} $(${\cite[Definition~3.5]{GC}}$)$
Let \(f\colon E \to X\) be a fibrewise map over a base space \(B\). The fibrewise pointed sectional category of \(f\), denoted by \(\sct_B^B(f)\), is the least integer \(k \ge 0\) such that \(X\) admits an open cover \(\{U_0, \dots, U_k\}\) with the property that for each \(i\), there exists a fibrewise pointed map \(s_i\colon U_i \to E\) satisfying \(f \circ s_i \simeq_B^B i_{U_i}\).
\end{definition}
For a fibrewise pointed space \(E\) over \(B\), consider the fibrewise pointed fibration (similar to \eqref{evaluation})
$$\Pi_{r,E}^B\colon P_B^B(E) \to E_B^r.$$  Using the notion of fibrewise pointed sectional category, we define the following.

\begin{definition}
Let \(E\) be a fibrewise pointed space over \(B\). The sequential fibrewise pointed topological complexity of \(E\) is
\[
\TC_{B,r}^{B}(E) := \sct_B^B(\Pi_{r,E}^B).
\]
\end{definition}

Using arguments similar to those employed for $\TC_{B,r}(-)$, one can verify that $\TC_{B,r}^B(-)$ satisfies analogous properties.
\begin{proposition}
    Let $E$ be a fibrewise pointed space over $B$.
    \begin{itemize}
        \item[(i)] $\TC_{B,r}^B(E)$ can be equivalently defined as 
        $\TC_{B,r}^B(E):=\sct_B^B(\Delta_E^r).$
        \item[(ii)] If $E\simeq_B^B \widetilde{E}$ are fibrewise pointed homotopy equivalent, then $\TC_{B,r}^B(E)=\TC_{B,r}^B(\widetilde{E}).$ 
        \item[(iii)] $\TC_{B,r}^B(E)=0$ if and only if $E\simeq_B^B B$.
        \item[(iv)]  
        $\ct_B^B(E_B^{r-1})\leq \TC_{B,r}^B(E)\leq \ct_B^B(E_B^r)$, where $\ct_B^B(X)$ denotes the fibrewise pointed LS category of $X$ in the sense of \cite{Morries-James}.
        \item[(v)] Let $\lambda\colon  B'\to B$ be a continuous map and $E$ be a fibrewise space over $B$. Then the pullback construction gives rise to a functor $\lambda^*\colon  \Top(B')\to \Top(B)$ such that $\TC_{B',r}^{B'}(\lambda^*(E))\leq \TC_{B,r}^B(E).$
    \end{itemize}
\end{proposition}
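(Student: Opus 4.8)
The plan is to prove each of the five items (i)--(v) by adapting, \emph{mutatis mutandis}, the arguments already given in the unpointed case, systematically replacing $\simeq_B$ by $\simeq_B^B$, the cocylinder $P_B(E)$ by its pointed version $P_B^B(E)$, the section category $\sct_B$ by $\sct_B^B$, and the fibrewise homotopy equivalence $c\colon E\to P_B(E)$ by its pointed analogue. Concretely, for (i) I would note that the pointed structure on $P_B^B(E)$ is the one induced by the pullback, i.e. the section $(id_B, c\circ s_E)$, so that the fibrewise map $c\colon E\to P_B^B(E)$, $c(e)=(p_E(e),c_e)$, is a fibrewise \emph{pointed} homotopy equivalence (it carries $s_E(b)$ to $(b,c_{s_E(b)})$). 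Composing with $c$ identifies $\Pi_{r,E}^B\circ c$ with the diagonal $\Delta_E^r$, which is a fibrewise pointed map; then the pointed version of \cite[Proposition~5]{Calcines-fibrewiseTC} (namely, that $\sct_B^B$ is invariant under pre-composition with a fibrewise pointed homotopy equivalence) gives $\sct_B^B(\Pi_{r,E}^B)=\sct_B^B(\Delta_E^r)$.

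For (ii), I would take a fibrewise pointed homotopy equivalence $f\colon E\to\widetilde E$, observe that it induces fibrewise pointed homotopy equivalences $P_B^B(f)\colon P_B^B(E)\to P_B^B(\widetilde E)$ and $f^r_B\colon E_B^r\to\widetilde E_B^r$ (the $r$-fold fibred product of a pointed map is pointed, with section the diagonal of $s_E$), and that these fit into a commuting square with the two evaluation fibrations $\Pi_{r,E}^B$ and $\Pi_{r,\widetilde E}^B$; the pointed analogue of \cite[Proposition~5]{Calcines-fibrewiseTC} then yields equality of the two invariants. For (iii), the direction $E\simeq_B^B B\Rightarrow\TC_{B,r}^B(E)=0$ follows from (ii) together with $\TC_{B,r}^B(B)=\sct_B^B(\mathrm{Id}_B)=0$; for the converse I would reproduce the construction in the proof of Corollary~\ref{contractibility}, checking that the explicit homotopy $\overline H$ there is a \emph{pointed} homotopy, i.e. that it is stationary on $s_E(B)$ — this works because $\bar\sigma$ restricted to the constant tuple $(s_E(b),\dots,s_E(b))$ is the constant path, so $\overline H(b,c_{s_E(b)},t)(s)=s_E(b)$ for all $s,t$. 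For (iv), the upper bound is immediate from the definition and the pointed analogue of \cite[Proposition~2.3]{GC} (sectional category of a map over its target bounded by the LS category of the target), while the lower bound adapts the construction of the fibrewise categorical subset $V\subseteq E_B^{r-1}$ in Proposition~\ref{relation_SFTC_Cat}, again verifying that the homotopy $H$ there respects the section. For (v), I would recall that pullback along $\lambda$ is functorial and commutes with the cocylinder and fibred-product constructions in the pointed category, so $\Pi_{r,\lambda^*E}^{B'}$ is the pullback of $\Pi_{r,E}^B$; the inequality then follows from the pointed version of \cite[Proposition~14]{Calcines-fibrewiseTC} (monotonicity of $\sct_B^B$ under pullback).

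The main obstacle I anticipate is not any single deep step but rather the verification that all the auxiliary results invoked in the unpointed proofs have genuine pointed counterparts with the same conclusions — in particular the pointed form of \cite[Proposition~5]{Calcines-fibrewiseTC} (homotopy invariance of $\sct_B^B$), the pointed form of \cite[Proposition~2.3]{GC}, and the pointed pullback monotonicity. If any of these is not already available in the literature in the pointed form, I would need to either cite \cite{GC} for the pointed sectional category's formal properties or include a short lemma establishing them; these are routine but must be acknowledged. A secondary, genuinely computational point is checking in (iii) and (iv) that the explicitly written homotopies fix the section pointwise; this requires only unwinding the formulas for $\bar\sigma$ and $\kappa$ on constant tuples, but it is the place where the pointed hypothesis is actually used and so should be spelled out rather than left to the reader.
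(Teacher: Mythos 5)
Your proposal is correct and coincides with the paper's (entirely implicit) argument: the paper offers no proof beyond the remark that one uses ``arguments similar to those employed for $\TC_{B,r}(-)$,'' and your plan is exactly the fleshed-out version of that, with the pointed analogues of the cited auxiliary results substituted throughout. The two verification points you flag are the right ones, and both check out: in (iii) and (iv) the homotopies $\overline{H}$ and $\kappa$ are stationary on the section precisely because a fibrewise \emph{pointed} homotopy section of $\Pi_{r,E}^B$ must send the diagonal section tuple $(s_E(b),\dots,s_E(b))$ to the constant path $c_{s_E(b)}$.
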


As a consequence of \cite[Theorem 4.1]{GC}, we obtain the following inequality.
\begin{corollary}\label{comparision_various_tc}
    Let $E$ be a fibrewise pointed space over $B$. If $E$ is fibrewise locally equiconnected and $E_B^r$ is a normal space, then $\TC_{B,r}(E)\leq \TC_{B,r}^B(E)\leq \TC_{B,r}(E)+1.$
\end{corollary}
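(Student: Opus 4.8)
The statement to prove is Corollary \ref{comparision_various_tc}: for a fibrewise locally equiconnected fibrewise pointed space $E$ over $B$ with $E_B^r$ normal, one has $\TC_{B,r}(E)\leq \TC_{B,r}^B(E)\leq \TC_{B,r}(E)+1$. The plan is to reduce everything to the corresponding comparison between $\sct_B$ and $\sct_B^B$ supplied by \cite[Theorem 4.1]{GC}, applied to the relevant evaluation fibration. Concretely, I would use the equivalent descriptions $\TC_{B,r}(E)=\sct_B(\Delta_E^r)$ and $\TC_{B,r}^B(E)=\sct_B^B(\Delta_E^r)$ (the latter being part (i) of the preceding proposition), so that the claim becomes $\sct_B(\Delta_E^r)\leq \sct_B^B(\Delta_E^r)\leq \sct_B(\Delta_E^r)+1$.

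First I would record the easy inequality $\sct_B(f)\leq \sct_B^B(f)$ for any fibrewise pointed map $f$: an open cover witnessing $\sct_B^B$ comes with fibrewise \emph{pointed} homotopy sections, and forgetting the basepoint condition gives fibrewise homotopy sections, so $\sct_B(\Delta_E^r)\leq \sct_B^B(\Delta_E^r)$ holds without any hypotheses. The substantive half is the upper bound $\sct_B^B(\Delta_E^r)\leq \sct_B(\Delta_E^r)+1$, and this is exactly the kind of statement \cite[Theorem 4.1]{GC} is designed to give: under a fibrewise local equiconnectedness (local NES/cofibration-type) hypothesis on the total space together with a normality hypothesis on the base of the fibration — here the base is $E_B^r$ — the fibrewise pointed sectional category exceeds the unpointed one by at most one. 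So the core of the argument is checking that the hypotheses of that theorem are met: I must verify that $\Delta_E^r\colon E\to E_B^r$ (or equivalently $\Pi_{r,E}^B\colon P_B^B(E)\to E_B^r$) satisfies whatever fibrewise-LEC/well-pointedness condition \cite[Theorem 4.1]{GC} requires. This is where the hypothesis ``$E$ is fibrewise locally equiconnected'' enters: I would argue that fibrewise local equiconnectedness of $E$ passes to the fibred power $E_B^r$ and to the path space $P_B(E)$, so that the fibration $\Pi_{r,E}^B$ is of the type covered by the theorem, and that $E_B^r$ normal is precisely the normality hypothesis on the sectional-category base.

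The key steps in order: (1) rewrite both invariants via the diagonal, using part (i) of the preceding proposition and the earlier identification $\TC_{B,r}(E)=\sct_B(\Delta_E^r)$; (2) observe the trivial inequality $\sct_B\leq\sct_B^B$ by forgetting basepoints, giving the left-hand inequality; (3) verify that fibrewise local equiconnectedness is inherited by $E_B^r$ (a fibred product of fibrewise LEC spaces is fibrewise LEC) and hence that the section $s_{E_B^r}=\Delta_E^r\circ s_E$ is a fibrewise cofibration, so the pair $(E_B^r,s_{E_B^r}(B))$ is fibrewise well-pointed; (4) apply \cite[Theorem 4.1]{GC} to $\Delta_E^r$ (or to $\Pi_{r,E}^B$) with base $E_B^r$ normal to conclude $\sct_B^B(\Delta_E^r)\leq \sct_B(\Delta_E^r)+1$; (5) combine (2) and (4).

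The main obstacle I anticipate is step (3)–(4): making precise that the hypotheses of \cite[Theorem 4.1]{GC} — which are phrased for a single fibrewise map $f\colon E\to X$ — are satisfied for $f=\Delta_E^r$ with $X=E_B^r$, in particular that ``$E$ fibrewise locally equiconnected'' is the right condition to feed in (one needs the well-pointedness to live on the source $E$ and its section, and possibly on $P_B^B(E)$), and that normality of $E_B^r$ rather than of $E$ is what the theorem asks for on the sectional-category base. Once the dictionary between our setup and the hypotheses of \cite[Theorem 4.1]{GC} is pinned down, the rest is formal; everything else in the argument is either a definitional unravelling or the elementary basepoint-forgetting inequality.
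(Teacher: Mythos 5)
Your proposal matches the paper's argument: the paper states this corollary as a direct consequence of \cite[Theorem 4.1]{GC}, with the hypotheses ``$E$ fibrewise locally equiconnected'' and ``$E_B^r$ normal'' being precisely the input that theorem needs, and the left-hand inequality being the trivial basepoint-forgetting comparison. Your additional care in steps (3)--(4) about transporting local equiconnectedness to $E_B^r$ and identifying the correct base for the sectional category is exactly the dictionary the paper leaves implicit, so the approach is essentially identical.
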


\section{Sequential parametrized homotopic distance}\label{sec:sphd}

We here introduce the central concept of our study, the sequential version of the notion parametrized homotopic distance introduced in \cite{Calcines-Navnath}. When the base of fibrewise maps is a point, we can recover sequential homotopic distance of maps, introduced in \cite{Borat-Vergili}. Along with the key initial properties, comparison with the fibrewise versions of the Lusternik-Schnirelman (LS) category and sequential homotopy distance of maps will be explored in this section.

\subsection{Sequential parametrized homotopic distance} 
We introduce the notion of sequential parametrized homotopic distance and interpret it through the fibrewise sectional category. In particular, we obtain a cohomological lower bound for the sequential parametrized homotopic distance of fibrewise maps between fibrant spaces.
\begin{definition}\label{seqential_parametrized_homotopic distance}
Let $f_i \colon X\to Y$ be fibrewise maps between fibrewise spaces $X$ and $Y$ over $B$, for $i \in \{1, \dots,r\}$. The sequential parametrized homotopic distance of $f_1,\dots,f_r$, is denoted by $D_B(f_1,\dots, f_r)$ and is defined as the smallest non-negative integer $n$ for which there exists an open cover $\{U_0,\dots, U_n\}$ of $X$ such that $f_s|_{U_i}\simeq_{B}f_t|_{U_i}$ for $0\leq i\leq n$ and $1\leq s,t\leq r$.  If no such open cover exists, we set $D_B(f_1,\dots,f_r)=\infty$. 
\end{definition}
The sequential parametrized homotopic distance thus serves as a measure of how close finitely many fibrewise maps are to being fibrewise homotopic.

 Given fibrewise maps $f_1,f_2,\dots,f_r\colon X\to Y$ over $B$, we consider the following pullback diagram
\begin{equation}\label{Pullback_Diagram}
\begin{tikzcd}
	{\mathcal{P}_B(f_1,f_2,\dots,f_r)} && {P_B(Y)} \\
	X && {Y_B^r}.
	\arrow[from=1-1, to=1-3]
	\arrow["{\widetilde{\Pi}_{r,Y}}"', from=1-1, to=2-1]
	\arrow["{\Pi_{r,Y}}", from=1-3, to=2-3]
	\arrow["{(f_1,\dots,f_r)}"', from=2-1, to=2-3]
\end{tikzcd}\end{equation}
Here the map $\Pi_{r,Y}\colon P_B(Y)\to Y_B^r$ is given by $\Pi_{r,Y}(b,\gamma)=\left(\gamma(0),\gamma(\frac{1}{r-1}),\dots,\gamma(1)\right)$ and $(f_1,\dots,f_r)(x)= (f_1(x),\dots,f_r(x))$. Thus, $\mathcal{P}_B(f_1,\dots,f_r)$ has the following expression 
\[\mathcal{P}_B(f_1,\dots,f_r)=\{\left(x, (b,\gamma)\right)\in X\times P_B(Y)\mid f_{i+1}(x)=\gamma(\frac{i}{r-1}),~~i=0, \cdots, r-1\}.\]
Since $\Pi_{r,Y}$ is a fibrewise fibration, $\widetilde{\Pi}_{r,Y}$ is also, as they are connected by a pullback.

We now proceed to establish a connection between the sequential parametrized homotopic distance and the fibrewise sectional category.
\begin{proposition}\label{homotopic_distance_and_secat}
    Let $f_1,f_2,\dots,f_r\colon X\to Y$ be fibrewise maps over $B$. Then 
    \[D_B(f_1,\dots,f_r)= secat_B(\widetilde{\Pi}_{r,Y}).\]
\end{proposition}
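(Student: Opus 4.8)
The plan is to show both inequalities by translating between open covers witnessing $D_B(f_1,\dots,f_r)$ and open covers witnessing $\sct_B(\widetilde{\Pi}_{r,Y})$, exploiting the universal property of the pullback square \eqref{Pullback_Diagram}. The key observation is that a fibrewise map $\sigma \colon U \to \mathcal{P}_B(f_1,\dots,f_r)$ with $\widetilde{\Pi}_{r,Y}\circ\sigma = i_U$ amounts, by the pullback description, to a fibrewise map $U \to P_B(Y)$, i.e. a choice of fibrewise path $\gamma_x$ in $Y$ for each $x\in U$, such that $\gamma_x(\tfrac{i}{r-1}) = f_{i+1}(x)$ for all $i$; and such a family is exactly a simultaneous fibrewise homotopy realizing $f_1|_U \simeq_B f_2|_U \simeq_B \dots \simeq_B f_r|_U$ (concatenating the sub-paths on $[\tfrac{i}{r-1},\tfrac{i+1}{r-1}]$, suitably reparametrized, gives the pairwise homotopies, and conversely pairwise homotopies glue to such a path).

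First I would prove $\sct_B(\widetilde{\Pi}_{r,Y}) \leq D_B(f_1,\dots,f_r)$. Take an open cover $\{U_0,\dots,U_n\}$ of $X$ with $f_s|_{U_i}\simeq_B f_t|_{U_i}$ for all $s,t$; it suffices to produce, on each $U_i$, a fibrewise map $U_i \to P_B(Y)$ sending $x$ to a path through $f_1(x),\dots,f_r(x)$ at the prescribed parameter values. From the fibrewise homotopies $F_i \colon f_i|_{U_i} \simeq_B f_{i+1}|_{U_i}$ (which exist since all the $f_s|_{U_i}$ are mutually fibrewise homotopic) one builds, for each $x$, the concatenated path $\gamma_x$ that on $[\tfrac{j}{r-1},\tfrac{j+1}{r-1}]$ traces $t\mapsto F_{j+1}(x, (r-1)t - j)$; this is continuous in $x$, is a fibrewise path (it projects to the constant path $c_{p_X(x)}$ since each $F_j$ is a fibrewise homotopy), and hits $f_{i+1}(x)$ at $\tfrac{i}{r-1}$. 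Feeding $x \mapsto (x, (p_X(x), \gamma_x))$ into the pullback gives the desired fibrewise homotopy section $s_i \colon U_i \to \mathcal{P}_B(f_1,\dots,f_r)$, so this open cover also witnesses $\sct_B(\widetilde{\Pi}_{r,Y}) \leq n$.

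For the reverse inequality $D_B(f_1,\dots,f_r) \leq \sct_B(\widetilde{\Pi}_{r,Y})$, take an open cover $\{U_0,\dots,U_n\}$ of $X$ with fibrewise homotopy sections $s_i \colon U_i \to \mathcal{P}_B(f_1,\dots,f_r)$, i.e. $\widetilde{\Pi}_{r,Y}\circ s_i \simeq_B i_{U_i}$. Writing $s_i$ via the pullback as $x \mapsto (\sigma_i(x), (p_X(x), \gamma_x^i))$, the condition $\widetilde{\Pi}_{r,Y}\circ s_i \simeq_B i_{U_i}$ gives a fibrewise homotopy $\sigma_i \simeq_B \mathrm{id}_{U_i}$, and the commutativity of \eqref{Pullback_Diagram} together with the description of $\mathcal{P}_B$ gives $\gamma_x^i(\tfrac{j}{r-1}) = f_{j+1}(\sigma_i(x))$. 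Composing with the homotopy $\sigma_i \simeq_B \mathrm{id}$ — more precisely, using the homotopy lifting property of the fibrewise fibration $\widetilde{\Pi}_{r,Y}$ to replace $s_i$ by a genuine section over $U_i$ in its fibrewise homotopy class (as in the standard argument that fibrewise homotopy sections of fibrations may be promoted to strict sections up to homotopy) — one obtains, for each $x\in U_i$, a fibrewise path through $f_1(x),\dots,f_r(x)$ at the right parameters. Restricting that path to consecutive subintervals $[\tfrac{j}{r-1},\tfrac{j+1}{r-1}]$ and reparametrizing yields fibrewise homotopies $f_j|_{U_i} \simeq_B f_{j+1}|_{U_i}$ for every $j$, hence $f_s|_{U_i} \simeq_B f_t|_{U_i}$ for all $s,t$, so $D_B(f_1,\dots,f_r) \leq n$.

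The main obstacle is the direction $D_B \leq \sct_B$: one must make sure that when $\widetilde{\Pi}_{r,Y}$ is only a \emph{fibrewise} fibration (not necessarily Hurewicz) the homotopy section $s_i$ can be converted into path data actually passing through the points $f_{i+1}(x)$ rather than merely through $f_{i+1}(\sigma_i(x))$ for a map $\sigma_i$ homotopic to the identity. This is exactly where the homotopy lifting property of $\widetilde{\Pi}_{r,Y}$ — guaranteed here because it is a pullback of the fibrewise fibration $\Pi_{r,Y}$, as noted after \eqref{Pullback_Diagram} — is used to absorb the homotopy $\sigma_i \simeq_B \mathrm{id}_{U_i}$; I would state this as a lemma (fibrewise homotopy sections of fibrewise fibrations lift to strict sections up to fibrewise homotopy) or cite the analogous fact from \cite{GC}, and otherwise the argument is the routine path-concatenation/subdivision bookkeeping sketched above.
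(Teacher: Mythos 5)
Your proposal is correct and follows essentially the same route as the paper: concatenating the pairwise fibrewise homotopies into a single fibrewise path and invoking the universal property of the pullback square \eqref{Pullback_Diagram} for the inequality $\sct_B(\widetilde{\Pi}_{r,Y})\leq D_B(f_1,\dots,f_r)$, and extracting the homotopies from a homotopy section composed with $\mathcal{P}_B(f_1,\dots,f_r)\to P_B(Y)$ for the reverse one. The only (harmless) divergence is in the second direction, where you promote the fibrewise homotopy section to a strict section via the homotopy lifting property of $\widetilde{\Pi}_{r,Y}$, whereas the paper instead absorbs the discrepancy $\widetilde{\Pi}_{r,Y}\circ\sigma\simeq_B i_{U}$ by composing the $f_j$ with that homotopy and concatenating; both devices are standard and yield the same conclusion.
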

\begin{proof}
Let $U \subseteq X$ be an open subset such that 
\[
f_1|_{U} \simeq_B f_2|_{U} \simeq_B \cdots \simeq_B f_r|_{U}.
\]
Thus, there exist fibrewise homotopies
$H_1\colon f_1|_{U} \simeq_B f_2|_{U}, \ \dots, \ H_{r-1}\colon f_{r-1}|_{U} \simeq_B f_r|_{U}.$
By concatenating these, we obtain a fibrewise homotopy 
$H\colon I_B(U) \longrightarrow Y$
such that 
$$
H\!\left(u, \tfrac{i}{r-1}\right) = f_{i+1}(u),  \quad i = 0, \dots, r-1,
$$ for all ${u \in U}$.
Let $s\colon U \to P_B(Y)$ denote the adjoint of $H$. By the universal property of the pullback, there exists a unique map 
$\sigma\colon U \to \mathcal{P}_B(f_1, \dots, f_r) $
such that the following diagram commutes upto fibrewise homotopy:
\[
\begin{tikzcd}
	U \arrow[drr, bend left, "s"] \arrow[ddr, bend right, hook', "i_U"'] 
	\arrow[dr, dotted, "\sigma"] & & \\
	& {\mathcal{P}_B(f_1,\dots,f_r)} \arrow[r] \arrow[d, "\widetilde{\Pi}_{r,Y}"'] 
	& P_B(Y) \arrow[d, "\Pi_{r,Y}"] \\
	& X \arrow[r, "{(f_1,\dots,f_r)}"'] & {Y_B^r.}
\end{tikzcd}
\]
From the diagram above we see that $\widetilde{\Pi}_{r,Y}\circ\sigma \simeq_B i_U$. Therefore, by applying this argument for open covers, we obtain $ \sct_B(\widetilde{\Pi}_{r,Y})\leq D_B(f_1,\dots,f_r).$

Let $\sigma\colon U\to \mathcal{P}_B(f_1,\dots,f_r)$ be a fibrewise homotopy section of $\widetilde{\Pi}_{r,Y}$. Next, consider the composition of $\sigma$ with the fibrewise map $\mathcal{P}_B(f_1,\dots,f_r)\to P_B(Y)$. This composition produces fibrewise homotopy between $f_s\vert_U$ and $f_t\vert_U$ for all $1\leq s,t\leq r$. Again, using the same argument to an open cover of $X$, we obtain $D_B(f_1,\dots,f_r)\leq \sct_{B}(\widetilde{\Pi}_{r,Y})$. This completes the proof.
\end{proof}
Since $\widetilde{\Pi}_{r,Y}$ is the fibrewise pullback of $\Pi_{r,Y}$, we get $\sct_B(\widetilde{\Pi}_{r,Y})\leq \sct_B(\Pi_{r,Y})$. Then by using Proposition~\ref{homotopic_distance_and_secat}, we obtain the following corollary.
\begin{corollary}  
    $D_B(f_1,\dots,f_r)\leq \sct_B(\Pi_{r,Y})$.
\end{corollary}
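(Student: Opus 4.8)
The plan is to deduce the inequality directly from Proposition~\ref{homotopic_distance_and_secat} together with the monotonicity of fibrewise sectional category under pullbacks. By Proposition~\ref{homotopic_distance_and_secat} we already have the identity $D_B(f_1,\dots,f_r)=\sct_B(\widetilde{\Pi}_{r,Y})$, so it suffices to show $\sct_B(\widetilde{\Pi}_{r,Y})\le\sct_B(\Pi_{r,Y})$. Recall from diagram~\eqref{Pullback_Diagram} that $\widetilde{\Pi}_{r,Y}$ is obtained as the fibrewise pullback of $\Pi_{r,Y}$ along $(f_1,\dots,f_r)\colon X\to Y_B^r$.

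The key step is the general fact that if $g'\colon E'\to X'$ is the pullback of a fibrewise map $g\colon E\to Y'$ along a fibrewise map $\lambda\colon X'\to Y'$, then $\sct_B(g')\le\sct_B(g)$. To see this, suppose $\{V_0,\dots,V_k\}$ is an open cover of $Y'$ realizing $\sct_B(g)=k$, with fibrewise homotopy sections $s_j\colon V_j\to E$ satisfying $g\circ s_j\simeq_B i_{V_j}$. Set $U_j:=\lambda^{-1}(V_j)$; these form an open cover of $X'$. Over each $U_j$ the composite $s_j\circ(\lambda|_{U_j})\colon U_j\to E$ together with $i_{U_j}\colon U_j\hookrightarrow X'$ maps into the pullback square, and since $g\circ s_j\circ(\lambda|_{U_j})\simeq_B \lambda\circ i_{U_j}$, the universal property of the pullback (applied to the homotopy) yields a fibrewise map $\sigma_j\colon U_j\to E'$ with $g'\circ\sigma_j\simeq_B i_{U_j}$. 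Hence $\sct_B(g')\le k$.

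Applying this with $g=\Pi_{r,Y}$, $g'=\widetilde{\Pi}_{r,Y}$ and $\lambda=(f_1,\dots,f_r)$ gives $\sct_B(\widetilde{\Pi}_{r,Y})\le\sct_B(\Pi_{r,Y})$, and combining with Proposition~\ref{homotopic_distance_and_secat} completes the argument. I do not expect any real obstacle here: the only point requiring a little care is the verification that the homotopy sections descend through the pullback, which is a routine use of the universal property together with the homotopy lifting property of $\Pi_{r,Y}$ (or simply the fact that a fibrewise homotopy section over $V_j$ pulls back to one over $U_j$). This monotonicity is, in any case, a standard property of fibrewise sectional category recorded in \cite{GC}, so in the write-up it may be cited rather than reproved.
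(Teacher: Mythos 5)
Your proposal is correct and follows exactly the paper's route: the identity $D_B(f_1,\dots,f_r)=\sct_B(\widetilde{\Pi}_{r,Y})$ from Proposition~\ref{homotopic_distance_and_secat} combined with the monotonicity $\sct_B(\widetilde{\Pi}_{r,Y})\le\sct_B(\Pi_{r,Y})$ of fibrewise sectional category under pullback. The only difference is that you spell out the proof of the pullback monotonicity, whereas the paper simply invokes it as a standard fact; your verification is sound.
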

\begin{corollary} \label{homotopy_distance_to_TC}
    Let $X$ be a fibrewise space over $B$. We denote by $pr_i\colon X_B^r\to X$ the projection map corresponding to the $i$-th factor in the fibrewise product space $X_B^r$. Then \[D_B(pr_1,\dots,pr_r)= \sct_{{B}}(\Pi_{r,X}\colon P_B(X)\to X_B^r)= \TC_{B,r}(X).\]
\end{corollary}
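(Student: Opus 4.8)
The plan is to identify $D_B(pr_1,\dots,pr_r)$ with a fibrewise sectional category and then recognize that category as $\TC_{B,r}(X)$. Applying Proposition~\ref{homotopic_distance_and_secat} to the particular choice $X = X_B^r$, $Y = X$, and $f_i = pr_i \colon X_B^r \to X$, one obtains immediately
\[
D_B(pr_1,\dots,pr_r) = \sct_B(\widetilde{\Pi}_{r,X}),
\]
where $\widetilde{\Pi}_{r,X}\colon \mathcal{P}_B(pr_1,\dots,pr_r)\to X_B^r$ is the fibrewise fibration obtained by pulling back $\Pi_{r,X}\colon P_B(X)\to X_B^r$ along the map $(pr_1,\dots,pr_r)\colon X_B^r \to X_B^r$. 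The key observation is that $(pr_1,\dots,pr_r)$ is nothing but the identity map $id_{X_B^r}$: its $i$-th component sends a point of $X_B^r$ to its $i$-th coordinate, which is exactly the definition of the identity written out coordinatewise. Hence the pullback is trivial, $\mathcal{P}_B(pr_1,\dots,pr_r) = P_B(X)$ and $\widetilde{\Pi}_{r,X} = \Pi_{r,X}$, so $\sct_B(\widetilde{\Pi}_{r,X}) = \sct_B(\Pi_{r,X})$.

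Then it only remains to invoke Definition~\ref{SPTC}, which states precisely that $\TC_{B,r}(X) := \sct_B(\Pi_{r,X})$, to conclude the chain of equalities
\[
D_B(pr_1,\dots,pr_r) = \sct_B(\Pi_{r,X}\colon P_B(X)\to X_B^r) = \TC_{B,r}(X).
\]
Alternatively, and perhaps more transparently, one can argue directly from the definitions without passing through $\widetilde{\Pi}$: an open set $U\subseteq X_B^r$ satisfies $pr_s|_U \simeq_B pr_t|_U$ for all $s,t$ if and only if the fibrewise homotopies between consecutive projections can be concatenated into a fibrewise homotopy $H\colon I_B(U)\to X$ with $H(u,\tfrac{i}{r-1}) = pr_{i+1}(u)$, whose adjoint $U\to P_B(X)$ is exactly a fibrewise homotopy section of $\Pi_{r,X}$ over $U$ (since $\Pi_{r,X}$ evaluates a path at the points $\tfrac{i}{r-1}$, and $pr_{i+1}(u)$ is the $(i{+}1)$-th coordinate of $u$). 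Thus the open covers witnessing $D_B(pr_1,\dots,pr_r)\le n$ are in bijection with those witnessing $\sct_B(\Pi_{r,X})\le n$.

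I do not expect any genuine obstacle here; the statement is essentially a book-keeping corollary of Proposition~\ref{homotopic_distance_and_secat} and Definition~\ref{SPTC}. The only point requiring a line of justification is the identification $(pr_1,\dots,pr_r) = id_{X_B^r}$ and the consequent identification of the pullback fibration $\widetilde{\Pi}_{r,X}$ with $\Pi_{r,X}$ itself — but this is immediate once one writes out the pullback $\mathcal{P}_B(pr_1,\dots,pr_r) = \{(z,(b,\gamma)) : pr_{i+1}(z) = \gamma(\tfrac{i}{r-1})\}$ and notes that the conditions $pr_{i+1}(z) = \gamma(\tfrac{i}{r-1})$ for all $i$ force $z$ to be determined by $\gamma$ via $z = \Pi_{r,X}(b,\gamma)$, giving a fibrewise homeomorphism onto $P_B(X)$ compatible with the projections to $X_B^r$.
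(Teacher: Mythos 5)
Your proposal is correct and follows exactly the route the paper intends: the corollary is stated without proof immediately after Proposition~\ref{homotopic_distance_and_secat}, and the intended argument is precisely your specialization $f_i = pr_i$, the observation that $(pr_1,\dots,pr_r)=id_{X_B^r}$ makes the pullback $\widetilde{\Pi}_{r,X}$ coincide with $\Pi_{r,X}$, and Definition~\ref{SPTC}. Your explicit verification of the pullback identification (and the alternative direct argument) is a welcome filling-in of details the paper leaves implicit, but it is not a different approach.
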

We next derive a cohomological lower bound and a homotopy dimension--connectivity upper bound for the sequential parametrized homotopic distance. 
\begin{theorem} \label{lower-upper bound for fiberwise homotopic distance}
    Let $f_1,\dots,f_r\colon X\to Y$ be fibrewise maps between two fibrant spaces $X$ and $Y$ over $B$. Then for any ring $R$, we obtain
    \begin{enumerate}
        \item Let $z_1,\dots,z_k\in H^*(Y_B^r; R)$ such that for all $1\leq i\leq k$ we have
        $$(\Delta_Y^r)^*(z_i)=0~~\text{and}~~(f_1,\dots,f_r)^*(z_1 \smile \dots \smile z_k)\neq 0,$$ 
        where $(f_1,\dots,f_r)^* \colon H^*(Y_B^r; R) \rightarrow H^*(X_B; R)$ is the induced map of the fibrewise map $(f_1,\dots,f_r) \colon X_B \rightarrow Y^r_B$.
        Then $D_B(f_1,\dots,f_r)\geq k$.
        \item If $p_Y\colon Y \to B$ is an $k$-equivalence for some $k\geq 1$, with $X$ and $Y$ path-connected and $X$ having the homotopy type of a CW complex, then  
\[
D_B(f_1,\dots,f_r) \leq \tfrac{hdim(X)}{k},
\]  
where $hdim(X)$ denotes the homotopy dimension of $X$.

    \end{enumerate}
\end{theorem}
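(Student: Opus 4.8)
The plan is to reduce both statements to known results about the fibrewise sectional category via Proposition~\ref{homotopic_distance_and_secat}, which identifies $D_B(f_1,\dots,f_r)$ with $\sct_B(\widetilde{\Pi}_{r,Y})$. Since $X$ and $Y$ are fibrant, $P_B(Y)$ and $Y_B^r$ are fibrant as well, so by \cite[Theorem~2.10]{GC} the fibrewise sectional category agrees with the ordinary sectional category, $\sct_B(\widetilde{\Pi}_{r,Y}) = \sct(\widetilde{\Pi}_{r,Y})$; the pullback $\mathcal{P}_B(f_1,\dots,f_r)$ is also fibrant since $\widetilde{\Pi}_{r,Y}$ is a fibrewise fibration over the fibrant space $X$. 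This lets us deploy the classical cohomological lower bound and the classical dimension-connectivity upper bound for sectional category, translated through the homotopy-commutative square defining $\widetilde{\Pi}_{r,Y}$.

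For part (1), I would first observe that, just as in the discussion preceding Proposition~\ref{fibrewise_homotopy_invariant}, the map $c\colon Y \to P_B(Y)$ is a fibrewise homotopy equivalence with $\Pi_{r,Y}\circ c \simeq_B \Delta_Y^r$, so the pullback $\widetilde{\Pi}_{r,Y}$ of $\Pi_{r,Y}$ along $(f_1,\dots,f_r)$ is fibrewise homotopy equivalent over $X$ to the pullback of $\Delta_Y^r\colon Y \to Y_B^r$ along $(f_1,\dots,f_r)$; hence its fibrewise sectional category is the secat of that pullback. The standard cohomological lower bound for sectional category says that if classes $z_1,\dots,z_k$ pull back to zero under the map being sectioned but their cup product survives under the relevant induced map, then $\sct \ge k$. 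Concretely, the condition $(\Delta_Y^r)^*(z_i)=0$ guarantees that each $z_i$ restricts to zero on the total space of the pullback of $\Delta_Y^r$ (because that total space maps, up to homotopy, through $Y$ via the composite $Y \xrightarrow{\Delta_Y^r} Y_B^r$ factorization), so each $z_i$ lies in the kernel of the map induced by $\widetilde{\Pi}_{r,Y}$; then $(f_1,\dots,f_r)^*(z_1 \smile \cdots \smile z_k) \ne 0$ forces the nilpotency of this kernel to be at least $k$, giving $D_B(f_1,\dots,f_r) = \sct_B(\widetilde{\Pi}_{r,Y}) \ge k$ by the standard $\mathrm{nil}$-bound (cf. the fibrewise version recorded in Theorem~\ref{thm:coho-lb-secatB} adapted to the unpointed setting, or directly the classical argument of Schwarz).

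For part (2), the plan is to combine a connectivity estimate on the fibre of $\widetilde{\Pi}_{r,Y}$ with the classical upper bound $\sct(p) \le hdim(\text{base})/\mathrm{conn}(\text{fibre})$ (valid for fibrations with path-connected fibre over a CW base). The fibre of $\Pi_{r,Y}\colon P_B(Y) \to Y_B^r$ over a point is, up to homotopy, the $r$-fold fibre product of based loop spaces of the fibre of $p_Y$; more precisely, since $p_Y$ is a $k$-equivalence, each fibre $Y_b$ is $(k-1)$-connected, so the relevant fibres (which are built from path spaces in $Y_b$ rel the image points) are $(k-1)$-connected, and the same then holds for the fibre of the pullback $\widetilde{\Pi}_{r,Y}$. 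Feeding $\mathrm{conn} = k-1$, i.e. the fibre being a $k$-connected-minus-one space so that the Ganea–Schwarz bound gives $\sct \le \lceil hdim(X)/k \rceil$ — actually the sharper real-valued form $hdim(X)/k$ — yields $D_B(f_1,\dots,f_r) \le hdim(X)/k$. The main obstacle I anticipate is the second part: one must carefully verify the connectivity of the fibre of $\widetilde{\Pi}_{r,Y}$, which requires understanding the homotopy fibre of $(f_1,\dots,f_r)$ composed into the square and invoking that a $k$-equivalence $p_Y$ makes the total space $Y$ look $k$-connected relative to $B$ in the appropriate fibrewise sense; once that connectivity input is in hand, the rest is a direct citation of the classical Schwarz inequality for sectional category together with the fibrant-case identification $\sct_B = \sct$.
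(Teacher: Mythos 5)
Your overall strategy coincides with the paper's: identify $D_B(f_1,\dots,f_r)$ with $\sct_B(\widetilde{\Pi}_{r,Y})$ via Proposition~\ref{homotopic_distance_and_secat}, use fibrancy of $X$ and $Y$ together with \cite[Theorem~2.10]{GC} to pass to the ordinary (relative) sectional category, and then apply the classical cup-length lower bound and the connectivity--dimension upper bound. Your part (1) is correct and is essentially an unwinding of the paper's citation of \cite[Proposition~3.1(1)]{GCrelsecat}: the homotopy-commutative triangle $\Pi_{r,Y}\circ c\simeq_B\Delta_Y^r$ gives $(\Delta_Y^r)^*(z_i)=0$ if and only if $(\Pi_{r,Y})^*(z_i)=0$, the pullback square then places each pulled-back class in $\ker(\widetilde{\Pi}_{r,Y}^{\,*})$, and the nonvanishing of $(f_1,\dots,f_r)^*(z_1\smile\cdots\smile z_k)$ forces the nilpotency of that kernel, hence the sectional category, to be at least $k$.

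In part (2), however, your connectivity count is off by one. The fibre of $\Pi_{r,Y}$ over a point of $Y_B^r$ is homotopy equivalent to $(\Omega Y_b)^{r-1}$, where $Y_b$ is the fibre of $p_Y$. If $p_Y$ is a $k$-equivalence then $Y_b$ is $(k-1)$-connected, so $(\Omega Y_b)^{r-1}$ is only $(k-2)$-connected (looping drops connectivity by one), not $(k-1)$-connected as you assert; equivalently, $\Pi_{r,Y}$ is a $(k-1)$-equivalence, which is exactly the form of the input the paper feeds to \cite[Proposition~3.1(2)]{GCrelsecat}. Your final bound $\mathrm{hdim}(X)/k$ is nevertheless the right one, but only because of a second compensating slip: the Schwarz-type inequality for a fibration with $s$-connected fibre reads $\sct\le \mathrm{hdim}(\mathrm{base})/(s+2)$ (equivalently $\mathrm{hdim}/(n+1)$ for an $n$-equivalence), whereas you quote it once as $\mathrm{hdim}/s$ and once, implicitly, as $\mathrm{hdim}/(s+1)$. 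As written neither the connectivity claim nor the inequality you invoke is correct; the step should be repaired either by computing the fibre connectivity as $(k-2)$ and using the exponent $s+2$, or, as the paper does, by recording that $\Pi_{r,Y}$ (hence $\widetilde{\Pi}_{r,Y}$) is a $(k-1)$-equivalence and citing the $n$-equivalence form of the bound directly.
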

\begin{proof}
 {(1)} From Proposition \ref{homotopic_distance_and_secat}, we have 
$D_B(f_1,\dots,f_r) = \operatorname{secat}_B(\widetilde{\Pi}_{r,Y}).$
Since $Y$ is fibrant, the map $\Pi_{r,Y} \colon P_B(Y) \to Y_B^r$ is a Hurewicz fibration, and therefore $\widetilde{\Pi}_{r,Y}$ is also a Hurewicz fibration. Moreover, because $X$ is fibrant, the space $\mathcal{P}_B(f_1,\dots,f_r)$ is fibrant as well. Hence $\widetilde{\Pi}_{r,Y}$ is a fibrewise map between fibrant spaces over $B$. By \cite[Theorem 2.10]{GC}, it follows that $\operatorname{secat}_B(\widetilde{\Pi}_{r,Y}) = \operatorname{secat}(\widetilde{\Pi}_{r,Y}).$
Since $\Pi_{r,Y}$ is a Hurewicz fibration, \cite[Corollary 1.5]{GCrelsecat} gives 
$$\sct(\widetilde{\Pi}_{r,Y}) = \sct_{(f_1,\dots,f_r)}(\Pi_{r,Y}).$$
We note that the diagonal map $\Delta_Y^r\colon Y\to Y_B^r$ fits into the following commutative diagram
\[\begin{tikzcd}
	Y && {P_B(Y)} \\
	&& {Y_B^r},
	\arrow["{i_Y}", hook, from=1-1, to=1-3]
	\arrow["{\Delta_Y^r}"', from=1-1, to=2-3]
	\arrow["{\Pi_{r,Y}}", from=1-3, to=2-3]
\end{tikzcd}\]
where $i_Y\colon Y\to P_B(Y)$ is defined as $i_Y(y)=(p_Y(y), c_y)$. This implies that $(\Delta_Y^r)^*(z_i)=0$ if and only if $(\Pi_{r,Y})^*(z_i)=0$ for all $z_i\in H^*(Y_B^r; R)$.
Finally, applying \cite[Proposition 3.1(1)]{GCrelsecat}, we conclude that  
\[
D_B(f_1,\dots,f_r) = \operatorname{secat}_{(f_1,\dots,f_r)}(\widetilde{\Pi}_{r,Y}) \geq k.
\]

{(2)} Since $p_Y \colon Y \to B$ is a $k$-equivalence, $\Pi_{r,Y}$ is a $(k-1)$-equivalence. Now using \cite[Proposition~3.1(2)]{GCrelsecat}, we get 
\[
\operatorname{secat}_{(f_1,\dots,f_r)}(\Pi_{r,Y})\leq \frac{\operatorname{hdim}(X)}{k}.
\]
Since $D_B(f_1,\dots,f_r)=\operatorname{secat}_{(f_1,\dots,f_r)}(\Pi_{r,Y})$ as shown earlier, the result follows.
\end{proof}
The combination of the Theorem~\ref{lower-upper bound for fiberwise homotopic distance} and Corollary~\ref{homotopy_distance_to_TC} leads to the following remark.
\begin{remark}
 Suppose $f_i=pr_i\colon X^r_B\to X$ be projections for $1\leq i \leq r$ in Theorem~\ref{lower-upper bound for fiberwise homotopic distance}. Then the lower bound coincides with the cohomological lower bound on the sequential parametrized topological complexity given by Farber and Paul in \cite[Proposition 6.3]{Farber-Paul1}.

Moreover, suppose that $X$ is fibrant over $B$, path-connected, and has the homotopy type of a CW complex. If $p_X \colon X \to B$ is a $k$-equivalence, then $\TC_{B,r}(X) \leq \tfrac{hdim(X_B^r)}{k}$.
\end{remark}

\subsection{Relation with fibrewise unpointed LS Category}
We now describe the fibrewise unpointed LS category in terms of the sequential parametrized homotopic distance and establish a connection between them.
\begin{remark}\label{alter_Cat_B^*(X)}
We note from the Definition \ref{fibrewise_unpointed_LS_Cat} that $\ct_B^*(X)= D_B(id_X, \underbrace{s_X\circ p_X, \dots,s_X\circ p_X}_{(r-1) \text{ times}})$.
\end{remark}
For a fibrewise pointed space $X$ over $B$, we now consider the following fibrewise map $i_j\colon X_B^{r-1}\to X^r_B$  defined by \begin{equation}\label{inclusion}
i_j(x_1,...,x_{r-1})=\left(x_1,...x_{j-1}, s_X\circ p_{X^{r-1}}(x_1,\dots,x_{r-1}),x_{j},...,x_{r-1}\right).\end{equation}
Moreover, note that $p_{X^{r-1}}(u)= p_X(pr_1(u))$ for any $u\in X_B^{r-1}$. With these notations in place, we obtain the following result. \begin{proposition}\label{interpretation of LS category interms of SPHD}
     For a fibrewise pointed space $X$ over $B$, we get $D_B(i_1,\dots,i_r)= \ct_B^*(X_B^{r-1}).$
\end{proposition}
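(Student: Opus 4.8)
The plan is to deduce the equality from a single observation about open subsets of $X_B^{r-1}$, combined with the two defining formulas for the invariants. Write $Z = X_B^{r-1}$, with projection $p_Z = p_{X^{r-1}}$ and section $s_Z$ given by $s_Z(b) = (s_X(b),\dots,s_X(b))$, so that by Definition~\ref{fibrewise_unpointed_LS_Cat} (equivalently by Remark~\ref{alter_Cat_B^*(X)}) the invariant $\ct_B^*(Z)$ is the least $n$ for which $Z$ admits an open cover $\{U_0,\dots,U_n\}$ with each $U_k$ \emph{fibrewise categorical}, i.e. $i_{U_k}\simeq_B (s_Z\circ p_Z)|_{U_k}$; while by Definition~\ref{seqential_parametrized_homotopic distance}, $D_B(i_1,\dots,i_r)$ is the least $n$ for which $Z$ admits an open cover $\{U_0,\dots,U_n\}$ with $i_1|_{U_k}\simeq_B\cdots\simeq_B i_r|_{U_k}$ for each $k$. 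Hence it is enough to show that an open set $U\subseteq Z$ satisfies $i_1|_U\simeq_B\cdots\simeq_B i_r|_U$ \emph{if and only if} $U$ is fibrewise categorical; the two invariants are then computed from one and the same family of finite open covers, and therefore agree.

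For the ``if'' implication, suppose $G\colon I_B(U)\to Z$ is a fibrewise homotopy from $i_U$ to $(s_Z\circ p_Z)|_U$. Post-composing $G$ with $i_j$ gives, for each $j$, a fibrewise homotopy $i_j\circ G\colon I_B(U)\to X_B^r$ from $i_j|_U$ to $i_j\circ(s_Z\circ p_Z)|_U$. A direct computation, using $p_{X^{r-1}}(s_X(b),\dots,s_X(b)) = b$, shows that $i_j$ sends the constant-section $(r-1)$-tuple $(s_X(b),\dots,s_X(b))$ to the constant-section $r$-tuple $(s_X(b),\dots,s_X(b))$, independently of $j$. Thus all $i_j|_U$ are fibrewise homotopic to this common map, and so to one another; this establishes the ``if'' implication, and in particular $D_B(i_1,\dots,i_r)\le \ct_B^*(X_B^{r-1})$.

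For the ``only if'' implication I introduce auxiliary fibrewise maps. For $1\le j\le r$ let $q_j\colon X_B^r\to X_B^{r-1}$ forget the $j$-th coordinate, and for $1\le m\le r-1$ let $R_m\colon X_B^{r-1}\to X_B^{r-1}$ replace the $m$-th coordinate of $(x_1,\dots,x_{r-1})$ by $s_X(p_X(x_1))$. The identities needed are $q_{m+1}\circ i_{m+1} = \mathrm{id}_{X_B^{r-1}}$ and $q_{m+1}\circ i_m = R_m$ for $1\le m\le r-1$, together with $R_1\circ R_2\circ\cdots\circ R_{r-1} = s_Z\circ p_Z$; all three are immediate from the definitions. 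Now let $U$ be open with $i_m|_U\simeq_B i_{m+1}|_U$ for each $m$; post-composing these homotopies with $q_{m+1}$ yields a fibrewise homotopy between $R_m|_U$ and $\mathrm{id}|_U$ for every $m=1,\dots,r-1$. Finally telescope: post-composing a fibrewise homotopy between $R_k|_U$ and $\mathrm{id}|_U$ with the fibrewise map $R_1\circ\cdots\circ R_{k-1}$ produces a fibrewise homotopy between $(R_1\circ\cdots\circ R_k)|_U$ and $(R_1\circ\cdots\circ R_{k-1})|_U$; chaining these for $k=r-1,\dots,1$ gives $(s_Z\circ p_Z)|_U = (R_1\circ\cdots\circ R_{r-1})|_U\simeq_B \mathrm{id}|_U = i_U$, so $U$ is fibrewise categorical. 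This is the ``only if'' implication, completing the proof of the equivalence (and giving $\ct_B^*(X_B^{r-1})\le D_B(i_1,\dots,i_r)$).

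I expect the ``only if'' direction to be the main obstacle: one has to identify the right auxiliary maps (the coordinate-deletions $q_j$ and the single-coordinate section-collapses $R_m$), check the composition identities, and --- the crucial point --- recognise that the full section-collapse $s_Z\circ p_Z$ of $X_B^{r-1}$ factors as $R_1\circ\cdots\circ R_{r-1}$, which is exactly what makes the telescoping collapse to a fibrewise-categorical homotopy. Verifying that every map and homotopy above is fibrewise over $B$ is routine but should be recorded.
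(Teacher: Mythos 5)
Your proof is correct. The overall strategy coincides with the paper's: both arguments show that an open set $U\subseteq X_B^{r-1}$ is fibrewise categorical if and only if $i_1|_U\simeq_B\cdots\simeq_B i_r|_U$, and then pass to covers. Your easy direction is essentially the paper's (the paper writes out explicitly, as a two-stage homotopy $H'$ through the all-section tuple, exactly the concatenation you obtain by postcomposing the categorical homotopy with $i_j$ and $i_k$ and noting that all $i_j$ agree on the image of the section). The converse direction is where you genuinely diverge. The paper assembles a \emph{single} homotopy $\widetilde{F}(u,t)=\bigl(pr_2(F_1(u,t)),\dots,pr_{j+1}(F_j(u,t)),\dots,pr_r(F_{r-1}(u,t))\bigr)$, i.e.\ it keeps one coordinate from each homotopy $F_j\colon i_j|_U\simeq_B i_{j+1}|_U$ and collapses all $r-1$ coordinates to the section in parallel, with no concatenation and no auxiliary maps. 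You instead keep $r-1$ coordinates from each $F_m$ via the deletion map $q_{m+1}$, obtaining homotopies $R_m|_U\simeq_B \mathrm{id}|_U$, and then telescope these using the factorization $s_Z\circ p_Z=R_1\circ\cdots\circ R_{r-1}$, collapsing one coordinate at a time through a chain of $r-1$ concatenated homotopies. Both are valid; the paper's parallel construction is shorter and avoids concatenation altogether, while your sequential version makes the endpoint verifications $q_{m+1}\circ i_{m+1}=\mathrm{id}$ and $q_{m+1}\circ i_m=R_m$ completely transparent and isolates the one nontrivial observation (the factorization of the section-collapse) as an explicit identity.
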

\begin{proof}
Let $U\subseteq X_B^{r-1}$ be an open subset such that the following commutative diagram up to fibrewise homotopy:
\[\begin{tikzcd}
    {U} && {X_B^{r-1}} \\
    	& B.
    	\arrow["{i_{U}}", hook, from=1-1, to=1-3]
    	\arrow["{p_{X_B^{r-1}}\vert_{U}}"', from=1-1, to=2-2]
    	\arrow["{s_{X_B^{r-1}}}"', from=2-2, to=1-3]
    \end{tikzcd}
    \] 
Hence, there exists a fibrewise homotopy $H\colon  I_B(U)\to X_B^{r-1}$ such that 
    \[H(u,0)=u \text{ and } H(u,1)= s_{X_B^{r-1}}\circ p_{X_B^{r-1
}}(u) \text{ for all } u\in U.\] Note that $s_{X_B^{r-1}}\circ p_{X_B^{r-1}}(u)= \left(s_X\circ p_X(pr_1(u)),\dots,s_X\circ p_X(pr_1(u))\right)$.

Now, we define $H'\colon  I_B(U)\to X_B^r$ as follows
\[H'(u,t)=\begin{cases}
    (t_1,\dots,t_{j-1}, s_X\circ p_X(pr_1(u)), t_j,\dots, t_{r-1}) & \mbox{if } 0\leq t\leq \frac{1}{2};\\
    (w_1,\dots,w_{j},\dots, w_{k-1}, s_X\circ p_X(pr_1(u)),w_k,\dots,w_{r-1}) & \mbox{if } \frac{1}{2}\leq t\leq 1,
\end{cases}\]
where $t_l= pr_l\circ H(u,2t)$ and $w_l=pr_l\circ H(u,2-2t)$. 
Clearly, $H'(u,0)= i_j(u)$ and $H'(u,1)=i_k(u)$ for all $u\in U$. Moreover, note that
\[p_{X_B^{r-1}}\circ H'(u,t)=\begin{cases}
    p_X(t_1)= p_X\circ (pr_1\circ H(u,2t))& \mbox{if } 0\leq t\leq \frac{1}{2}\\
    p_X(w_1)= p_X\circ (pr_1\circ H(u,2-2t)) & \mbox{if } \frac{1}{2}\leq t\leq 1.
\end{cases}\]
Since $H$ is fibrewise homotopy, $p_{X_B^{r-1}}\circ H= p_X\circ pr_1\circ H\simeq_B p_{I_{B}(X)}\vert_{I_B(U)}$ and hence this implies that $p_{X_B^{r}}\circ H'(u,t)\simeq_B p_{I_B(X)}\vert_{I_B(U)}$. Therefore, $H$ is a fibrewise homotopy between $i_j\vert_U$ and $i_k\vert_{U}$ for $1\leq j\leq k\leq r$. Using the same argument applied to an open cover of $X_B^{r-1}$, we get  \[D_B(i_1,\dots,i_r)\leq \ct_B^*(X_B^{r-1}).\]

Conversely, suppose we have a fibrewise homotopy $F_j\colon  I_B(U)\to X_B^{r}$ between $i_j\vert_U$ and $i_{j+1}\vert_U$ for some open subset $U\subseteq X_B^{r-1}$ and all $1\leq j\leq r-1$.
We now define a map $\widetilde{F}\colon  I_B(U)\to X_B^{r-1}$ by
\[\widetilde{F}(u,t)=\left(pr_2(F_1(u,t)),\dots,pr_{(j+1)}(F_j(u,t)),\dots,pr_r(F_{r-1}(u,t))\right),\] where $pr_i\colon  X_B^r\to X$ denotes the projection map into the $i$-th factor. 
Clearly,
    \begin{align*}
    \widetilde{F}(u,0)
    &= \big(pr_2(F_1(u,0)),\, \dots,\, pr_{(j+1)}(F_j(u,0)),\, \dots,\, pr_r(F_{r-1}(u,0))\big) \\
    &= (x_1, \dots, x_{r-1})
\end{align*}
and
\begin{align*}
    \widetilde{F}(u,1)
    &= \big(pr_2(F_1(u,1)),\, \dots,\, pr_{(j+1)}(F_j(u,1)),\, \dots,\, pr_r(F_{r-1}(u,1))\big) \\
    &= \big(s_X \circ p_{X_B^{r-1}}(u),\, \dots,\, s_X \circ p_{X_B^{r-1}}(u)\big).
\end{align*}
This implies that $\ct_B^*(X_B^{r-1})\leq D_B(i_1,\dots,i_r)$. This completes the proof.
\end{proof}

\section{Properties of the sequential parametrized homotopic distance}
\label{properties of seq PHD}
This section is devoted to the study of the fundamental properties of the sequential parametrized homotopic distance, focusing in particular on its compatibility with composition of fibrewise maps and its invariance under fibrewise homotopy.
\begin{proposition}\label{Properties}\noindent
Let $f_i\colon X \rightarrow Y$ be fibrewise maps over $B$, for $i \in \{1, \dots, r\}$. Then
\begin{enumerate} 
    \item $D_B(f_i, f_j)= D_B(f_i,\overbrace{f_j,\dots,f_j}^{k})$ for all $k\geq 1$ and $i, j \in \{1, \dots, r\}$.
    \item  $D_B(f_1,\dots,f_r)= D_B(f_{\sigma(1)},\dots,f_{\sigma(r)})$ for any permutation $\sigma$ of $\{1,\dots,r\}$.
    \item $D_B(f_1,\dots,f_r)=0$ if and only if $f_i\simeq_{B} f_{i+1}$ for each $i\in \{1,\dots,r-1\}$.
    \item Given fibrewise maps $f_i\colon  X\to Y$ and $g_i\colon  X\to Y$ for $i\in \{1,\dots,n\}$. If $f_i\simeq_B g_i$ for each $i$, then $D_B(f_1,\dots,f_r)= D_B(g_1,\dots,g_r).$
    \item If $1< s < r$, then $D_B(f_1,\dots,f_s)\leq D_B(f_1,\dots,f_r).$
    \item Let $h_1\colon  Y\to Z$ and $h_2\colon  Z'\to X$ be fibrewise maps over $B$, then
    \begin{enumerate}
        \item $D_B(h_1\circ f_1,\dots,h_1\circ f_r)\leq D_B(f_1,\dots,f_r)$.
        \item $D_B(f_1\circ h_2,\dots, f_r\circ h_2)\leq D_B(f_1,\dots,f_r)$.
    \end{enumerate}
\end{enumerate}
\end{proposition}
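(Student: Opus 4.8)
The statement to prove collects seven elementary properties of $D_B$, so the plan is to treat each clause by directly unwinding Definition~\ref{seqential_parametrized_homotopic distance}, exploiting the symmetry and transitivity of fibrewise homotopy $\simeq_B$ on each open set of a cover. For item~(1), note that on any open set $U$ the condition "$f_i|_U \simeq_B f_j|_U$" is exactly the condition appearing when we list $f_j$ repeatedly $k$ times, since the only pairs $(f_s|_U, f_t|_U)$ that arise among $\{f_i, f_j, \dots, f_j\}$ are $f_i|_U \simeq_B f_j|_U$ and $f_j|_U \simeq_B f_j|_U$ (the latter being automatic); hence a cover witnesses one side iff it witnesses the other. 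For item~(2), a cover $\{U_0,\dots,U_n\}$ makes all $f_s|_{U_i} \simeq_B f_t|_{U_i}$ for $1\le s,t\le r$, and this set of conditions is manifestly unchanged under permuting the indices, so the same cover works for $D_B(f_{\sigma(1)},\dots,f_{\sigma(r)})$; the two invariants are therefore equal.

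For item~(3), $D_B(f_1,\dots,f_r)=0$ means the one-element cover $\{X\}$ works, i.e. $f_s \simeq_B f_t$ on all of $X$ for every $s,t$; by transitivity this is equivalent to the chain of conditions $f_i \simeq_B f_{i+1}$ for $i=1,\dots,r-1$. Item~(4) follows since $f_i \simeq_B g_i$ globally implies that on any open $U$ one has $f_s|_U \simeq_B f_t|_U \iff g_s|_U \simeq_B g_t|_U$ (compose the given homotopies), so the two invariants are computed by the same family of admissible covers. Item~(5): any cover witnessing $D_B(f_1,\dots,f_r)$ in particular makes $f_s|_{U_i} \simeq_B f_t|_{U_i}$ for $1\le s,t\le s$, hence is admissible for $D_B(f_1,\dots,f_s)$, giving $D_B(f_1,\dots,f_s)\le D_B(f_1,\dots,f_r)$.

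For item~(6)(a), given an open cover $\{U_0,\dots,U_n\}$ of $X$ with $f_s|_{U_i}\simeq_B f_t|_{U_i}$, post-composing each fibrewise homotopy with the fibrewise map $h_1\colon Y\to Z$ yields $h_1\circ f_s|_{U_i}\simeq_B h_1\circ f_t|_{U_i}$ (this uses that $h_1$ being fibrewise, $p_Z\circ h_1 = p_Y$, so the composite of a homotopy with $h_1$ is again a fibrewise homotopy); thus the same cover is admissible and the inequality follows. For item~(6)(b) one works instead with a cover of $X$ and pulls it back: if $\{V_0,\dots,V_n\}$ is an admissible cover of $X$ for $D_B(f_1,\dots,f_r)$, then $\{h_2^{-1}(V_0),\dots,h_2^{-1}(V_n)\}$ is an open cover of $Z'$, and restricting-then-precomposing a homotopy $f_s|_{V_i}\simeq_B f_t|_{V_i}$ with $h_2$ gives $f_s\circ h_2|_{h_2^{-1}(V_i)}\simeq_B f_t\circ h_2|_{h_2^{-1}(V_i)}$, again a fibrewise homotopy since $p_X\circ h_2 = p_{Z'}$. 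There is no real obstacle here; the only point requiring a line of care is checking in~(6) that pre- and post-composition with a \emph{fibrewise} map sends fibrewise homotopies to fibrewise homotopies, and in~(6)(b) that the preimages still form a cover — both are immediate. I would present (1)--(5) tersely and give the two short composition arguments in (6) with the fibrewise-compatibility check made explicit.
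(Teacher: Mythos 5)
Your proposal is correct and follows essentially the same route as the paper: items (1)--(5) by direct unwinding of the definition, (6)(a) by post-composing the homotopies with $h_1$ on the same cover, and (6)(b) by pulling the cover back along $h_2^{-1}$ and precomposing the homotopies. The paper's proof is just a terser version of the same argument.
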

\begin{proof}  
Statements \((1)\)--\((5)\) follow directly from Definition~\ref{seqential_parametrized_homotopic distance}.  

For \((6)(\text{a})\), let \(U \subseteq X\) be an open subset such that \(f_i|_U \simeq_B f_j|_U\) for all \(1 \leq i,j \leq r\).  
Applying \(h_1\) to each map gives \((h_1 \circ f_i)|_U \simeq_B (h_1 \circ f_j)|_U\) for all \(i,j\).  
Therefore,  
\[
D_B(h_1 \circ f_1, \dots, h_1 \circ f_r) \leq D_B(f_1, \dots, f_r).
\]  

For \((6)(\text{b})\), let \(U \subseteq X\) be an open subset such that \(f_i|_U \simeq_B f_j|_U\) for all \(1 \leq i,j \leq r\).  
Define \(V := h_2^{-1}(U) \subset Z'\). Then \(V\) is open in \(Z'\).  
Denote by \(\bar{h}_2 \colon V \to U\) the restriction of \(h_2\).  
The induced map \(\bar{h}_2 \colon I_B(V) \to I_B(U)\) allows us to transfer the homotopies, yielding  
\[
(f_i \circ h_2)|_V \simeq_B (f_j \circ h_2)|_V
\]
for all \(1 \leq i,j \leq r\).  
Hence,  $D_B(f_1 \circ h_2, \dots, f_r \circ h_2) \leq D_B(f_1, \dots, f_r).$
\end{proof}

Next, we establish the sub-additivity property for the sequential parametrized homotopic distance.

\begin{proposition}\label{sub-additivity}
 Let $f_{1}, \dots, f_{r}\colon  X\to Y$ be fibrewise maps over $B$. If $\{U_{0}, \dots, U_{k}\}$ is a covering of $X$, then 
\[
D_{B}(f_{1}, \dots, f_{r}) \leq \sum_{i=0}^{k} D_{B}(f_{1}|_{U_{i}}, \dots, f_{r}|_{U_{i}}) + k.
\]
\end{proposition}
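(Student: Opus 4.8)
The plan is to proceed exactly as in the classical sub-additivity argument for sectional-category-type invariants, localizing over each piece $U_i$ of the given cover and then assembling the resulting local covers into one global cover of $X$. Suppose $D_B(f_1|_{U_i},\dots,f_r|_{U_i}) = n_i$ for each $i$, so there is an open cover $\{V^i_0,\dots,V^i_{n_i}\}$ of $U_i$ (open in $U_i$, hence open in $X$ since $U_i$ is open) on which all the restrictions $f_1,\dots,f_r$ become fibrewise homotopic to one another. First I would collect, for each fixed index $j$ with $0\le j\le \max_i n_i$, the union $W_j := \bigcup_{i : j \le n_i} V^i_j$, giving open sets $W_0,\dots,W_N$ where $N = \max_{0\le i\le k} n_i$. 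These $N+1$ sets cover $X$ because every point lies in some $U_i$ and hence in some $V^i_j$.

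The subtlety is that on $W_j$ the maps $f_1,\dots,f_r$ need not be fibrewise homotopic: $W_j$ is a union of the $V^i_j$ over different $i$, and the homotopies live on the individual pieces, not on the union. So instead of merging across $i$, I would enumerate the pieces differently: index the collection $\{V^i_j : 0\le i\le k,\ 0\le j\le n_i\}$ and count how many sets we obtain. The total number of open sets is $\sum_{i=0}^k (n_i + 1) = \sum_{i=0}^k n_i + (k+1)$, and on each individual $V^i_j$ the maps $f_1,\dots,f_r$ are fibrewise homotopic by construction. Since these sets cover $X$, we obtain an open cover of $X$ of cardinality $\sum_{i=0}^k n_i + k + 1$ with the required homotopy property on each member, hence
\[
D_B(f_1,\dots,f_r) \le \sum_{i=0}^k n_i + k + 1 - 1 = \sum_{i=0}^k D_B(f_1|_{U_i},\dots,f_r|_{U_i}) + k.
\]

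The one point requiring a small verification is that a fibrewise homotopy $f_s|_{V^i_j}\simeq_B f_t|_{V^i_j}$ as maps into $Y$ restricted to $V^i_j\subseteq U_i$ is still a valid fibrewise homotopy when $V^i_j$ is regarded as a subset of $X$ — this is immediate since the projection $p_X$ restricts consistently and $V^i_j$ carries the subspace topology from both $U_i$ and $X$, which agree as $U_i$ is open. I expect this to be entirely routine; the main (minor) obstacle is simply organizing the double-indexed family of open sets and being careful not to merge sets in a way that destroys the homotopy property, as noted above. If any $D_B(f_1|_{U_i},\dots,f_r|_{U_i}) = \infty$ the inequality holds trivially, so we may assume all $n_i$ are finite.
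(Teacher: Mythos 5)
Your argument is correct and is essentially identical to the paper's proof: for each $U_i$ take the witnessing open cover of $U_i$ by $n_i+1$ sets, collect all of these pieces (without merging them) into a single open cover of $X$ of cardinality $\sum_i n_i + (k+1)$, and conclude. The brief detour about the unions $W_j$ is unnecessary but harmless, and your implicit assumption that the $U_i$ are open (so that the $V^i_j$ are open in $X$) is the same one the paper makes.
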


\begin{proof}
For each $i \in \{0, \dots, k\}$, let $m_i = D_B(f_{1}|_{U_i}, \dots, f_{r}|_{U_i}).$
By definition, there exists an open covering $\{U_i^0, \dots, U_i^{m_i}\}$
of $U_i$ such that 
\[
f_s|_{U_i^j} \simeq_B f_t|_{U_i^j}
\quad \text{for all } j \in \{0, \dots, m_i\}.
\] 
Consider the collection
$\mathcal{U} = \{U_0^0, \dots, U_0^{m_0}, \, U_1^0, \dots, U_1^{m_1}, \, \dots, \, U_k^0, \dots, U_k^{m_k}\},$
which covers $X$ and satisfies $f_s|_{V} \simeq_B f_t|_{V}$ for every $V \in \mathcal{U}$.  
Hence, $D_B(f_1, \dots, f_r) \leq (m_0 + \cdots + m_k) + (k+1)-1.$
This completes the proof.
\end{proof}

To state the next result, we recall the fibrewise unpointed LS category of a map. For a fibrewise pointed map $f:X\to Y$ over $B$, its fibrewise unpointed LS category, denoted by $\ct_B^*(f)$, is defined as the smallest non-negetive integer $k$ (or infinity if such $k$ does not exist) for which there exists an open cover $\{U_0,U_1,\dots,U_k\}$ of $X$ such that for each $0\leq i\leq n$, $f\vert_{U_i}\simeq_B s_Y\circ p_X\vert_{U_i}$.
In \cite[Corollary 4.3]{Calcines-Navnath}, Calcines and the first author showed that $D_B(f,g) \le (\ct_B^*(f)+1)(\ct_B^*(g)+1) - 1$ holds for fibrewise pointed maps \(f,g \colon  X \to Y\) over \(B\). The following proposition establishes a sequential analogue of this result.
\begin{proposition} \label{seq PHD-LS cat}
    Let $f_1,\dots,f_r\colon  X\to Y$ be fibrewise pointed maps over $B$. Then \[D_B(f_1,\dots,f_r)\leq \prod_{i=1}^r\left(\ct_B^*(f_i)+1\right)-1.\]
\end{proposition}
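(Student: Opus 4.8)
The plan is to mimic the product-cover argument used for Proposition~\ref{sub-additivity}, but exploiting the fact that for fibrewise \emph{pointed} maps the definition of $\ct_B^*(f_i)$ furnishes a \emph{common} reference map, namely $s_Y \circ p_X$. First I would set $k_i := \ct_B^*(f_i)$ for $i \in \{1,\dots,r\}$; we may assume each $k_i$ is finite, since otherwise the right-hand side is $\infty$ and there is nothing to prove. By definition of the fibrewise unpointed LS category of a map, for each $i$ there is an open cover $\{U_i^0, \dots, U_i^{k_i}\}$ of $X$ such that $f_i|_{U_i^j} \simeq_B (s_Y \circ p_X)|_{U_i^j}$ for every $0 \le j \le k_i$.

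Next I would form the family of intersections
\[
V_{(j_1,\dots,j_r)} \;:=\; U_1^{j_1} \cap U_2^{j_2} \cap \dots \cap U_r^{j_r}, \qquad 0 \le j_i \le k_i,
\]
indexed by the $\prod_{i=1}^r (k_i+1)$ tuples $(j_1,\dots,j_r)$. Each $V_{(j_1,\dots,j_r)}$ is open, and since each $\{U_i^0,\dots,U_i^{k_i}\}$ covers $X$, the family $\{V_{(j_1,\dots,j_r)}\}$ also covers $X$ (given $x \in X$, choose for each $i$ an index $j_i$ with $x \in U_i^{j_i}$). On a fixed $V = V_{(j_1,\dots,j_r)}$ we have, for every $s \in \{1,\dots,r\}$, the inclusion $V \subseteq U_s^{j_s}$, hence $f_s|_V \simeq_B (s_Y \circ p_X)|_V$. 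Thus all of $f_1|_V,\dots,f_r|_V$ are fibrewise homotopic to the single map $(s_Y \circ p_X)|_V$, and by transitivity (and symmetry) of $\simeq_B$ they are pairwise fibrewise homotopic: $f_s|_V \simeq_B f_t|_V$ for all $1 \le s,t \le r$.

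Therefore $\{V_{(j_1,\dots,j_r)}\}$ is an open cover of $X$, of cardinality $\prod_{i=1}^r (k_i+1)$, on each member of which $f_1,\dots,f_r$ are pairwise fibrewise homotopic. By Definition~\ref{seqential_parametrized_homotopic distance} this yields
\[
D_B(f_1,\dots,f_r) \;\le\; \prod_{i=1}^r (k_i+1) - 1 \;=\; \prod_{i=1}^r \bigl(\ct_B^*(f_i)+1\bigr) - 1,
\]
as claimed. There is no serious obstacle here; the only point requiring a little care is the transitivity step — concatenating the homotopies $f_s|_V \simeq_B (s_Y\circ p_X)|_V \simeq_B f_t|_V$ — which is exactly where the pointed hypothesis is used, since it guarantees the common reference map $s_Y \circ p_X$ exists. (If one wished, the same bookkeeping as in the proof of Proposition~\ref{sub-additivity} could be invoked to phrase the cover-refinement step, but the direct argument above is cleaner.)
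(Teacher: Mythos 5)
Your argument is correct and is essentially identical to the paper's own proof: both take covers realizing each $\ct_B^*(f_i)$, intersect them to obtain $\prod_{i=1}^r(\ct_B^*(f_i)+1)$ open sets, and use the common reference map $s_Y\circ p_X$ to conclude that the $f_s$ are pairwise fibrewise homotopic on each intersection. No issues.
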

\begin{proof}
    Since $\ct_B^*(f_i)= D_B(f_i,s_Y\circ p_X)$, by Proposition~\ref{Properties} {(1)} we have 
    $$\ct_B^*(f_i)=D_B(f_i,s_Y\circ p_X,\dots,s_Y\circ p_X).$$ Suppose $\ct_B^*(f_i)=m_i$ for all $i=1,\dots,r$. Then there exists an open cover $\{U_0^i,\dots,U_{m_i}^i\}$ of $X$ such that $f\vert_{U_j^i}\simeq_B s_Y\circ p_X\vert_{U_j^i}$ for all $j=0,\dots,m_i$.
Consider the open set $W_{i_1\dots i_r}= U_{i_1}^1\cap U_{i_2}^2\cap\dots U_{i_r}^r$. Then $\{W_{i_1\dots i_r}\mid 0\leq i_j\leq m_j, 1\leq j\leq r\}$ is an open cover of $X$. Moreover, $f_s\vert_{W_{i_1\dots i_r}}\simeq_B s_Y\circ p_X\vert_{W_{i_1\dots i_r}}\simeq_B f_t\vert_{W_{i_1\dots i_r}}$ for all $1\leq s,t\leq r$. This implies that \[D_B(f_1,\dots,f_r)\leq \prod_{i=1}^r\left(m_i+1\right)-1.\] This completes the proof.
\end{proof}
\subsection{Composition of fibrewise maps}\label{composition of maps}
The following propositions describe the behaviour of the sequential parametrized homotopic distance under composition.
\begin{proposition}
    Let $f_1,\dots,f_r\colon  X\to Y$ be fibrewise maps over $B$.
    \begin{itemize}
        \item[(a)] If $h_1,\dots,h_r\colon  Y \to Z$ be fibrewise map over $B$ such that $h_i\simeq_B h_{i+1}$ for all $i=1,\dots,r-1$, then $D_B(h_1\circ f_1,\dots,h_r\circ f_r)\leq D_B(f_1,\dots,f_r).$ 
        \item[(b)] If $h_1',\dots,h_r'\colon  Z \to X$ be fibrewise map over $B$ such that $h_i'\simeq_B h_{i+1}'$ for all $i=1,\dots,r-1$, then $D_B(f_1\circ h_1',\dots,f_r\circ h_r')\leq D_B(f_1,\dots,f_r).$
    \end{itemize}
\end{proposition}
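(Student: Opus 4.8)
The plan is to reduce both parts to the single-map composition inequalities already recorded in Proposition~\ref{Properties}(6), exploiting the fact that each hypothesis forces the family $h_1,\dots,h_r$ (respectively $h_1',\dots,h_r'$) to collapse to a single representative up to fibrewise homotopy.

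For part (a), I would first note that since fibrewise homotopy $\simeq_B$ is an equivalence relation on fibrewise maps, the chain $h_1\simeq_B h_2\simeq_B\cdots\simeq_B h_r$ yields $h_i\simeq_B h_1$ for every $i\in\{1,\dots,r\}$. Precomposing a fibrewise homotopy $h_i\simeq_B h_1$ with $f_i$ produces $h_i\circ f_i\simeq_B h_1\circ f_i$ as fibrewise maps $X\to Z$; the only point to check is that composing a fibrewise homotopy $H\colon I_B(Y)\to Z$ with a fibrewise map is again a fibrewise homotopy, which is immediate from $p_Y\circ f_i=p_X$ together with the fact that $I_B(X)$ carries the projection pulled back along the first factor (so $p_Z\circ H\circ(f_i\times\mathrm{id})=p_Y\circ pr_1\circ(f_i\times\mathrm{id})=p_X\circ pr_1$). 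Applying Proposition~\ref{Properties}(4) to the families $(h_i\circ f_i)_i$ and $(h_1\circ f_i)_i$ gives
\[
D_B(h_1\circ f_1,\dots,h_r\circ f_r)=D_B(h_1\circ f_1,\dots,h_1\circ f_r),
\]
and the right-hand side is $\le D_B(f_1,\dots,f_r)$ by Proposition~\ref{Properties}(6)(a) applied to the single map $h_1\colon Y\to Z$.

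For part (b), the argument is symmetric. The chain $h_1'\simeq_B\cdots\simeq_B h_r'$ gives $h_i'\simeq_B h_1'$ for all $i$, and postcomposing with $f_i$ shows $f_i\circ h_i'\simeq_B f_i\circ h_1'$ as fibrewise maps $Z\to Y$ (again the fibrewise condition on the resulting homotopy is automatic, since $p_Y\circ f_i=p_X$). Proposition~\ref{Properties}(4) then gives
\[
D_B(f_1\circ h_1',\dots,f_r\circ h_r')=D_B(f_1\circ h_1',\dots,f_r\circ h_1'),
\]
and the right-hand side is $\le D_B(f_1,\dots,f_r)$ by Proposition~\ref{Properties}(6)(b) applied to the single map $h_1'\colon Z\to X$.

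I do not anticipate a genuine obstacle here: the entire content is the observation that the hypothesis $h_i\simeq_B h_{i+1}$ lets one replace the whole family by $h_1$ (respectively $h_1'$) up to fibrewise homotopy, after which Propositions~\ref{Properties}(4) and \ref{Properties}(6) finish the proof. The only routine verification is the stability of fibrewise homotopies under composition with fibrewise maps, which I would dispatch in a single line as above.
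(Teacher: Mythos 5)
Your proof is correct, and it takes a genuinely different route from the paper's. The paper argues directly at the level of open covers: for (a) it fixes an open set $U$ on which the $f_i$ are fibrewise homotopic via homotopies $H_i$, takes homotopies $\widetilde H_i\colon h_i\simeq_B h_{i+1}$, and glues them by the diagonal formula $\mathcal F_i(u,t)=\widetilde H_i(H_i(u,t),t)$; for (b) it intersects the preimages $(h_i')^{-1}(U)$ and chains restrictions of the two kinds of homotopies. You instead observe that the hypothesis collapses the whole family $h_1,\dots,h_r$ (resp.\ $h_1',\dots,h_r'$) to the single map $h_1$ (resp.\ $h_1'$) up to fibrewise homotopy, so that termwise $h_i\circ f_i\simeq_B h_1\circ f_i$ and $f_i\circ h_i'\simeq_B f_i\circ h_1'$; then Proposition~\ref{Properties}(4) converts the tuple into one with a fixed outer (resp.\ inner) map, and Proposition~\ref{Properties}(6) finishes. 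Your reduction is shorter, requires no new explicit homotopy constructions, and in particular sidesteps the step in the paper's proof of (b) where one must compose $f_{i+1}$ with the homotopy $h_i'\simeq_B h_{i+1}'$ restricted to $V$ (whose image need not remain in $U$, so one must use that $f_{i+1}$ is globally defined); the paper's direct construction, on the other hand, is self-contained and does not lean on the earlier formal properties. Both are valid; your formal argument is the cleaner of the two given that Proposition~\ref{Properties} is already in place.
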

\begin{proof}
    {(a)} Let $U\subseteq X$ be an open subset with fibrewise homotopy $H_i\colon  f_i\vert_U\simeq_B f_{i+1}\vert_U$ for all $1\leq i\leq r-1$. Suppose $\widetilde{H_i}$ be the fibrewise homotopy between $h_i$ and $h_{i+1}$ for each $i=1.\dots,r-1$. Using these homotopies we define 
    \begin{equation*}\label{action_of_theta_n_on_S} 
    \begin{aligned}
    \mathcal{F}_i\colon  I_B(U) &\longrightarrow Z \\
    \hspace{3em} \left(u, t\right) &\mapsto \widetilde{H_i}\left(H_i(u,t),t\right).
    \end{aligned}
\end{equation*}
Clealy, $\mathcal{F}_i$ is a fibrewise homotopy between $(h_i\circ f_i)\vert_U$ and $(h_{i+1}\circ f_{i+1})\vert_U$. Carrying out the similar argument on an open cover of $X$ leads to
\[D_B(h_1\circ f_1,\dots,h_r\circ f_r)\leq D_B(f_1,\dots,f_r).\]

{(b)} Consider an open subset $U\subseteq X$ with fibrewise homotopy $f_i\vert_U\simeq_B f_{i+1}\vert_U$. Define the open set $V:=\bigcap_{i=1}^r (h_i')^{-1}(U)\subset Z$. Let $H_{j}\colon  V\to U$ be the restriction map of $h_j'$. Then \[(f_i\circ h_i)\vert_V = f_i\vert_U\circ H_{i}\simeq_B f_{i+1}\vert_U\circ H_{i}\simeq f_{i+1}\vert_U\circ H_{i+1}= (f_{i+1}\circ h_{i+1})\vert_V.\]

Now applying this argument to an open cover gives the required inequality. 
\end{proof}
\begin{proposition}\label{compo}
    Let $h_1,\dots,h_r\colon  Z\to X$ and $f_1,\dots,f_r\colon  X\to Y$ be fibrewise maps such that $f_i\circ h_s\simeq_B f_{i+1}\circ h_s$ for all $1\leq i\leq (r-1)$ and $2\leq s\leq r$. Then \[D_B(f_1\circ h_1,\dots,f_r\circ h_1)\leq D_B(h_1,\dots,h_r).\]
\end{proposition}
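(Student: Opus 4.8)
The plan is to unwind both invariants through Definition~\ref{seqential_parametrized_homotopic distance} and argue one open set at a time. Assume $D_B(h_1,\dots,h_r)=n$ is finite (otherwise there is nothing to prove) and fix an open cover $\{U_0,\dots,U_n\}$ of $Z$ witnessing it, so that $h_s|_{U_j}\simeq_B h_t|_{U_j}$ for every $j$ and all $1\le s,t\le r$. I would show that this same cover witnesses $D_B(f_1\circ h_1,\dots,f_r\circ h_1)\le n$; equivalently, that on each $U_j$ the composites $(f_1\circ h_1)|_{U_j},\dots,(f_r\circ h_1)|_{U_j}$ are mutually fibrewise homotopic.

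Fix $j$ and write $U=U_j$. It suffices to prove $(f_s\circ h_1)|_U\simeq_B(f_1\circ h_1)|_U$ for every $s$, since mutual fibrewise homotopy then follows from symmetry and transitivity of $\simeq_B$. The case $s=1$ is trivial, and for $s\ge 2$ I would build a three-step chain: (i) post-compose the fibrewise homotopy $h_1|_U\simeq_B h_s|_U$ with $f_s$ to obtain $(f_s\circ h_1)|_U\simeq_B(f_s\circ h_s)|_U$; (ii) restrict to $U$ the global chain $f_1\circ h_s\simeq_B f_2\circ h_s\simeq_B\cdots\simeq_B f_r\circ h_s$ — available for $s\ge 2$ by concatenating the hypothesized homotopies $f_i\circ h_s\simeq_B f_{i+1}\circ h_s$ — to get $(f_s\circ h_s)|_U\simeq_B(f_1\circ h_s)|_U$; (iii) post-compose $h_s|_U\simeq_B h_1|_U$ with $f_1$ to get $(f_1\circ h_s)|_U\simeq_B(f_1\circ h_1)|_U$. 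Concatenating (i)--(iii) gives $(f_s\circ h_1)|_U\simeq_B(f_1\circ h_1)|_U$, as required. Performing this for every $j$ shows $\{U_0,\dots,U_n\}$ is a cover of $Z$ on which $f_1\circ h_1,\dots,f_r\circ h_1$ are all fibrewise homotopic, hence $D_B(f_1\circ h_1,\dots,f_r\circ h_1)\le n$.

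The only real subtlety — and the reason the hypothesis is phrased with the restriction $2\le s\le r$ — is that the assumed homotopies are \emph{not} available for the string $f_\bullet\circ h_1$ that appears in the conclusion, so step (ii) cannot be applied to it directly. The purpose of steps (i) and (iii) is exactly to "pivot" from $h_1$ to $h_s$ using the fibrewise homotopies supplied by $D_B(h_1,\dots,h_r)$, apply the hypothesis there, and pivot back. Apart from this bookkeeping, the argument relies only on the routine facts that post-composition with a fixed fibrewise map preserves fibrewise homotopy, that restriction to an open subset preserves fibrewise homotopy, and that fibrewise homotopies can be concatenated; none of these poses any difficulty, so I expect the pivoting observation to be the sole conceptual point.
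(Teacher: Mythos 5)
Your proof is correct and follows essentially the same route as the paper: take the cover witnessing $D_B(h_1,\dots,h_r)$, post-compose the homotopies among the $h_s$ with the $f_i$, and splice in the hypothesized global homotopies $f_i\circ h_s\simeq_B f_{i+1}\circ h_s$ (available only for $s\ge 2$) to pass between the different $f_i$. If anything, your explicit three-step ``pivot'' from $h_1$ to $h_s$ and back is written more carefully than the paper's own argument, which states the intermediate homotopies for $f_i\circ h_i$ rather than $f_i\circ h_1$ and contains an apparent typo in the citation of the hypothesis.
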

\begin{proof}
    Let $U\subseteq Z$ be an open subset such that $h_s\vert_U\simeq_B h_t\vert_U$ for all $1\leq s,t\leq r$. This implies that $f_i\circ h_i\vert_U\simeq_B f_i\circ h_{i+1}\vert_U$.
    Since $f_i\circ h_s\simeq_B f_i\circ h_s$, we have
    $f_i\circ h_i\vert_U\simeq_B f_{i+1}\circ h_{i+1}\vert_U$ for all $1\leq i\leq r-1$. Using this argument for an open cover of $Z$, we obtain the desired inequality.
\end{proof}
The preceding proposition leads to the following corollary.
\begin{corollary}\label{bound}
    Let $f_1,\dots,f_r\colon  X\to Y$ be two fibrewise pointed maps over $B$. Then \[D_B(f_1,\dots,f_r)\leq \min\{\ct_B^*(X), \TC_{B,r}(Y)\}.\]
\end{corollary}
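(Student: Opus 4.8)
The plan is to establish the two inequalities $D_B(f_1,\dots,f_r)\le\TC_{B,r}(Y)$ and $D_B(f_1,\dots,f_r)\le\ct_B^*(X)$ separately; together they give the asserted bound by the minimum. Note that $\ct_B^*(X)$ is meaningful here because the hypothesis that the $f_i$ are fibrewise pointed equips $X$ with a section $s_X$, so $X$ is a fibrewise pointed space.

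For the bound by $\TC_{B,r}(Y)$, I would start from the identification $\TC_{B,r}(Y)=D_B(pr_1,\dots,pr_r)$ of Corollary~\ref{homotopy_distance_to_TC}, where $pr_i\colon Y_B^r\to Y$ is the projection onto the $i$-th factor. Since $p_Y\circ f_i=p_X$ for every $i$, the assignment $x\mapsto(f_1(x),\dots,f_r(x))$ defines a fibrewise map $(f_1,\dots,f_r)\colon X\to Y_B^r$, and clearly $pr_i\circ(f_1,\dots,f_r)=f_i$. Applying the precomposition inequality Proposition~\ref{Properties}(6)(b) with this map playing the role of $h_2$ then yields
\[D_B(f_1,\dots,f_r)=D_B\big(pr_1\circ(f_1,\dots,f_r),\dots,pr_r\circ(f_1,\dots,f_r)\big)\le D_B(pr_1,\dots,pr_r)=\TC_{B,r}(Y).\]

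For the bound by $\ct_B^*(X)$, I would argue directly. Set $n=\ct_B^*(X)$ and choose, via Definition~\ref{fibrewise_unpointed_LS_Cat}, an open cover $\{U_0,\dots,U_n\}$ of $X$ together with fibrewise homotopies $H_j\colon i_{U_j}\simeq_B s_X\circ p_X|_{U_j}$. For each $i\in\{1,\dots,r\}$ the composite $f_i\circ H_j$ is again a fibrewise homotopy (postcomposing a fibrewise homotopy with a fibrewise map), and it runs from $f_i|_{U_j}$ to $f_i\circ s_X\circ p_X|_{U_j}=s_Y\circ p_X|_{U_j}$, the last equality using $f_i\circ s_X=s_Y$. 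Hence on each $U_j$ all of $f_1|_{U_j},\dots,f_r|_{U_j}$ are fibrewise homotopic to the common map $s_Y\circ p_X|_{U_j}$, so $\{U_0,\dots,U_n\}$ witnesses $D_B(f_1,\dots,f_r)\le n$. Alternatively, this inequality drops out of Proposition~\ref{compo} with $h_1=id_X$ and $h_2=\dots=h_r=s_X\circ p_X$, combined with Remark~\ref{alter_Cat_B^*(X)}, since $f_i\circ s_X\circ p_X=s_Y\circ p_X$ is independent of $i$, so the hypothesis $f_i\circ h_s\simeq_B f_{i+1}\circ h_s$ holds trivially.

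I do not expect a genuine obstacle: each half is essentially a one-line consequence of results already available. The only points requiring a moment's care are checking that $(f_1,\dots,f_r)$ really lands in the fibred product $Y_B^r$, which is immediate from $p_Y\circ f_i=p_X$, and the routine fact that postcomposing a fibrewise homotopy with a fibrewise map again yields a fibrewise homotopy, which is what legitimizes replacing each $f_i$ by $s_Y\circ p_X$ on a fibrewise categorical open set.
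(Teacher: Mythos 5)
Your proposal is correct, and both halves reach the same conclusion as the paper by closely related means. For the bound by $\ct_B^*(X)$ your argument is essentially identical to the paper's: the paper applies Proposition~\ref{compo} with $h_1=Id_X$ and $h_l=s_X\circ p_X$ for $l\ge 2$ together with Remark~\ref{alter_Cat_B^*(X)}, which is exactly the alternative you mention; your primary version merely unwinds that same argument into an explicit manipulation of the categorical open cover, using $f_i\circ s_X=s_Y$ to produce the common target $s_Y\circ p_X|_{U_j}$. For the bound by $\TC_{B,r}(Y)$ you take a slightly different route: the paper stays at the level of sectional category, writing $D_B(f_1,\dots,f_r)=\sct_B(\widetilde{\Pi}_{r,Y})\le \sct_B(\Pi_{r,Y})=\TC_{B,r}(Y)$ via the pullback comparison, whereas you use the identification $\TC_{B,r}(Y)=D_B(pr_1,\dots,pr_r)$ of Corollary~\ref{homotopy_distance_to_TC} and the precomposition monotonicity of Proposition~\ref{Properties}(6)(b) applied to $(f_1,\dots,f_r)\colon X\to Y_B^r$. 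The two derivations are equivalent in strength (Corollary~\ref{homotopy_distance_to_TC} is itself proved through the sectional-category picture), but yours has the small advantage of staying entirely within the formal axioms of the distance $D_B$ without invoking fibrations or pullbacks, while the paper's version makes the mechanism (restriction of a fibrewise fibration along a map) more transparent. No gaps.
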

\begin{proof}
    We note that $f_i\circ s_X\circ p_X= s_Y\circ p_X=f_{i+1}\circ s_X\circ p_X$. Now setting $h_1= Id_X$ and $h_l= s_X\circ p_X$ for all $2\leq l\leq r$ in the Proposition~\ref{compo}, we get \[D_B(f_1,\dots,f_r)\leq D_B(id_X,s_X\circ p_X,\dots,s_X\circ p_X)=\ct_B^*(X).\]

   Again from Proposition~\ref{homotopic_distance_and_secat} and the fact that \(\widetilde{\Pi}_{r,Y}\) is the fibrewise pullback of \(\Pi_{r,Y}\), it follows that 
\[
D_B(f_1,\dots,f_r) = \sct_B(\widetilde{\Pi}_{r,Y}) \le \sct_B(\Pi_{r,Y}) = \TC_{B,r}(Y).
\]
\end{proof}
\begin{remark}
Since $\TC_{B,r}(X) = D_B(pr_1, \dots, pr_r)$, the Corollary~\ref{lower-upper bound for fiberwise homotopic distance} recovers the upper bound $\TC_{B,r}(X) \le \ct_B^*(X_B^r)$ from Proposition~\ref{relation_SFTC_Cat}.
\end{remark}

We now prove that the notion of sequential parametrized homotopic distance is invariant under fibrewise homotopy.
\begin{proposition}\label{homotopic_distance_under_fibre_he}
    Let $f_1,\dots,f_r\colon  X\to Y$ be  fibrewise maps over $B$. 
    \begin{enumerate}
        \item If there exists a fibrewise map $h\colon  Y\to Y'$ with a left fibrewise homotopy inverse, then \[D_B(h\circ f_1,\dots,h\circ f_r)= D_B(f_1,\dots,f_r).\]
        \item If there exists a fibrewise map $H\colon  X'\to X$ with a right fibrewise homotopy inverse, then \[D_B(f_1\circ H,\dots,f_r\circ H)= D_B(f_1,\dots,f_r).\]
    \end{enumerate}
\end{proposition}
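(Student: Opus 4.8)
The plan is to prove both directions of each equality by combining Proposition~\ref{Properties}(6) (which gives the two ``$\leq$'' inequalities directly for arbitrary composition by a map on the left or right) with the analogous inequality applied to the homotopy inverse. Concretely, for part (1), let $h \colon Y \to Y'$ admit a left fibrewise homotopy inverse $g \colon Y' \to Y$, so that $g \circ h \simeq_B \mathrm{id}_Y$. By Proposition~\ref{Properties}(6)(a) applied with $h_1 = h$, we immediately get
\[
D_B(h\circ f_1,\dots,h\circ f_r)\leq D_B(f_1,\dots,f_r).
\]
For the reverse inequality, apply Proposition~\ref{Properties}(6)(a) again, this time with $g$ in place of $h_1$ and the maps $h\circ f_i$ in place of $f_i$, to obtain
\[
D_B(g\circ h\circ f_1,\dots,g\circ h\circ f_r)\leq D_B(h\circ f_1,\dots,h\circ f_r).
\]
Now $g\circ h\circ f_i \simeq_B f_i$ for each $i$, so by Proposition~\ref{Properties}(4) the left-hand side equals $D_B(f_1,\dots,f_r)$, and chaining the two inequalities gives equality.

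For part (2), let $H \colon X' \to X$ admit a right fibrewise homotopy inverse $G \colon X \to X'$, so $H \circ G \simeq_B \mathrm{id}_X$. By Proposition~\ref{Properties}(6)(b) applied with $h_2 = H$ we get $D_B(f_1\circ H,\dots,f_r\circ H)\leq D_B(f_1,\dots,f_r)$. For the reverse direction, apply Proposition~\ref{Properties}(6)(b) with $h_2 = G$ and the maps $f_i \circ H$ in place of $f_i$, yielding
\[
D_B(f_1\circ H\circ G,\dots,f_r\circ H\circ G)\leq D_B(f_1\circ H,\dots,f_r\circ H).
\]
Since $f_i \circ H \circ G \simeq_B f_i$ for every $i$, Proposition~\ref{Properties}(4) identifies the left-hand side with $D_B(f_1,\dots,f_r)$, which completes the argument.

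There is no real obstacle here — the statement is a formal consequence of monotonicity under composition (Proposition~\ref{Properties}(6)) together with fibrewise-homotopy invariance of $D_B$ in each slot (Proposition~\ref{Properties}(4)). The only point requiring a line of care is that when we post- or pre-compose with the homotopy inverse, the composite $g \circ h \circ f_i$ (resp.\ $f_i \circ H \circ G$) is fibrewise homotopic to $f_i$ but not literally equal to it; this is exactly what Proposition~\ref{Properties}(4) is for. It is also worth noting in passing that the roles of ``left'' and ``right'' homotopy inverse are forced by the direction of composition: in (1) we compose $h$ on the left, so we need $g\circ h\simeq_B\mathrm{id}$, i.e.\ $g$ is a left homotopy inverse of $h$; in (2) we compose $H$ on the right, so we need $H\circ G\simeq_B\mathrm{id}$, i.e.\ $G$ is a right homotopy inverse of $H$.
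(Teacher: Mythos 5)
Your argument is correct and coincides with the paper's own proof: both establish part (1) by writing $D_B(f_1,\dots,f_r)=D_B(g\circ h\circ f_1,\dots,g\circ h\circ f_r)\le D_B(h\circ f_1,\dots,h\circ f_r)\le D_B(f_1,\dots,f_r)$ using Proposition~\ref{Properties}(4) and two applications of Proposition~\ref{Properties}(6), and treat part (2) symmetrically. No issues.
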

\begin{proof}
    Let $h'\colon  Y'\to Y$ be the left fibrewise homotopy inverse of $h$. Then $h'\circ h\simeq_B Id_Y$. Now applying Proposition \ref{Properties} {(6)}, we obtain 
    \begin{align*}
D_B(f_1,\dots,f_r) &= D_B(h'\circ h\circ f_1,\dots, h'\circ h\circ f_r) \\
&\leq D_B(h\circ f_1,\dots,h\circ f_r) \\
&\leq D_B(f_1,\dots,f_r).
\end{align*}
This completes the proof of {(1)}.
Similarly, we can prove {(2)}.
\end{proof}
The Proposition~\ref{homotopic_distance_under_fibre_he} shows that the sequential parametrized homotopic distance is invariant under fibrewise homotopy, in the following sense.
\begin{corollary}
    Let $f_1,\dots,f_r\colon  X\to Y$ and $g_1,\dots,g_r\colon  \widetilde{X}\to \widetilde{Y}$ be fibrewise maps over $B$. Assume there exist fibrewise homotopy equivalences $\alpha: \widetilde{X}\simeq_B X$ and $\eta: Y\simeq_B \widetilde{Y}$ such that $g_i\simeq_B \eta\circ f_i\circ \alpha$ for all $i=1,\dots,r$. Then $D_B(f_1,\dots,f_r)= D_B(g_1,\dots,g_r).$
\end{corollary}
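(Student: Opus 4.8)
The plan is to reduce the statement to the two invariance results already proven in Proposition~\ref{homotopic_distance_under_fibre_he}, together with the homotopy-invariance of $D_B$ under fibrewise homotopic replacement of the maps (Proposition~\ref{Properties}(4)). First I would observe that the fibrewise homotopy equivalence $\alpha\colon \widetilde{X}\simeq_B X$ is in particular a fibrewise map admitting a right fibrewise homotopy inverse (indeed a genuine fibrewise homotopy inverse), and similarly $\eta\colon Y\simeq_B \widetilde{Y}$ is a fibrewise map admitting a left fibrewise homotopy inverse. So the two cases of Proposition~\ref{homotopic_distance_under_fibre_he} apply directly to these maps.

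The computation then runs as follows. Starting from the hypothesis $g_i\simeq_B \eta\circ f_i\circ\alpha$, Proposition~\ref{Properties}(4) gives
\[
D_B(g_1,\dots,g_r)=D_B(\eta\circ f_1\circ\alpha,\dots,\eta\circ f_r\circ\alpha).
\]
Next, applying Proposition~\ref{homotopic_distance_under_fibre_he}(1) with the map $\eta$ (which has a left fibrewise homotopy inverse) to the fibrewise maps $f_i\circ\alpha\colon \widetilde{X}\to Y$ yields
\[
D_B(\eta\circ f_1\circ\alpha,\dots,\eta\circ f_r\circ\alpha)=D_B(f_1\circ\alpha,\dots,f_r\circ\alpha).
\]
Finally, applying Proposition~\ref{homotopic_distance_under_fibre_he}(2) with the map $\alpha$ (which has a right fibrewise homotopy inverse) to the fibrewise maps $f_i\colon X\to Y$ gives
\[
D_B(f_1\circ\alpha,\dots,f_r\circ\alpha)=D_B(f_1,\dots,f_r).
\]
Chaining these equalities produces $D_B(g_1,\dots,g_r)=D_B(f_1,\dots,f_r)$, as desired.

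The only mild point of care, and the one thing I would make explicit, is that the domains and codomains match up correctly so that each cited proposition applies verbatim: in step two, $\eta$ is postcomposed with maps out of $\widetilde{X}$ into $Y$, matching the shape "$h\circ f_i$" of Proposition~\ref{homotopic_distance_under_fibre_he}(1); in step three, $\alpha$ is precomposed with maps $f_i\colon X\to Y$, matching the shape "$f_i\circ H$" of part (2). There is no real obstacle here — the argument is a formal chaining of already-established invariance statements, and the corollary is essentially a restatement packaging them into a single "two-sided" homotopy invariance. I would keep the proof to the three displayed equalities above with a one-line justification for each.
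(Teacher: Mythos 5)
Your proof is correct and is exactly the argument the paper intends: the corollary is stated without proof as an immediate consequence of Proposition~\ref{homotopic_distance_under_fibre_he} together with the homotopy invariance in Proposition~\ref{Properties}(4), which is precisely the chain of three equalities you give. Nothing further is needed.
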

\subsection{Fibrewise maps on normal spaces}\label{Fibrewise_maps_on_normal_spaces}

In \cite{Calcines-Navnath}, Garc\'ia-Calcines and the first author obtained a triangle inequality for the parametrized homotopic distance when the domain is normal. The following theorem shows that an analogous inequality holds in the sequential framework.

\begin{theorem}\label{sub-additive}
Let $X$ be a normal space and let $n,m \in \mathbb{Z}^+$ with $n \leq m$. Suppose we are given fibrewise maps 
$f_1, \dots, f_n, g_1, \dots, g_m, h_1, \dots, h_m \colon X \to Y$
over $B$. Then
\[
D_B(f_1, \dots, f_n, h_1, \dots, h_m) 
\leq D_B(f_1, \dots, f_n, g_{\sigma(1)}, \dots, g_{\sigma(s)}) 
+ D_B(g_{\beta(1)}, \dots, g_{\beta(s')}, h_1, \dots, h_m),
\]
where $\sigma$ and $\beta$ are permutations of $\{1, \dots, m\}$ and $\{1, \dots, n\}$, respectively, such that 
$g_{\sigma(i)} \simeq_B g_{\beta(j)}$ for some $i \in \{1, \dots, m\}$ and $j \in \{1, \dots, n\}$.
\end{theorem}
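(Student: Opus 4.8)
The plan is to adapt the proof of the non-sequential triangle inequality of \cite{Calcines-Navnath}, working directly with open covers and using the normality of $X$ to amalgamate two covers into a single cover whose cardinality is their combined size. We may assume both right-hand distances are finite, since otherwise the inequality is vacuous. Set $k = D_B(f_1,\dots,f_n,g_{\sigma(1)},\dots,g_{\sigma(s)})$ and $l = D_B(g_{\beta(1)},\dots,g_{\beta(s')},h_1,\dots,h_m)$, and choose open covers $\{U_0,\dots,U_k\}$ and $\{V_0,\dots,V_l\}$ of $X$ realizing them: by Definition~\ref{seqential_parametrized_homotopic distance}, on each $U_a$ the maps $f_1,\dots,f_n,g_{\sigma(1)},\dots,g_{\sigma(s)}$ restrict to pairwise fibrewise homotopic maps, and on each $V_b$ the maps $g_{\beta(1)},\dots,g_{\beta(s')},h_1,\dots,h_m$ do. The hypothesis gives indices with $g_{\sigma(i)}\simeq_B g_{\beta(j)}$ as fibrewise maps $X\to Y$, both of which occur in the respective lists. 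Hence on each intersection $U_a\cap V_b$ we may concatenate the restricted fibrewise homotopies along chains of the form $f_p\simeq_B g_{\sigma(i)}\simeq_B g_{\beta(j)}\simeq_B h_q$ (and similarly $f_p\simeq_B f_{p'}$, $h_q\simeq_B h_{q'}$ through $g_{\sigma(i)}$, resp. $g_{\beta(j)}$); concatenation of fibrewise homotopies is again fibrewise, so all of $f_1,\dots,f_n,h_1,\dots,h_m$ are pairwise fibrewise homotopic on $U_a\cap V_b$. Thus $\{U_a\cap V_b\}$ witnesses the homotopy condition for $(f_1,\dots,f_n,h_1,\dots,h_m)$, but only with $(k+1)(l+1)$ members.

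Next I would invoke the normality reduction from \cite{Calcines-Navnath}: since $X$ is normal, the shrinking lemma together with an inductive separation argument over $r=0,1,\dots,k+l$ produces an open cover $\{W_0,\dots,W_{k+l}\}$ of $X$ with, for each $r$, a decomposition $W_r=\bigsqcup_{a+b=r}W_{a,b}$ into pairwise disjoint open sets satisfying $W_{a,b}\subseteq U_a\cap V_b$. For fixed $r$ the pieces $W_{a,b}$ are disjoint and open, so $W_r$ is their topological coproduct; consequently the fibrewise homotopies built on each $U_a\cap V_b$, restricted to $W_{a,b}$, assemble to a single fibrewise homotopy on $W_r$ between any two of the maps $f_1,\dots,f_n,h_1,\dots,h_m$. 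Therefore $\{W_0,\dots,W_{k+l}\}$ witnesses $D_B(f_1,\dots,f_n,h_1,\dots,h_m)\le k+l$, which is exactly the asserted inequality.

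The amalgamation step — producing $\{W_0,\dots,W_{k+l}\}$ with genuinely disjoint pieces $W_{a,b}$ over each index $r$ (not merely with disjoint closures) — is the only substantial point; once it is in place, everything else is the routine verification that restricting and patching fibrewise homotopies remains within the fibrewise category. The secondary points to handle with care are that the ``bridge'' maps $g_{\sigma(i)}$ and $g_{\beta(j)}$ of the hypothesis genuinely appear among the maps of the two distances (so the homotopies on the $U_a$'s and on the $V_b$'s can be linked), and that the order in which the various concatenations are performed is immaterial, since within each list all maps are already pairwise fibrewise homotopic over $B$ on the relevant open set.
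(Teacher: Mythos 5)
Your proposal is correct and follows essentially the same route as the paper: both arguments take open covers realizing the two distances, link the homotopies on each intersection $U_a\cap V_b$ through the bridge $g_{\sigma(i)}\simeq_B g_{\beta(j)}$, and then invoke the normality-based cover-mixing lemma (Lemma 4.3 of Oprea--Strom, which the paper cites and you describe explicitly) to amalgamate the $(k+1)(l+1)$ intersections into a cover by $k+l+1$ open sets, each a disjoint union of pieces on which the homotopies patch. The only difference is that you spell out the content of the mixing lemma where the paper simply cites it.
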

\begin{proof}
    Let $D_B(f_1, \dots, f_n, g_{\sigma(1)}, \dots, g_{\sigma(s)})=r_1$ and $D_B(g_{\beta(1)}, \dots, g_{\beta(s')}, h_1, \dots, h_m)=r_2$. Hence, there exists an open covering $\mathcal{U}=\{U_0,\dots,U_{r_1}\}$ of $X$ such that for each $i=0,\dots,r_1$ \[f_1\vert_{U_i}\simeq_B \dots\simeq_B f_n\vert_{U_i}\simeq_B g_{\sigma(1)}\vert_{U_i}\simeq_B\dots\simeq_B g_{\sigma(s)}\vert_{U_i}.\] 
    Similarly, there exits an open covering $\mathcal{V}=\{V_0,\dots,V_{r_2}\}$ of $X$ such that  for $j=0,\dots,r_2$ \[g_{\beta(1)}\vert_{V_j}\simeq_B \dots\simeq_B g_{\beta(s')}\vert_{V_j}\simeq_B h_1\vert_{V_j}\simeq_B \dots \simeq_B h_m\vert_{V_j}.\] 

    Since $g_{\sigma(i)}\simeq_B g_{\beta(j)}$ for some $i,j$, it follows from \cite[Lemma 4.3]{oprea2011mixing} that there exits a third open cover $\mathcal{W}=\{W_0,\dots,W_{r_1+r_2}\}$ of $X$ such that $f_1\vert_{W_l}\simeq_B\dots\simeq_B f_n\vert_{W_l}\simeq_B g_1\vert_{W_l}\simeq_B\dots\simeq_B g_m\vert_{W_l}\simeq_B h_1\vert_{W_l}\simeq_B\dots\simeq_B h_m\vert_{W_l}$ for all $l=0,\dots,r_1+r_2$. This shows that \[D_B(f_1, \dots, f_n, h_1, \dots, h_m) 
\leq r_1+r_2.\]
\end{proof}
We now state and prove an additive-type inequality for the sequential parametrized homotopic distance under composition.

\begin{proposition} \label{triangle ineq}
Let $f_1, \ldots, f_r\colon  X\to Y$ and $g_1, \ldots, g_r \colon  Y \to Z$ be fibrewise maps over $B$.  
Suppose that $X$ is a normal space. Then
\[
D_B(g_1 \circ f_1, \ldots, g_r \circ f_r) \leq D_B(f_1, \ldots, f_r) + D_B(g_1, \ldots, g_r).
\]
\end{proposition}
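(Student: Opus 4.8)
The plan is to assemble, out of covers realizing $D_B(f_1,\dots,f_r)=p$ and $D_B(g_1,\dots,g_r)=q$, an open cover of $X$ of size $p+q+1$ on which all the composites $g_s\circ f_s$ become mutually fibrewise homotopic; the whole point will be to pass from the obvious cover of size $(p+1)(q+1)$ to one of size $p+q+1$, and that is where normality of $X$ enters via the mixing lemma \cite[Lemma~4.3]{oprea2011mixing}, exactly as in the proof of Theorem~\ref{sub-additive}. I may assume $p,q<\infty$, the inequality being vacuous otherwise. First I would fix an open cover $\{A_0,\dots,A_p\}$ of $X$ with $f_s|_{A_i}\simeq_B f_t|_{A_i}$ for all $i$ and all $s,t$, and an open cover $\{B_0,\dots,B_q\}$ of $Y$ with $g_s|_{B_j}\simeq_B g_t|_{B_j}$ for all $j$ and all $s,t$. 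I set $\phi_s:=g_s\circ f_s$ and $\psi_s:=g_s\circ f_1$, and I pull the second cover back to the open cover $\mathcal{B}'=\{f_1^{-1}(B_0),\dots,f_1^{-1}(B_q)\}$ of $X$.

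The next step is to record homotopies over the intersections $A_i\cap f_1^{-1}(B_j)$. On $A_i$ one has $f_s\simeq_B f_1$; composing such a fibrewise homotopy with $g_s$ gives $\phi_s|_{A_i}\simeq_B\psi_s|_{A_i}$ for every $s$. On $f_1^{-1}(B_j)$, since $f_1$ maps this set into $B_j$, precomposing the fibrewise homotopy $g_s|_{B_j}\simeq_B g_t|_{B_j}$ with the fibrewise map $I_B(f_1^{-1}(B_j))\to I_B(B_j)$ induced by $f_1$ gives $\psi_s|_{f_1^{-1}(B_j)}\simeq_B\psi_t|_{f_1^{-1}(B_j)}$ for all $s,t$. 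Chaining these two families over $A_i\cap f_1^{-1}(B_j)$ yields $\phi_s\simeq_B\psi_s\simeq_B\psi_t\simeq_B\phi_t$ there, so all the $\phi_s$ are mutually fibrewise homotopic on each such intersection. Nothing beyond composition of fibrewise homotopies with fibrewise maps is needed here.

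Finally, since $X$ is normal, I would invoke \cite[Lemma~4.3]{oprea2011mixing} for the two covers $\{A_i\}$ and $\mathcal{B}'$ to obtain an open cover $\{W_0,\dots,W_{p+q}\}$ of $X$ in which each $W_l$ is a disjoint union of open sets, each contained in some $A_i\cap f_1^{-1}(B_j)$. On each such piece all the $\phi_s$ are mutually fibrewise homotopic by the previous step, and since the pieces are disjoint---hence clopen in $W_l$---the corresponding homotopies into $Z$ glue to fibrewise homotopies $\phi_s|_{W_l}\simeq_B\phi_t|_{W_l}$ for all $s,t$. Thus $\{W_0,\dots,W_{p+q}\}$ witnesses $D_B(g_1\circ f_1,\dots,g_r\circ f_r)\le p+q=D_B(f_1,\dots,f_r)+D_B(g_1,\dots,g_r)$. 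The only genuinely delicate step is this last one: intersecting the two covers directly already gives the cover $\{A_i\cap f_1^{-1}(B_j)\}$ of size $(p+1)(q+1)$, hence only the multiplicative bound $D_B(g_1\circ f_1,\dots,g_r\circ f_r)\le (p+1)(q+1)-1$, so the substance of the proposition is the improvement to the additive bound, and I expect the care to go into checking that the cited form of the mixing lemma really delivers members subordinate, in the disjoint-union sense, to both covers, since that disjointness is exactly what makes the glued homotopies continuous.
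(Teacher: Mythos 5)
Your argument is correct and follows the same overall strategy as the paper: take a cover of $X$ adapted to the $f_s$, pull back a cover of $Y$ adapted to the $g_s$, chain fibrewise homotopies on the pairwise intersections, and then invoke the mixing lemma \cite[Lemma~4.3]{oprea2011mixing} on the normal space $X$ to trade the multiplicative bound for the additive one. The one substantive difference lies in how the cover of $Y$ is transported to $X$, and here your choice is the better one. The paper sets $\widetilde{V}_j := \bigcap_{i=1}^r f_i^{-1}(V_j)$ and asserts that $\{\widetilde{V}_j\}_j$ covers $X$; this is not automatic, since for a given $x$ the points $f_1(x),\dots,f_r(x)$ need not lie in a common $V_j$ (already for $r=2$, $f_1=\mathrm{id}$ and $f_2$ the antipodal map on $S^1$ with the two standard arcs, the sets $\widetilde{V}_j$ miss the poles). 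You instead pull back through the single map $f_1$, so $\{f_1^{-1}(B_j)\}_j$ is genuinely an open cover, and you compensate by chaining through the hybrid composites $\psi_s=g_s\circ f_1$: on $A_i\cap f_1^{-1}(B_j)$ you get $g_s\circ f_s\simeq_B g_s\circ f_1\simeq_B g_t\circ f_1\simeq_B g_t\circ f_t$. This is exactly the mechanism that makes the classical (non-parametrized) composition inequality for homotopic distance work, and it transfers to the fibrewise setting without further hypotheses. Your closing remark about why the mixing lemma's disjoint-union form is what permits gluing the homotopies is also the right point to emphasize; the paper leaves this implicit.
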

\begin{proof}
Suppose that $D_B(f_1,\dots,f_r)=m$ and $D_B(g_1,\dots,g_r)=n$. Then there exists an open cover $\{U_i\}_{i=0}^m$ of $X$ such that $f_1|_{U_i} \simeq_B \cdots \simeq_B f_r|_{U_i}$ holds for each  $i=0,\dots, m$,
and an open cover $\{V_j\}_{j=0}^n$ of $Y$ such that $g_1|_{V_j} \simeq_B \cdots \simeq_B g_r|_{V_j}$ holds for each  $j=0,\dots,n$.

From the above, it follows that on each $U_i$,
\[
(g_s \circ f_s)|_{U_i} \simeq_B (g_s \circ f_t)|_{U_i}, \quad 1 \leq s,t \leq r.
\]
Next, for each $j=0,\dots,n$, define 
\[
\widetilde{V_j} := \bigcap_{i=1}^r f_i^{-1}(V_j).
\]
Clearly, $\{\widetilde{V_j}\}_{j=0}^n$ is an open cover of $X$. Moreover, for every $1 \leq s,t \leq r$ and each $j$,
\[
(g_s \circ f_s)|_{\widetilde{V_j}} \simeq_B (g_t \circ f_s)|_{\widetilde{V_j}}.
\]

By applying \cite[Lemma 4.3]{oprea2011mixing}, we obtain a refinement $\{W_k\}_{k=0}^{m+n}$ of the cover of $X$ such that, for all $1 \leq s,t \leq r$ and $k=0,\dots,m+n$, thus
$(g_s \circ f_s)|_{W_k} \simeq_B (g_t \circ f_s)|_{W_k} \simeq_B (g_t \circ f_t)|_{W_k}.$
Therefore,
$D_B(g_1 \circ f_1,\dots,g_r \circ f_r) \leq m+n.$
\end{proof}

Next, we describe how the sequential parametrized homotopic distance behaves under products of fibrewise maps. Let $f_1,\ldots,f_r \colon X \to Y$ and $g \colon \widetilde{X} \to \widetilde{Y}$ be fibrewise maps over the same base space $B$. 
One can form the natural products and define the fibrewise product map 
\[
f_i \times_B g \colon X \times_B \widetilde{X} \to Y \times_B \widetilde{Y}
\] over $B$.  
If $f_s|_U \simeq_B f_t|_U$ for some open subset $U \subseteq X$ and for all $1 \leq s,t \leq r$, then  
\[
(f_s \times_B g)|_{(U \times \widetilde{X}) \cap (X \times_B \widetilde{X})} 
\simeq_B 
(f_t \times_B g)|_{(U \times \widetilde{X}) \cap (X \times_B \widetilde{X})}.
\]  
It follows that
\[
D_B(f_1 \times_B g, \ldots, f_r \times_B g) \leq D_B(f_1, \ldots, f_r).
\]
A symmetric argument gives
$D_B(g \times_B f_1, \ldots, g \times_B f_r) \leq D_B(f_1, \ldots, f_r).$
Motivated by the previous observations on fibrewise products with a fixed map, we state a corresponding inequality for fibrewise products of multiple maps.

\begin{proposition}\label{D_B_on_product}
    Let $f_1,\dots,f_r\colon  X\to Y$ and $g_1,\dots,g_r\colon  \widetilde{X}\to \widetilde{Y}$ be fibrewise maps over $B$. If $X\times_B \widetilde{X}$ is a normal space, then
    \[D_B(f_1\times_B g_1,\dots,f_r\times_B g_r)\leq D_B(f_1,\dots,f_r)+ D_B(g_1,\dots,g_r).\]
\end{proposition}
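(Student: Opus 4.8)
The plan is to follow the template of the proof of Proposition~\ref{triangle ineq}, replacing composition by fibrewise product, and to fuse the two given open covers using the mixing lemma \cite[Lemma 4.3]{oprea2011mixing}, whose hypothesis is exactly the normality of $X\times_B\widetilde{X}$. First I would set $D_B(f_1,\dots,f_r)=m$ and $D_B(g_1,\dots,g_r)=n$, and choose open covers $\{U_0,\dots,U_m\}$ of $X$ with $f_s|_{U_i}\simeq_B f_t|_{U_i}$ for all $s,t$ and all $i$, and $\{V_0,\dots,V_n\}$ of $\widetilde{X}$ with $g_s|_{V_j}\simeq_B g_t|_{V_j}$ for all $s,t$ and all $j$. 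From these I form the two open covers $\{\widetilde{U}_i:=U_i\times_B\widetilde{X}\}_{i=0}^m$ and $\{\widetilde{V}_j:=X\times_B V_j\}_{j=0}^n$ of $X\times_B\widetilde{X}$ (each is a cover since fibrewise product commutes with unions in the relevant factor).

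Next I would observe that a fibrewise homotopy $H\colon I_B(U_i)\to Y$ realizing $f_s|_{U_i}\simeq_B f_t|_{U_i}$ gives, on taking the fibrewise product with a fixed $g_k$ on the second coordinate, a fibrewise homotopy $(f_s\times_B g_k)|_{\widetilde{U}_i}\simeq_B(f_t\times_B g_k)|_{\widetilde{U}_i}$; specialising $k=s$ yields $(f_s\times_B g_s)|_{\widetilde{U}_i}\simeq_B(f_t\times_B g_s)|_{\widetilde{U}_i}$ for all $s,t$. Symmetrically, $(f_s\times_B g_s)|_{\widetilde{V}_j}\simeq_B(f_s\times_B g_t)|_{\widetilde{V}_j}$ for all $s,t$ and all $j$. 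Thus for the maps $f_1\times_B g_1,\dots,f_r\times_B g_r$ on $X\times_B\widetilde{X}$ we are in precisely the configuration treated in Proposition~\ref{triangle ineq}: on the cover $\{\widetilde{U}_i\}$ we can slide $f_s\times_B g_s$ to $f_t\times_B g_s$ (changing only the first factor), and on the cover $\{\widetilde{V}_j\}$ we can slide $f_s\times_B g_s$ to $f_s\times_B g_t$ (changing only the second factor).

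The final step is to apply \cite[Lemma 4.3]{oprea2011mixing} to these two covers of the normal space $X\times_B\widetilde{X}$, producing a common refinement $\{W_0,\dots,W_{m+n}\}$ on which, for all $1\le s,t\le r$ and all $k$, one has $(f_s\times_B g_s)|_{W_k}\simeq_B(f_t\times_B g_s)|_{W_k}\simeq_B(f_t\times_B g_t)|_{W_k}$. Hence $f_1\times_B g_1,\dots,f_r\times_B g_r$ are mutually fibrewise homotopic on each $W_k$, so $D_B(f_1\times_B g_1,\dots,f_r\times_B g_r)\le m+n$, which is the claimed inequality.

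The only real obstacle is bookkeeping: checking that the fibrewise product of a homotopy over $U_i$ with a fixed map $g_k$ actually takes values in $Y\times_B\widetilde{Y}$ and is fibrewise over $B$ (which uses that $(u,x)\in X\times_B\widetilde{X}$ forces $p_X(u)=p_{\widetilde{X}}(x)$, matched against $p_Y\circ H=p_X\circ pr_1$ and $p_{\widetilde{Y}}\circ g_k=p_{\widetilde{X}}$), and confirming that the local-homotopy hypotheses of \cite[Lemma 4.3]{oprea2011mixing} are met in the fibrewise setting exactly as in the proofs of Theorem~\ref{sub-additive} and Proposition~\ref{triangle ineq}. No genuinely new idea beyond those proofs is required.
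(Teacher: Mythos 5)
Your proposal is correct and follows essentially the same route as the paper: both use the two product covers $\{U_i\times_B\widetilde{X}\}$ and $\{X\times_B V_j\}$ of the normal space $X\times_B\widetilde{X}$ and fuse them via \cite[Lemma 4.3]{oprea2011mixing} into a cover of cardinality $m+n+1$. The only (cosmetic) difference is in the final step: the paper first replaces the $f_s$ (resp.\ $g_s$) by homotopies to a common local map $f$ (resp.\ $g$) and runs the two coordinate homotopies simultaneously to reach $f\times_B g$, whereas you concatenate $f_s\times_B g_s\simeq_B f_t\times_B g_s\simeq_B f_t\times_B g_t$ directly, which works equally well.
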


Before proving this proposition, we make the following observation. If $D_B(f_1,\dots,f_r)=n$, then there exists an open covering $\{U_i\}_{i=0}^n$ of $X$ such that on each $U_i$ there are fibrewise homotopies  
\[
H_1'\colon  f_1|_{U_i}\simeq_B f_2|_{U_i}, \;\dots,\; H_{r-1}'\colon  f_{r-1}|_{U_i}\simeq_B f_r|_{U_i}.
\]  
Fix one of these fibrewise homotopies, say $H_1'$, and define $f(x):=H_1'(x,\tfrac{1}{2})$. This gives rise to new homotopies  
\[
H_1\colon  f_1|_{U_i}\simeq_B f|_{U_i}, \;\dots,\; H_r\colon  f_r|_{U_i}\simeq_B f|_{U_i}.
\]  
Consequently, $D_B(f_1,\dots,f_r)=n$ implies the existence of an open covering $\{U_i\}_{i=0}^n$ of $X$ and a fixed map $f\colon  X\to Y$ such that  
$f_s|_{U_i}\simeq_B f|_{U_i}$, for all $s=1,\dots,r.$

\begin{proof}[Proof of Proposition~\ref{D_B_on_product}]
   Let $D_B(f_1,\dots,f_r)=n$ and $D_B(g_1,\dots,g_r)=m$. Then, by the previous discussion, there exists an open covering $\{U_i\}_{i=0}^n$ of $X$ and a fixed map $f\colon  X\to Y$ such that $f_s|_{U_i}\simeq_B f|_{U_i}$ for all $s=1,\dots,r$ and $i=0,\dots,n$. Similarly, there exists an open covering $\{V_j\}_{j=0}^m$ of $\widetilde{X}$ and a fixed map $g\colon  \widetilde{X}\to \widetilde{Y}$ such that $g_s|_{V_j}\simeq_B g|_{V_j}$ for all $s=1,\dots,r$ and $j=0,\dots,m$. For all $s=1,\dots,r$, the following properties hold:\\
   \textbf{Property A:}
On each $U_i\times_B \widetilde{X}$, there exists fibrewsie homotopies $f_s\times_B Id_{\widetilde{X}}\simeq_B f\times_B Id_{\widetilde{X}}$.\\
\textbf{Property B:}
On each $X\times_B V_j$, there exists fibrewise homotopies $Id_X\times_B g_s\simeq_B Id_X\times_B g$.

Note that the covering $\{U_i\times_B \widetilde{X}\}_{i=0}^n$ of $X\times_B \widetilde{X}$ satisfies Property A. Also $\{X\times_B V_j\}_{j=0}^{m}$ covers $X\times_B \widetilde{X}$ satisfying the Property B. Since $X\times_B \widetilde{X}$ is a normal space, it follows from \cite[Lemma 4.3]{oprea2011mixing} that there exists another open cover $\{W_i\}_{i=0}^{n+m}$ of $X\times_B \widetilde{X}$ satisfying both Property A and Property B. Hence for each $s=0,\dots,n+m$, we have fibrewise homotopies \[F_s\colon  I_B(W_s)\to Y\times_B \widetilde{Y} \text{ and } G_s\colon  I_B(W_s)\to Y\times_B \widetilde{Y}\] such that $F_s\colon  f_s\times_B Id_{\widetilde{X}}\simeq_B f\times_B Id_{\widetilde{X}} \text{ and } G_s\colon  Id_X\times_B g_s\simeq_B Id_X\times_B g$ respectively.
For each $s=0,1,\dots,r$, we define $H_s\colon  I_B(W_s)\to Y\times_B \widetilde{Y}$ by \[H_s(w,t)=\left(pr_1\circ F_s(w,t), pr_2\circ G_s(w,t)\right),\] where $pr_1\colon  Y\times_B \widetilde{Y}\to Y$ and $pr_2\colon  Y\times_B \widetilde{Y}\to \widetilde{Y}$ are the projection maps. Clearly, this is a fibrewise map over $B$ and it satisfies \[H_s(w,0)=(f_s\times_B g_s)(w) \text{ and } H_s(w,1)=(f\times_B g)(w).\] Thus, $D_B(f_1\times_B g_1,\dots, f_r\times_B g_r)\leq n+m$. This completes the proof. 
\end{proof}

\begin{remark}  
Corollary~\ref{parametrized_TC_on_product} can be recovered as a special case of Proposition~\ref{D_B_on_product}. Indeed, consider \(f_i = pr_i\colon  E_B^r \to E\) and \(g_i = pr_i\colon  \widetilde{E}_B^r \to \widetilde{E}\). Denote by \(Pr_i\colon  (E \times_B \widetilde{E})_B^r \to E \times_B \widetilde{E}\) the projection map. Then, applying Proposition~\ref{D_B_on_product} along with the commutative diagram  
  \[\begin{tikzcd}
	{(E\times_B \widetilde{E})_B^r} && {E_B^r\times_B \widetilde{E}_B^r} \\
	& {E\times_B \widetilde{E}}
	\arrow["{\cong }", from=1-1, to=1-3]
	\arrow["{Pr_i}"', from=1-1, to=2-2]
	\arrow["pr_i\times_{B} pr_i",from=1-3, to=2-2]
\end{tikzcd}\]
for \(1 \le i \le r\), one recovers the statement of the corollary.  
\end{remark}

\subsection{Fibrewise maps between fibrewise fibrations}\label{seq PHD fibrewise fibration}
This subsection is devoted to describing bounds for the sequential parametrized homotopic distance of fibre-preserving maps between fibrewise fibrations in terms of the sequential homotopic distance on individual fibres and the fibrewise unpointed LS category of the base.

\begin{theorem}\label{thm: fibrewise-fibration}
    Consider fibrewise pointed spaces $X$ and $X'$ over $B$, together with fibrewise fibrations 
    $\pi\colon E \to X$ and $\pi'\colon E' \to X'$ whose fibres are $F$ and $F'$, respectively.  
    For $i=1,\dots,r$, suppose $f_i \colon E \to E'$ are fibrewise maps, and denote by 
    $f_i^{0} = f_i|_{F} \colon F \to F'$ the induced maps on the fibres. Then
    \[
        D_B(f_1,\dots,f_r)+1
        \leq \bigl(D_B(f_1^{0},\dots,f_r^{0}) + 1\bigr) \cdot \bigl(\ct_B^*(X) + 1\bigr).
    \]
\end{theorem}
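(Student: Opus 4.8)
The plan is to adapt the classical ``fibration theorem'' for Lusternik--Schnirelmann category (and its homotopic-distance analogue) to the sequential parametrized setting, combining an open cover of the base $X$ witnessing $\ct_B^*(X)$ with an open cover of the fibre $F'$ witnessing $D_B(f_1^0,\dots,f_r^0)$, then pulling the latter back through $\pi'$ and intersecting. First I would fix optimal covers: let $a=\ct_B^*(X)$ with open cover $\{V_0,\dots,V_a\}$ of $X$ on which $i_{V_j}\simeq_B s_X\circ p_X\vert_{V_j}$, and let $b=D_B(f_1^0,\dots,f_r^0)$ with open cover $\{W_0,\dots,W_b\}$ of $F'$ on which all the $f_s^0\vert_{W_k}$ are fibrewise homotopic to one another. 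The target set count is then $(a+1)(b+1)$, matching the right-hand side, so I would index the new cover of $E$ by pairs $(j,k)$.

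The key step is to produce, for each $j$, a suitable fibrewise homotopy equivalence between $\pi^{-1}(V_j)$ and a product-like piece $V_j\times_B F$ (or at least a fibrewise map $\pi^{-1}(V_j)\to F$ covering the contraction of $V_j$ into the section), using that $\pi$ is a fibrewise fibration and that $V_j$ fibrewise deformation-retracts over $B$ onto the image of $s_X$. Applying the fibrewise homotopy lifting property of $\pi$ to this deformation produces a fibrewise homotopy $\Phi_j\colon I_B(\pi^{-1}(V_j))\to E$ whose time-$1$ map $r_j\colon \pi^{-1}(V_j)\to \pi^{-1}(s_X(B))\cong F$ lands in the fibre. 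Then for each pair $(j,k)$ I would set
\[
U_{j,k} := \pi^{-1}(V_j)\cap (f_1\circ\cdots)^{-1}\bigl(\text{something}\bigr),
\]
more precisely $U_{j,k}:=\pi^{-1}(V_j)\cap r_j^{-1}\bigl((f_1^0)^{-1}(W_k)\bigr)$ after checking that the $f_i$ are compatible with $r_j$ up to fibrewise homotopy — this is where one must be slightly careful, since $f_i$ need not strictly respect the retraction, only up to a homotopy supplied by $\Phi_j$ composed with $f_i$ and a corresponding deformation on $E'$. On $U_{j,k}$ one chains: $f_s\vert_{U_{j,k}}\simeq_B f_s\circ (\text{incl}\circ r_j)\vert_{U_{j,k}}$ (via $f_i\circ\Phi_j$) $= f_s^0\circ r_j\vert_{U_{j,k}}\simeq_B f_t^0\circ r_j\vert_{U_{j,k}}$ (since $r_j(U_{j,k})\subseteq W_k$ and the $f_s^0$ agree there) $\simeq_B f_t\vert_{U_{j,k}}$, so all $r$ maps are fibrewise homotopic on $U_{j,k}$.

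Finally I would check that $\{U_{j,k}\}_{0\le j\le a,\,0\le k\le b}$ actually covers $E$: given $e\in E$, pick $j$ with $\pi(e)\in V_j$, so $e\in\pi^{-1}(V_j)$ and $r_j(e)\in F$; then pick $k$ with $r_j(e)\in W_k$, giving $e\in U_{j,k}$. Counting, $E$ is covered by at most $(a+1)(b+1)$ open sets on which $f_1,\dots,f_r$ are fibrewise homotopic, hence $D_B(f_1,\dots,f_r)+1\le (a+1)(b+1)=(D_B(f_1^0,\dots,f_r^0)+1)(\ct_B^*(X)+1)$. The main obstacle I anticipate is the bookkeeping in the step ``$f_s\vert_{U_{j,k}}\simeq_B f_s^0\circ r_j\vert_{U_{j,k}}$'': one needs the fibrewise homotopies produced by lifting the contraction of $V_j$ to be genuinely \emph{fibrewise} (respecting $p_E$ and $p_{E'}$) and to interact correctly with the fibre-restricted maps $f_i^0$; handling this cleanly likely requires phrasing the contraction of $V_j$ as a fibrewise homotopy $H_j\colon I_B(V_j)\to X$ from $i_{V_j}$ to $s_X\circ p_X\vert_{V_j}$, lifting $H_j\circ(\pi\vert_{\pi^{-1}(V_j)}\times \mathrm{id})$ through $\pi$ starting at the inclusion, and then composing with $f_i$ and an induced deformation on $E'$ over $X'$, all while keeping track of the section conditions guaranteed by the fibrewise \emph{pointed} hypothesis on $X$.
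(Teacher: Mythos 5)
Your proposal follows essentially the same route as the paper: lift the categorical contraction of each $V_j$ through the fibrewise fibration $\pi$ to obtain a time-one map $r_j\colon\pi^{-1}(V_j)\to F$, intersect $\pi^{-1}(V_j)$ with $r_j^{-1}$ of the fibre cover, and chain the three fibrewise homotopies $f_s\simeq_B f_s^0\circ r_j\simeq_B f_t^0\circ r_j\simeq_B f_t$ exactly as in the paper's concatenation $G_{i,j}^k$. The only blemish is a notational slip — the cover witnessing $D_B(f_1^0,\dots,f_r^0)$ lives in the domain $F$, not in $F'$, so the correct definition is $U_{j,k}=\pi^{-1}(V_j)\cap r_j^{-1}(W_k)$ rather than pulling back through $(f_1^0)^{-1}$ — and no deformation of $E'$ is actually needed.
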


\begin{proof}
Suppose $\ct_B^*(X)=n$ and $D_B(f_1^0,\dots,f_r^0)=m$.
Consider $\{U_0,\dots,U_n\}$ a fibrewise categorical open cover of $X$, and let $\{V_0,\dots,V_m\}$ be an open cover of $F$ such that $f_{k}^0|_{V_j}\simeq_B f_{k+1}^0|_{V_j}$ for $0\leq j\leq m$ and $1\leq k\leq r-1$. For each $0\leq j\leq m$ and $1\leq k\leq r-1$, denote this fibrewise homotopy by $F_j^k\colon I_B(V_j)\to F'$.

For a fibrewise categorical open subset $U$ of $X$, there exists a fibrewise homotopy $H \colon I_B(U)\to X$ with $H\colon i_{U}\simeq_B s_X\circ p_{X}|_{U}$. Define $U':=\pi^{-1}(U)$. Then, applying the fibrewise homotopy lifting property for $\pi$, we obtain a lifted homotopy $\tilde{H}\colon I_B(U')\to E$ that makes the following diagram commute in the category of fibrewise spaces:
$$\xymatrix{
{U'} \ar[d]_{i_0} \ar@{^{(}->}@<-2pt>[rrr] & & &  {E} \ar[d]^{\pi } \\
{I_B(U')} \ar@{.>}[urrr]^{\tilde{H}} \ar[rr]_{I_B(\pi )} & & {I_B(U)} \ar[r]_(.6){H} & {X}.
}$$
Since $\pi\circ \tilde{H} =H\circ I_B(\pi )$, we have $\pi (\tilde{H}(x,1))=H(\pi(x),1)=(s_X\circ p_X)(\pi(x))\in s_X(B).$ In other words,
    $\tilde{H}(x,1)\in \pi^{-1}(s_X(B))=F$.
This defines a fibrewise map $\tilde{H}_{1}:=\tilde{H}(-,1)\colon U'\to F$.
We denote this map by $\tilde{H}_{i, 1}$ associated to such an open subset $U_i$.

Now, for $i\in \{0,1,\dots, n\}$ and $j\in \{0,1,\dots, m\}$, we consider the open set $W_{i,j}=U_i'\cap V_j'$ in $E$, where $U'_i= \pi^{-1}(U_i)$ and $V_j'=\tilde{H}_{i,1}^{-1}(V_j)$. Clearly, the collection $\{W_{i,j}\mid 0\leq i\leq n,\ 0\leq j\leq m\}$ forms an open cover of $E$.

It remains to verify that \( f_k|_{W_{i,j}} \simeq_B f_{k+1}|_{W_{i,j}} \) for all \( i\in\{0,1,\dots,n\} \), \( j\in\{0,1,\dots,m\} \), and \( k\in\{1,\dots,r-1\} \). For this, we define a map
\[
G_{i,j}^k \colon I_B(W_{i,j}) \to E'
\]
by
\[
G_{i,j}^k(e,t)=
\begin{cases}
f_k\!\left(\tilde{H}(e,3t)\right), & 0\le t\le \frac13,\\[4pt]
F_j^k\!\left(\tilde{H}_{i,1}(e),\,3t-1\right), & \frac13\le t\le \frac23,\\[4pt]
f_{k+1}\!\left(\tilde{H}_i(e,3-3t)\right), & \frac23\le t\le 1.
\end{cases}
\]
This defines a fibrewise homotopy over \(B\) from \(f_k|_{W_{i,j}}\) to \(f_{k+1}|_{W_{i,j}}\), completing the proof.
\end{proof}

If $\pi\colon  E\to X$ be a fibrewise fibration between fibrewise pointed spaces $E$ and $X$ over $B$, then Garc\'ia-Calcines and the first author \cite[Corollary 6.3]{Calcines-Navnath} proved the inequality
\[\TC_B(E)+1\leq \left(\TC_B(F)+1\right). (\ct_B^*(X\times_B X)+1),\] where $F$ is the fibre of $\pi$.

The following corollary provides its sequential analogue.

\begin{corollary}\label{cor:cib-fibration2}
Let $E$ and $X$ be fibrewise pointed spaces over $B$ and let $\pi\colon E\to X$ be a fibrewise fibration with fibre $F$. Then
$$\TC_{B,r}(E)+1\leq (\TC_{B,r}(F)+1)\cdot (\ct_B^*(X_B^r)+1).$$
\end{corollary}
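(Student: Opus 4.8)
The plan is to deduce Corollary~\ref{cor:cib-fibration2} directly from Theorem~\ref{thm: fibrewise-fibration} by choosing the fibrewise maps to be the coordinate projections of fibred powers. Concretely, given the fibrewise fibration $\pi\colon E\to X$ with fibre $F$, I would apply the $r$-fold fibred power functor to obtain $\pi_B^r\colon E_B^r\to X_B^r$, which is again a fibrewise fibration (a fibred product of fibrewise fibrations over $B$), and whose fibre is $F_B^r$ — here one uses that fibred products commute appropriately, so that $(\pi_B^r)^{-1}(s_{X_B^r}(B))=F_B^r$, where $E_B^r$ and $X_B^r$ carry the natural fibrewise pointed structures induced by $s_E$ and $s_X$. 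Then I would take the fibrewise maps $f_i:=pr_i\colon E_B^r\to E$ for $i=1,\dots,r$ in the statement of Theorem~\ref{thm: fibrewise-fibration}, with $X$ there replaced by $X_B^r$ and $E'$ by $E$ (so $X'=X$, $\pi'=\pi$).

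The key computation is then to identify the two sequential parametrized homotopic distances appearing in the conclusion of Theorem~\ref{thm: fibrewise-fibration}. On the total space side, Corollary~\ref{homotopy_distance_to_TC} gives $D_B(pr_1,\dots,pr_r)=\TC_{B,r}(E_B^?)$ — more precisely, with $pr_i\colon E_B^r\to E$ the fibred-power projections we have $D_B(pr_1,\dots,pr_r)=\TC_{B,r}(E)$ by Corollary~\ref{homotopy_distance_to_TC}. On the fibre side, the induced maps $f_i^0=pr_i|_{F_B^r}\colon F_B^r\to F$ are exactly the fibred-power projections for $F$, so again Corollary~\ref{homotopy_distance_to_TC} yields $D_B(f_1^0,\dots,f_r^0)=\TC_{B,r}(F)$. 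Substituting these identifications, together with $\ct_B^*(X)$ in the theorem now being $\ct_B^*(X_B^r)$, the inequality of Theorem~\ref{thm: fibrewise-fibration} becomes precisely
\[
\TC_{B,r}(E)+1\leq \bigl(\TC_{B,r}(F)+1\bigr)\cdot\bigl(\ct_B^*(X_B^r)+1\bigr),
\]
which is the assertion.

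I expect the only genuine point requiring care — the main (if modest) obstacle — is verifying that $\pi_B^r\colon E_B^r\to X_B^r$ really is a fibrewise fibration with fibre canonically $F_B^r$, and that the fibrewise pointed structures match up so that Theorem~\ref{thm: fibrewise-fibration} applies verbatim; this is a routine check using that fibred products over $B$ of fibrewise fibrations are fibrewise fibrations and that $s_{X_B^r}=(s_X,\dots,s_X)$ pulls back along $\pi_B^r$ to $(s_E,\dots,s_E)=s_{E_B^r}$, whose preimage is $F_B^r$. Everything else is a direct substitution into the already-established theorem, so no further work is needed.
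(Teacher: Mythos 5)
Your proposal is correct and follows essentially the same route as the paper: apply Theorem~\ref{thm: fibrewise-fibration} to the fibrewise fibration $\pi_B^r\colon E_B^r\to X_B^r$ (with fibre $F_B^r$) and the maps $f_i=pr_i\colon E_B^r\to E$ over $\pi\colon E\to X$, then identify both homotopic distances as sequential parametrized topological complexities via Corollary~\ref{homotopy_distance_to_TC}. The only difference is that you make explicit the routine verification that $\pi_B^r$ is a fibrewise fibration with fibre $F_B^r$ and that the pointed structures match, which the paper leaves implicit.
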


\begin{proof}
We note that the projection maps $pr_i\colon  X_B^r\to X$ satisfy the following commutative diagram:
\[\begin{tikzcd}
	{E_B^r} &&& E \\
	{X_B^r} &&& {X.}
	\arrow["{pr_1,\dots,pr_r}", from=1-1, to=1-4]
	\arrow["{\pi^r_B}"', from=1-1, to=2-1]
	\arrow["\pi", from=1-4, to=2-4]
	\arrow["{pr_1,\dots,pr_r}"', from=2-1, to=2-4]
\end{tikzcd}\]
Observe that $f_i^0=pr_i\colon F_B^r\to F$. Now, the desired inequality follows from Theorem \ref{thm: fibrewise-fibration}.
\end{proof}

\section{Sequential  pointed parametrized homotopic distance and its comparison with unpointed version} \label{seq Pointed PHD}
In this section, we introduce the pointed version of the sequential parametrized homotopic distance and compare it with its unpointed counterpart. We begin with the definition of the sequential parametrized pointed homotopic distance.

\subsection{Sequential  pointed parametrized homotopic distance}
\begin{definition}\label{SPPHD}
    Let $f_1,\dots,f_r\colon  X \to Y$ be fibrewise pointed maps between fibrewise pointed spaces $X$ and $Y$ over $B$. The sequential parametrized pointed homotopic distance $D_B^B(f_1,\dots,f_r)$ is the least integer $n \geq 0$ for which there exists an open cover $\{U_0,\dots,U_n\}$ of $X$ such that 
    \[
    s_X(B) \subset U_i \quad \text{and} \quad f_1\vert_{U_i} \simeq_B^B \cdots \simeq_B^B f_r\vert_{U_i},\text{ for all } 0 \leq i \leq n.
    \]
    If no such cover exists, we set $D_B^B(f_1,\dots,f_r) = \infty$.
\end{definition}
It follows from the Definition~\ref{SPPHD} that
\begin{enumerate}
    \item $D_B^B(f_1,\dots,f_r)=D_B^B(f_{\sigma(1)},\dots,f_{\sigma(r)})$ for any permutation $\sigma$ of $\{1,\dots,r\}$.
    \item $D_B^B(f_1,\dots,f_r)=0$ if and only if $f_i\simeq_B^B f_{i+1}$ for each $i\in \{1,\dots,r-1\}$.
    \item If $f_i\simeq_B^B f_i'$ for each $i=1,2,\dots,r$, then $D_B^B(f_1,\dots,f_r)= D_B^B(f_1',\dots,f_r')$.
\end{enumerate}
As in the unpointed case, the sequential parametrized pointed homotopic distance can be related to the fibrewise pointed sectional category. Before establishing this relation, we note that if $f_1,\dots,f_r \colon X \to Y$ are fibrewise pointed maps over $B$, then, as in the diagram \eqref{Pullback_Diagram}, one obtains a map
\[
\widetilde{\Pi}_{r,Y}^B \colon \mathcal{P}_B^B(f_1,\dots,f_r) \to X,
\]
defined as the pullback of $\Pi_{r,Y}^B \colon P_B(Y) \to Y_B^{\,r}$ along $(f_1,\dots,f_r)$. Since $\Pi_{r,Y}^B$ is a fibrewise pointed fibration, the map $\widetilde{\Pi}_{r,Y}^B$ is also a fibrewise pointed fibration.

In analogy with the unpointed version of sequential parametrized homotopic distance, the pointed version can also be expressed as the fibrewise pointed sectional category of $\widetilde{\Pi}_{r,Y}^B$, as stated below.

\begin{proposition}\label{comparision}
    Let $f_1,\dots,f_r\colon  X\to Y$ be fibrewise pointed maps between fibrewise pointed spaces $X$ and $Y$ over $B$. Then \[D_B^B(f_1,\dots,f_r)= \sct_B^B(\widetilde{\Pi}_{r,Y}^B),\] where $\sct_B^B(\widetilde{\Pi}_{r,Y}^B)$ denotes the fibrewise pointed sectional category of a fibrewise pointed map $\widetilde{\Pi}_{r,Y}^B$ introduced in \cite[Definition 3.5]{GC}.
\end{proposition}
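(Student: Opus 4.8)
The plan is to transport the proof of Proposition~\ref{homotopic_distance_and_secat} into the fibrewise pointed category, checking at each stage that the maps and homotopies in play respect the chosen sections. Throughout one uses that $\Pi_{r,Y}^B\colon P_B^B(Y)\to Y_B^{\,r}$ is a fibrewise pointed fibration and that its pullback $\widetilde{\Pi}_{r,Y}^B$ along the fibrewise pointed map $(f_1,\dots,f_r)$ is again a fibrewise pointed fibration; the total space $\mathcal{P}_B^B(f_1,\dots,f_r)$ carries the canonical section induced, via the universal property of the pullback, by $s_X$ and the section $(id_B,c\circ s_Y)$ of $P_B^B(Y)$, which agree over $Y_B^{\,r}$ precisely because each $f_i$ is fibrewise pointed (so $(f_1,\dots,f_r)\circ s_X=\Pi_{r,Y}^B\circ(id_B,c\circ s_Y)$).

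For the inequality $\sct_B^B(\widetilde{\Pi}_{r,Y}^B)\le D_B^B(f_1,\dots,f_r)$ I would start from an open $U\ni s_X(B)$ together with fibrewise pointed homotopies $f_k|_U\simeq_B^B f_{k+1}|_U$ for $1\le k\le r-1$, and concatenate them exactly as in the unpointed argument. A concatenation and reparametrization of section-preserving homotopies is again section-preserving, so this yields a fibrewise pointed homotopy $H\colon I_B^B(U)\to Y$ with $H(u,\tfrac{i}{r-1})=f_{i+1}(u)$. Its adjoint $u\mapsto\bigl(p_X(u),\,H(u,-)\bigr)$ lands in $P_B(Y)$ because $H$ is fibrewise, and the pointedness condition $H(s_X(b),t)=s_Y(b)$ forces it to land in the pointed cocylinder $P_B^B(Y)$, where moreover $\Pi_{r,Y}^B\circ(\text{adjoint})=(f_1,\dots,f_r)|_U$ on the nose. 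The universal property of the pullback then produces a fibrewise map $\sigma\colon U\to\mathcal{P}_B^B(f_1,\dots,f_r)$ with $\widetilde{\Pi}_{r,Y}^B\circ\sigma=i_U$; comparing $\sigma\circ s_X$ with the canonical section of $\mathcal{P}_B^B(f_1,\dots,f_r)$ via the uniqueness clause of that universal property shows $\sigma$ is fibrewise pointed, hence a fibrewise pointed homotopy section. Running this over an optimal pointed cover realizing $D_B^B(f_1,\dots,f_r)$ gives the inequality.

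For the reverse inequality, let $\sigma\colon U\to\mathcal{P}_B^B(f_1,\dots,f_r)$ be a fibrewise pointed homotopy section of $\widetilde{\Pi}_{r,Y}^B$ over an open $U$ (necessarily containing $s_X(B)$, since $\sigma$ is fibrewise pointed), and fix a pointed homotopy $K\colon \widetilde{\Pi}_{r,Y}^B\circ\sigma\simeq_B^B i_U$. Composing $\sigma$ with the canonical fibrewise pointed projection $\mathcal{P}_B^B(f_1,\dots,f_r)\to P_B^B(Y)$ and passing to adjoints yields a fibrewise pointed homotopy $\Gamma\colon I_B^B(U)\to Y$ whose value at time $\tfrac{i}{r-1}$ equals $f_{i+1}\circ(\widetilde{\Pi}_{r,Y}^B\circ\sigma)$, which is $\simeq_B^B f_{i+1}|_U$ via $f_{i+1}\circ K$. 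Restricting $\Gamma$ to consecutive subintervals and splicing with the homotopies $f_{i+1}\circ K$ produces $f_s|_U\simeq_B^B f_t|_U$ for all $1\le s,t\le r$. Applying this to an optimal cover realizing $\sct_B^B(\widetilde{\Pi}_{r,Y}^B)$ completes the proof.

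The routine combinatorics of covers is identical to the unpointed case, so the only point deserving care — and the one I would write out in full — is the verification that pointedness is preserved throughout: that concatenation and reparametrization keep homotopies section-preserving, that the adjoint of a pointed homotopy factors through $P_B^B(Y)$ rather than merely $P_B(Y)$, and that the map obtained from the pullback's universal property is a fibrewise \emph{pointed} map and a \emph{pointed} homotopy section. Everything else follows the template of Proposition~\ref{homotopic_distance_and_secat} verbatim.
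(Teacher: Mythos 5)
Your proposal is correct and follows the same route as the paper, which simply declares the proof to be identical to that of Proposition~\ref{homotopic_distance_and_secat}; you have in fact supplied more detail than the paper by explicitly verifying that concatenation, adjunction, and the pullback's universal property all preserve pointedness. No gaps.
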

\begin{proof}
    The proof is identical to that of Proposition \ref{homotopic_distance_and_secat}.
\end{proof}
As a direct consequence of the Proposition~\ref{comparision}, we obtain the following.
\begin{corollary}
    Let $X$ be a fibrewise pointed space over $B$. Then \[D_B^B(pr_1,\dots,pr_r)= \TC_{B,r}^B(X).\] 
\end{corollary}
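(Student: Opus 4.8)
The plan is to read off this corollary from Proposition~\ref{comparision} by specializing the fibrewise pointed maps to the projections. Concretely, apply Proposition~\ref{comparision} with target $Y = X$ and with the $r$ fibrewise pointed maps taken to be $f_i = pr_i \colon X_B^r \to X$ (so that the space playing the role of the source in that proposition is the fibred product $X_B^r$, equipped with its diagonal section $s_{X_B^r}$). This immediately yields
\[
D_B^B(pr_1,\dots,pr_r) = \sct_B^B\bigl(\widetilde{\Pi}_{r,X}^B\bigr),
\]
where $\widetilde{\Pi}_{r,X}^B \colon \mathcal{P}_B^B(pr_1,\dots,pr_r) \to X_B^r$ is the pullback of the fibrewise pointed fibration $\Pi_{r,X}^B \colon P_B^B(X) \to X_B^r$ along the map $(pr_1,\dots,pr_r) \colon X_B^r \to X_B^r$.

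The decisive observation is that $(pr_1,\dots,pr_r)$ is precisely the identity map $Id_{X_B^r}$: its $i$-th coordinate is the projection onto the $i$-th factor, so it sends $(x_1,\dots,x_r)$ to $(x_1,\dots,x_r)$. Hence the defining pullback square for $\mathcal{P}_B^B(pr_1,\dots,pr_r)$ has the identity along its bottom edge, and by the universal property the canonical comparison map $\mathcal{P}_B^B(pr_1,\dots,pr_r) \to P_B^B(X)$ is a fibrewise pointed homeomorphism carrying $\widetilde{\Pi}_{r,X}^B$ to $\Pi_{r,X}^B$ and the induced section to the section $(id_B, c \circ s_X)$ of $P_B^B(X)$. (This is the same mechanism by which Corollary~\ref{homotopy_distance_to_TC} recovers $\TC_{B,r}(X)$ in the unpointed setting.)

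Finally, since the fibrewise pointed sectional category is invariant under fibrewise pointed homeomorphism — indeed under fibrewise pointed homotopy equivalence over the base — we get $\sct_B^B(\widetilde{\Pi}_{r,X}^B) = \sct_B^B(\Pi_{r,X}^B)$, and the latter is by definition $\TC_{B,r}^B(X)$. Combining these equalities gives $D_B^B(pr_1,\dots,pr_r) = \TC_{B,r}^B(X)$, as claimed. There is no genuine obstacle here; the only point that needs (routine) attention is checking that pulling back along the identity returns $\Pi_{r,X}^B$ \emph{together with} the correct section, so that the pointed structures on both sides of Proposition~\ref{comparision} genuinely match.
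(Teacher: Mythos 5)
Your proposal is correct and follows exactly the route the paper intends: the corollary is stated as a direct consequence of Proposition~\ref{comparision}, specializing to $f_i = pr_i$ and observing that $(pr_1,\dots,pr_r)=Id_{X_B^r}$, so the pullback $\widetilde{\Pi}_{r,X}^B$ coincides with $\Pi_{r,X}^B$ (a fact the paper itself uses later in the proof of the cohomological lower bound for $\TC_{B,r}^B$). Your extra care about the section matching under the identity pullback is a reasonable, routine check and does not change the argument.
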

Analogous to the unpointed case, one can define a numerical invariant, the fibrewise pointed LS category, which can be expressed in terms of the sequential parametrized homotopic distance. 

\begin{proposition}
    Let $X$ be a fibrewise pointed space over $B$. Then 
    \[
        D_B^B(i_1, \dots, i_r) = \ct_B^B(X_B^{r-1}).
    \]
\end{proposition}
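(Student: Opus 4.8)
The plan is to mimic the unpointed argument of Proposition~\ref{interpretation of LS category interms of SPHD} verbatim, but keeping track of basepoints throughout. Recall that the maps $i_j\colon X_B^{r-1}\to X_B^r$ of \eqref{inclusion} insert the section-value $s_X\circ p_{X^{r-1}}$ in the $j$-th slot. Since each $i_j$ sends $s_{X_B^{r-1}}(b)$ to $s_{X_B^r}(b)=(s_X(b),\dots,s_X(b))$, every $i_j$ is a fibrewise pointed map, so the quantity $D_B^B(i_1,\dots,i_r)$ is defined. On the other side, $\ct_B^B(X_B^{r-1})$ is the fibrewise pointed LS category in the sense of \cite{Morries-James}, which asks for an open cover $\{U_\ell\}$ of $X_B^{r-1}$ with $s_{X_B^{r-1}}(B)\subseteq U_\ell$ on each of which the inclusion is fibrewise pointed homotopic to $s_{X_B^{r-1}}\circ p_{X_B^{r-1}}|_{U_\ell}$.

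First I would prove $D_B^B(i_1,\dots,i_r)\le \ct_B^B(X_B^{r-1})$. Take a fibrewise pointed categorical open set $U\subseteq X_B^{r-1}$ with a fibrewise \emph{pointed} homotopy $H\colon I_B^B(U)\to X_B^{r-1}$ from $i_U$ to $s_{X_B^{r-1}}\circ p_{X_B^{r-1}}|_U$. Then reuse exactly the formula for $H'\colon I_B(U)\to X_B^r$ from the proof of Proposition~\ref{interpretation of LS category interms of SPHD} (concatenating $H$ run at double speed with its reverse, with a fixed section-value coordinate inserted in the appropriate slot at the midpoint), to produce a homotopy between $i_j|_U$ and $i_k|_U$. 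The new point is to check this $H'$ is a \emph{pointed} homotopy: when $u=s_{X_B^{r-1}}(b)$ we have $H(s_{X_B^{r-1}}(b),t)=s_{X_B^{r-1}}(b)$ for all $t$ (since $H$ is pointed), so every coordinate $t_l=pr_l\circ H(u,2t)$ and $w_l=pr_l\circ H(u,2-2t)$ equals $s_X(b)$, and the inserted coordinate is $s_X\circ p_X(pr_1(u))=s_X(b)$ too; hence $H'(s_{X_B^{r-1}}(b),t)=(s_X(b),\dots,s_X(b))=s_{X_B^r}(b)$, as required. Since $s_{X_B^{r-1}}(B)\subseteq U$ forces $s_{X_B^r}(B)\subseteq$ (the relevant open set in $X_B^r$)—note $U$ itself is the domain here, so the homotopy lives over $U\subseteq X_B^{r-1}$ and $D_B^B$ is computed on $X_B^{r-1}$—the same open cover works, giving the inequality.

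For the reverse inequality $\ct_B^B(X_B^{r-1})\le D_B^B(i_1,\dots,i_r)$, take an open set $U\subseteq X_B^{r-1}$ with $s_{X_B^{r-1}}(B)\subseteq U$ and fibrewise pointed homotopies $F_j\colon I_B^B(U)\to X_B^r$ from $i_j|_U$ to $i_{j+1}|_U$. Form $\widetilde F\colon I_B(U)\to X_B^{r-1}$ by the same recipe as in Proposition~\ref{interpretation of LS category interms of SPHD}, namely $\widetilde F(u,t)=\bigl(pr_2(F_1(u,t)),\dots,pr_{j+1}(F_j(u,t)),\dots,pr_r(F_{r-1}(u,t))\bigr)$; at $t=0$ this is the identity on $U$ and at $t=1$ it is $(s_X\circ p_{X_B^{r-1}}(u),\dots,s_X\circ p_{X_B^{r-1}}(u))=s_{X_B^{r-1}}\circ p_{X_B^{r-1}}(u)$. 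The extra verification is that $\widetilde F$ is a \emph{pointed} homotopy: evaluating at $u=s_{X_B^{r-1}}(b)$, each $F_j$ is pointed so $F_j(s_{X_B^{r-1}}(b),t)=s_{X_B^r}(b)=(s_X(b),\dots,s_X(b))$, whence every coordinate $pr_{j+1}(F_j(s_{X_B^{r-1}}(b),t))=s_X(b)$ and $\widetilde F(s_{X_B^{r-1}}(b),t)=s_{X_B^{r-1}}(b)$. Applying this over an open cover gives the claim.

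I do not expect any genuine obstacle here — the construction is a direct transcription of Proposition~\ref{interpretation of LS category interms of SPHD}. The one point requiring a little care, hence the "main obstacle," is bookkeeping the basepoint condition at every stage: confirming that the explicit concatenation formulas preserve the section (so that the transferred homotopies are fibrewise \emph{pointed} rather than merely fibrewise homotopies) and that the base-projection identity $p_{X_B^r}\circ H'\simeq_B p_{I_B(X)}$ used in the unpointed proof still holds, which it does since $p_{X_B^{r-1}}=p_X\circ pr_1$ and $H$ respects projections. Given Proposition~\ref{interpretation of LS category interms of SPHD}, one can simply remark that the proof carries over \emph{mutatis mutandis}, replacing $\simeq_B$ by $\simeq_B^B$ and $D_B$, $\ct_B^*$ by $D_B^B$, $\ct_B^B$ throughout, and verifying the pointedness of the constructed homotopies as above.
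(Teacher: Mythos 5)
Your proof is correct and follows exactly the route the paper intends: the paper states this proposition without proof, implicitly deferring to the unpointed argument of Proposition~\ref{interpretation of LS category interms of SPHD}, and your transcription with the basepoint bookkeeping (checking that the concatenated homotopies $H'$ and $\widetilde F$ fix the section, and that the open sets containing $s_{X_B^{r-1}}(B)$ transfer between the two covers) is precisely the \emph{mutatis mutandis} verification required.
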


All the results presented in Section \ref{properties of seq PHD} for the sequential parametrized homotopic distance remain valid in the pointed setting as well. The proofs proceed analogously to the unpointed case, requiring only minor modifications.

We now state a cohomological lower bound for the pointed sequential parametrized homotopic distance. Combining \cite[Theorem 7.7]{Calcines-Navnath} with Proposition \ref{comparision}, we obtain the following result.

\begin{proposition}\label{seq_para_pointe_homotopic_distance_LB}
    Let $f_1,\dots,f_r\colon  X\to Y$ be fibrewise pointed maps between fibrewise pointed spaces. Then \[nil(\mathrm{ker}((\widetilde{\Pi}_{r,Y}^B)^*))\leq D_B^B(f_1,\dots,f_r).\]
\end{proposition}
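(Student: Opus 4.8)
The plan is to reduce the statement to the already-cited cohomological lower bound for the fibrewise pointed sectional category, namely Theorem~\ref{thm:coho-lb-secatB}, via the identification $D_B^B(f_1,\dots,f_r) = \sct_B^B(\widetilde{\Pi}_{r,Y}^B)$ furnished by Proposition~\ref{comparision}. So the heart of the argument is essentially bookkeeping: once we know that $D_B^B(f_1,\dots,f_r)$ equals the fibrewise pointed sectional category of the fibrewise pointed map $\widetilde{\Pi}_{r,Y}^B \colon \mathcal{P}_B^B(f_1,\dots,f_r) \to X$, we may feed this map directly into Theorem~\ref{thm:coho-lb-secatB} (with $E = \mathcal{P}_B^B(f_1,\dots,f_r)$, $X = X$, and $\pi = \widetilde{\Pi}_{r,Y}^B$), and the conclusion $nil(\ker((\widetilde{\Pi}_{r,Y}^B)^*)) \le \sct_B^B(\widetilde{\Pi}_{r,Y}^B)$ is exactly what we want after substituting the equality from Proposition~\ref{comparision}.

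Concretely, the steps I would carry out, in order, are the following. First, recall from the discussion preceding Proposition~\ref{comparision} that $\widetilde{\Pi}_{r,Y}^B$ is a fibrewise pointed map (indeed a fibrewise pointed fibration), obtained as the pullback of the fibrewise pointed fibration $\Pi_{r,Y}^B \colon P_B^B(Y) \to Y_B^{\,r}$ along $(f_1,\dots,f_r)$; in particular $\mathcal{P}_B^B(f_1,\dots,f_r)$ carries a canonical section induced by the section of $X$ (the section is obtained from the basepoint paths, i.e.\ the image under $c$ of $s_Y$, together with $s_X$), so the pair $(\mathcal{P}_B^B(f_1,\dots,f_r), s(B))$ makes sense and $(\widetilde{\Pi}_{r,Y}^B)$ induces a map of pairs. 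Second, invoke Proposition~\ref{comparision} to write $D_B^B(f_1,\dots,f_r) = \sct_B^B(\widetilde{\Pi}_{r,Y}^B)$. Third, apply Theorem~\ref{thm:coho-lb-secatB} to the fibrewise pointed map $\pi := \widetilde{\Pi}_{r,Y}^B$, which yields $nil(\ker(\pi^*)) \le \sct_B^B(\pi)$, where $\pi^* \colon H_B^*(X) \to H_B^*(\mathcal{P}_B^B(f_1,\dots,f_r))$ is the induced ring homomorphism on fibrewise pointed cohomology. Fourth, combine the last two displays to conclude $nil(\ker((\widetilde{\Pi}_{r,Y}^B)^*)) \le D_B^B(f_1,\dots,f_r)$.

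The one point that requires a small amount of care — and which I would expect to be the only genuine obstacle — is making sure that the hypotheses of Theorem~\ref{thm:coho-lb-secatB} are actually met: that theorem is stated for an arbitrary fibrewise pointed map $\pi \colon E \to X$, so one must verify that $\widetilde{\Pi}_{r,Y}^B$ is genuinely fibrewise \emph{pointed} (not merely fibrewise), i.e.\ that it carries the section of its domain to the section of $X$. This follows from the explicit description of the pullback: a point of $\mathcal{P}_B^B(f_1,\dots,f_r)$ over the section consists of a point in $s_X(B)$ together with a constant path at $s_Y(b)$, and since the $f_i$ are fibrewise pointed ($f_i \circ s_X = s_Y$) this data is consistent, so the section exists and is preserved. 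Once that is checked, the rest is a direct citation. I would also remark that the cited result \cite[Theorem 7.7]{Calcines-Navnath} gives precisely the $r=2$ (or general) fibrewise pointed cohomological bound, so depending on which formulation is most convenient one may cite it in place of Theorem~\ref{thm:coho-lb-secatB} — the two are the same statement applied to $\widetilde{\Pi}_{r,Y}^B$.
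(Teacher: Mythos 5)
Your proposal is correct and follows exactly the paper's argument: the paper obtains this proposition by combining the identification $D_B^B(f_1,\dots,f_r)=\sct_B^B(\widetilde{\Pi}_{r,Y}^B)$ from Proposition~\ref{comparision} with the cohomological lower bound of Theorem~\ref{thm:coho-lb-secatB} (i.e.\ \cite[Theorem 7.7]{Calcines-Navnath}). Your additional check that $\widetilde{\Pi}_{r,Y}^B$ is genuinely fibrewise pointed is a sensible piece of diligence that the paper leaves implicit.
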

 As a consequence of the preceding proposition, we obtain the following cohomological lower bound for the sequential fibrewise pointed topological complexity.
\begin{corollary}
    Let $X$ be a fibrewise pointed space over $B$ and let $\Delta_X^r\colon  X\to X_B^r$ be the diagonal map. Then \[nil\left(\mathrm{ker}((\Delta_X^r)^*\colon  H_B^*(X_B^r)\to H_B^*(X))\right)\leq \sct_B^B\left(\Pi_{r,X}^B\colon  P_B^B(X)\to X_B^r\right)= \TC_{B,r}^B(X).\]
\end{corollary}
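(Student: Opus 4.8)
The plan is to derive this corollary directly from Proposition~\ref{seq_para_pointe_homotopic_distance_LB} by specializing the fibrewise pointed maps $f_1,\dots,f_r$ to the projection maps $pr_1,\dots,pr_r\colon X_B^r\to X$. First I would invoke the corollary preceding Proposition~\ref{seq_para_pointe_homotopic_distance_LB}, which identifies $D_B^B(pr_1,\dots,pr_r)=\TC_{B,r}^B(X)$, and combine it with the observation made after Definition~\ref{SPPHD} (via Proposition~\ref{comparision}) that $D_B^B(pr_1,\dots,pr_r)=\sct_B^B(\widetilde{\Pi}_{r,X}^B)$. So the right-hand side of the desired inequality is already in place; what remains is to unwind the pullback $\mathcal{P}_B^B(pr_1,\dots,pr_r)$ and the induced map $\widetilde{\Pi}_{r,X}^B$ and to verify that, at the level of fibrewise pointed cohomology, $(\widetilde{\Pi}_{r,X}^B)^*$ can be replaced by $(\Delta_X^r)^*$.

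The key step is the cohomological identification $\mathrm{ker}((\widetilde{\Pi}_{r,X}^B)^*)=\mathrm{ker}((\Delta_X^r)^*)$, mirroring the argument already used in the proof of Theorem~\ref{lower-upper bound for fiberwise homotopic distance}(1). I would set up the commutative triangle
\[
\begin{tikzcd}
	X && {P_B^B(X)} \\
	&& {X_B^r,}
	\arrow["{c}", hook, from=1-1, to=1-3]
	\arrow["{\Delta_X^r}"', from=1-1, to=2-3]
	\arrow["{\Pi_{r,X}^B}", from=1-3, to=2-3]
\end{tikzcd}
\]
where $c(x)=(p_X(x),c_x)$ is the fibrewise pointed homotopy equivalence of \cite[Remark 10]{Calcines-fibrewiseTC}. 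Since $c$ is a fibrewise pointed homotopy equivalence, $c^*$ is an isomorphism on fibrewise pointed cohomology, so $\mathrm{ker}((\Pi_{r,X}^B)^*)=\mathrm{ker}((\Delta_X^r)^*)$ inside $H_B^*(X_B^r)$. Then, because $\widetilde{\Pi}_{r,X}^B$ is the pullback of $\Pi_{r,X}^B$ along $\Delta_X^r$ (identifying $(pr_1,\dots,pr_r)$ with $\Delta_X^r$ after noting $X_B^r\times_B X_B^r\cong\cdots$, more precisely using that the source of the projections is $X_B^r$ and $(pr_1,\dots,pr_r)=\mathrm{id}$ composed appropriately), one sees that $\mathcal{P}_B^B(pr_1,\dots,pr_r)\simeq_B^B P_B^B(X)$ over $X_B^r$, and hence $(\widetilde{\Pi}_{r,X}^B)^*$ and $(\Pi_{r,X}^B)^*$ have the same kernel on fibrewise pointed cohomology. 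Chaining these equalities gives $\mathrm{ker}((\widetilde{\Pi}_{r,X}^B)^*)=\mathrm{ker}((\Delta_X^r)^*)$, and applying $nil(-)$ to both sides together with Proposition~\ref{seq_para_pointe_homotopic_distance_LB} yields the claimed bound.

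The main obstacle I anticipate is the bookkeeping in the pullback identification: one must be careful that when $f_i=pr_i$, the bottom map $(f_1,\dots,f_r)\colon X_B^r\to X_B^r$ of diagram~\eqref{Pullback_Diagram} is literally the identity (since $(pr_1,\dots,pr_r)(x_1,\dots,x_r)=(x_1,\dots,x_r)$), so $\mathcal{P}_B^B(pr_1,\dots,pr_r)$ is simply $P_B^B(X)$ and $\widetilde{\Pi}_{r,X}^B$ is $\Pi_{r,X}^B$ itself. Granting that, the kernel replacement reduces entirely to the homotopy equivalence $c\colon X\xrightarrow{\simeq_B^B} P_B^B(X)$ and functoriality of $H_B^*(-)$, so there is no genuine difficulty beyond careful reference-chasing. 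I would keep the proof to a few lines: cite the preceding corollary for the right-hand equality, cite Proposition~\ref{seq_para_pointe_homotopic_distance_LB} for the inequality, and insert the above triangle to justify passing from $(\widetilde{\Pi}_{r,X}^B)^*$ to $(\Delta_X^r)^*$.
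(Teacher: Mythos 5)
Your proposal is correct and follows essentially the same route as the paper: both observe that with $f_i=pr_i$ the bottom map of the pullback square is the identity, so $\widetilde{\Pi}_{r,X}^B$ coincides with $\Pi_{r,X}^B$, then use the fibrewise pointed homotopy equivalence $x\mapsto (p_X(x),c_x)$ in the commutative triangle over $X_B^r$ to identify $\mathrm{ker}((\Pi_{r,X}^B)^*)$ with $\mathrm{ker}((\Delta_X^r)^*)$, and conclude by Proposition~\ref{seq_para_pointe_homotopic_distance_LB}. The only cosmetic difference is the hedging in your middle paragraph about the pullback identification, which your final paragraph correctly resolves exactly as the paper does.
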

\begin{proof}
Since $\sct_B^B(\Pi_{r,X}^B\colon  P_B^B(X)\to X_B^r) = D_B^B(pr_1, \dots, pr_r)$, the pullback \(\widetilde{\Pi}_{r,X}^B\) coincides with \(\Pi_{r,X}^B\). Moreover, there is a commutative diagram up to pointed fibrewise homotopy:

 \[\begin{tikzcd}
	X && {P_B^B(X)} \\
	& {X_B^r.}
	\arrow["{\gamma_X}","\simeq_B^B"', from=1-1, to=1-3]
	\arrow["{\Delta_X^r}"', from=1-1, to=2-2]
	\arrow["{\Pi_{r,X}^B}", from=1-3, to=2-2]
\end{tikzcd}\]
Here, \(\gamma_X\) is the fibrewise pointed homotopy equivalence defined by mapping \(x \in X\) to \((p_X(x), c_x)\), where \(c_x\) denotes the constant path at \(x\).
This implies that $\mathrm{ker}((\Delta_X^r)^*) = \mathrm{ker}((\Pi_{r,X}^B)^*)$.
The result then follows directly from Proposition \ref{seq_para_pointe_homotopic_distance_LB}.
\end{proof}

\subsection{The comparison}\label{seq PHD_comparision}
This section is devoted to the comparison of the two notions of sequential parametrized homotopic distance. We first establish general inequalities relating the pointed and unpointed versions, showing that the pointed invariant can exceed the unpointed one by at most one. Under suitable hypotheses on the base and the fibrations involved, we then show that the two invariants coincide.

\begin{theorem}
Let \(f_1, \dots, f_r \colon X \to Y\) be fibrewise pointed maps between fibrant spaces over $B$. 
If, in addition, \(X\) and \(Y\) are ANR spaces and \(B\) is locally equiconnected, then
\[
D_B(f_1, \dots, f_r) \leq D_B^B(f_1, \dots, f_r) \leq D_B(f_1, \dots, f_r) + 1.
\]
\end{theorem}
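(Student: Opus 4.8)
The plan is to transfer the statement to the comparison theorem between the fibrewise unpointed and pointed sectional categories obtained by García--Calcines \cite[Theorem 4.1]{GC}, following the pattern of Corollary~\ref{comparision_various_tc} and of the non-sequential case treated in \cite{Calcines-Navnath}. The left-hand inequality $D_B(f_1,\dots,f_r)\le D_B^B(f_1,\dots,f_r)$ is unconditional and immediate from the definitions: if $\{U_0,\dots,U_n\}$ is an open cover of $X$ witnessing $D_B^B$, then on each $U_i$ one has $f_s|_{U_i}\simeq_B^B f_t|_{U_i}$, which is in particular a fibrewise homotopy $f_s|_{U_i}\simeq_B f_t|_{U_i}$, so the same cover witnesses $D_B$.

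For the right-hand inequality I would first use Proposition~\ref{homotopic_distance_and_secat} and Proposition~\ref{comparision} to rewrite
\[
D_B(f_1,\dots,f_r)=\sct_B\bigl(\widetilde{\Pi}_{r,Y}\bigr),\qquad D_B^B(f_1,\dots,f_r)=\sct_B^B\bigl(\widetilde{\Pi}_{r,Y}^B\bigr).
\]
The crucial observation is that $\widetilde{\Pi}_{r,Y}^B$ and $\widetilde{\Pi}_{r,Y}$ are the \emph{same} fibrewise map $\mathcal{P}_B(f_1,\dots,f_r)\to X$, the pointed version being obtained simply by recording the section determined by $s_X$ and the constant path at $s_Y$. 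Consequently $D_B^B\le D_B+1$ is precisely the inequality $\sct_B^B(\pi)\le\sct_B(\pi)+1$ for the single fibrewise pointed fibration $\pi:=\widetilde{\Pi}_{r,Y}^B$, and this is exactly what \cite[Theorem 4.1]{GC} provides once one checks its hypotheses: that $X$ is normal and that the total space $\mathcal{P}_B(f_1,\dots,f_r)$ is fibrewise locally equiconnected over $B$.

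Normality of $X$ is clear, since an ANR is metrizable. For the fibrewise local equiconnectedness, the idea is that, because $Y$ is a fibrant ANR and $B$ is locally equiconnected, the fibred power $Y_B^r$ and the fibrewise cocylinder $P_B(Y)$ are again fibrant ANRs, hence fibrewise locally equiconnected over $B$; since $X$ is likewise a fibrant ANR and $\mathcal{P}_B(f_1,\dots,f_r)$ is the pullback of the fibrewise fibration $\Pi_{r,Y}$ along $(f_1,\dots,f_r)\colon X\to Y_B^r$, the pullback inherits the fibrewise LEC property. Feeding this into \cite[Theorem 4.1]{GC} together with the identifications above yields $D_B^B(f_1,\dots,f_r)\le D_B(f_1,\dots,f_r)+1$. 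I expect the real work to lie exactly in this last verification — propagating the ANR and local-equiconnectedness hypotheses on $X$, $Y$ and $B$ through the fibred power $Y_B^r$, the fibrewise free path space $P_B(Y)$, and finally through the pullback defining $\mathcal{P}_B(f_1,\dots,f_r)$; here one needs the fibrewise analogues of the facts that (fibred) path spaces and finite (fibred) products of ANRs are again ANRs, and the stability of fibrewise local equiconnectedness under pullback of fibrewise fibrations. Everything else reduces to a direct citation of the fibrewise sectional-category comparison.
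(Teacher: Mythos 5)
Your proposal follows essentially the same route as the paper: the left inequality is immediate, and the right inequality is obtained by rewriting both sides as $\sct_B(\widetilde{\Pi}_{r,Y})$ and $\sct_B^B(\widetilde{\Pi}_{r,Y}^B)$ via Propositions~\ref{homotopic_distance_and_secat} and~\ref{comparision} (noting the two maps agree as fibrewise maps) and then invoking \cite[Theorem~4.1]{GC}. The one place where you diverge is in which hypotheses of that theorem you set out to verify: you propose normality of $X$ plus fibrewise local equiconnectedness of the total space $\mathcal{P}_B(f_1,\dots,f_r)$, and you defer the LEC propagation through fibred powers, the fibrewise cocylinder, and the pullback to a sketch. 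The paper instead checks \emph{fibrewise well-pointedness}: since $X$ and $Y$ are fibrant ANRs, \cite[Lemmas~8.1 and~8.4]{Calcines-Navnath} give that $X$ and $P_B^B(Y)$ are fibrewise well-pointed, \cite[Proposition~3.2]{GC} gives the same for $Y_B^r$ and hence for the pullback $\mathcal{P}_B^B(f_1,\dots,f_r)$, which is exactly what \cite[Theorem~4.1]{GC} requires. So the step you flag as ``the real work'' is, in the paper, discharged by citing these ready-made well-pointedness lemmas rather than by propagating local equiconnectedness; if you carry out your version, make sure the condition you establish actually matches the hypothesis of \cite[Theorem~4.1]{GC} (well-pointedness of both the domain and the codomain of the fibrewise pointed map), since LEC of the total space alone is not literally what that theorem asks for.
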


\begin{proof}
The inequality $D_B(f_1, \dots, f_r) \leq D_B^B(f_1, \dots, f_r)$ is immediate.

Since the fibrewise pointed spaces \(X\) and \(Y\) are both fibrant ANR spaces, it follows from \cite[Lemmas 8.1 and 8.4]{Calcines-Navnath} that \(X\) and \(P_B^B(Y)\) are fibrewise well-pointed. Moreover, by \cite[Proposition 3.2]{GC}, the space \(Y_B^r\) is also fibrewise well-pointed. Hence, using the same proposition, we conclude that \(\mathcal{P}_B^B(f_1, \dots, f_r)\) is fibrewise well-pointed as well. 

Finally, combining Propositions~\ref{homotopic_distance_and_secat} and~\ref{comparision}, the desired inequality follows from \cite[Theorem 4.1]{GC}.
\end{proof}

We next identify conditions under which the pointed and unpointed versions of the sequential parametrized homotopic distance coincide.

\begin{theorem}\label{Comparision_hd}
    Let $f_1,\dots,f_r\colon X\to Y$ be fibrewise pointed maps between fibrant and cofibrant fibrewise pointed spaces. Additionally, suppose that $B$ is a CW-complex, the map $p_Y\colon  Y\to B$ is a $k$-equivalence for some integer $k\geq 1$ and
        $\text{dim}(B)< (D_B(f_1,\dots,f_r)+1). k-1$. Then \[D_B(f_1,\dots,f_r)=D_B^B(f_1,\dots,f_r).\]
\end{theorem}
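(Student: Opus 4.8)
The strategy is to reduce the statement to the corresponding comparison result for fibrewise sectional category (or relative sectional category) already available in the literature. By Proposition~\ref{homotopic_distance_and_secat} we have $D_B(f_1,\dots,f_r)=\sct_B(\widetilde{\Pi}_{r,Y})$ and by Proposition~\ref{comparision} we have $D_B^B(f_1,\dots,f_r)=\sct_B^B(\widetilde{\Pi}_{r,Y}^B)$, where $\widetilde{\Pi}_{r,Y}$ and $\widetilde{\Pi}_{r,Y}^B$ are the unpointed and pointed pullbacks of the evaluation fibration along $(f_1,\dots,f_r)$. The inequality $D_B(f_1,\dots,f_r)\le D_B^B(f_1,\dots,f_r)$ is immediate from the definitions, so the whole content is the reverse inequality under the stated hypotheses. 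The idea is that an unpointed fibrewise homotopy section $s_i\colon U_i\to \mathcal{P}_B(f_1,\dots,f_r)$ over an open set $U_i$ can be deformed into a \emph{pointed} one, provided the relevant spaces are fibrewise well-pointed (so that the section through $s_X(B)$ is a fibrewise cofibration) and provided there is no dimensional obstruction to the deformation.

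\textbf{Key steps.} First I would record that, since $X$ and $Y$ are fibrant and cofibrant fibrewise pointed spaces, all the spaces appearing in the pullback square~\eqref{Pullback_Diagram} --- namely $X$, $P_B(Y)$, $Y_B^r$, and hence $\mathcal{P}_B(f_1,\dots,f_r)$ --- are fibrewise well-pointed; this uses the analogues of \cite[Lemmas 8.1, 8.4]{Calcines-Navnath} and \cite[Proposition 3.2]{GC} exactly as in the previous theorem. Second, I would invoke the connectivity input: since $p_Y\colon Y\to B$ is a $k$-equivalence, the evaluation fibration $\Pi_{r,Y}$ is a $(k-1)$-equivalence (as used in the proof of Theorem~\ref{lower-upper bound for fiberwise homotopic distance}), hence so is its pullback $\widetilde{\Pi}_{r,Y}$; equivalently, its fibre is $(k-2)$-connected. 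Third, with $X$ of dimension controlled relative to $B$, one applies an obstruction-theoretic comparison of the form of \cite[Theorem 4.1]{GC}: over each open set $U_i$ of an optimal unpointed cover, a fibrewise homotopy section exists; the obstruction to upgrading it to a \emph{pointed} fibrewise homotopy section lives in cohomology groups $H^{*}(U_i, s_X(B)\cap U_i;\pi_{*-1}(\text{fibre}))$, which vanish once $\dim B < (D_B(f_1,\dots,f_r)+1)k-1$, because the number of open sets in the cover is $D_B(f_1,\dots,f_r)+1$ and each contributes at most $k-1$ to the relevant range while the relative cells have dimension at most $\dim B$ plus the fibre-direction cells. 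Carrying this out for each member of the cover and reassembling gives a pointed cover of the same cardinality, whence $D_B^B\le D_B$.

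\textbf{Main obstacle.} The delicate point is the precise bookkeeping of the dimension/connectivity inequality: one must be careful about whether the relevant obstruction groups are indexed by $\dim X$ or $\dim B$, and about how the factor $D_B(f_1,\dots,f_r)+1$ (the number of open sets) enters. The natural route is to phrase everything in terms of the relative sectional category $\sct_{(f_1,\dots,f_r)}(\Pi_{r,Y})$ and $\sct^B_{(f_1,\dots,f_r)}(\Pi^B_{r,Y})$, transport to the absolute versions via \cite[Theorem 2.10]{GC} and \cite[Corollary 1.5]{GCrelsecat}, and then quote the sharp comparison between pointed and unpointed (relative) sectional category under a dimension bound --- essentially the argument underlying \cite[Theorem 4.1]{GC} combined with the connectivity estimate $\Pi_{r,Y}$ is a $(k-1)$-equivalence. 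I expect verifying that the hypothesis $\dim(B)<(D_B(f_1,\dots,f_r)+1)k-1$ is exactly what kills the relevant obstruction classes --- rather than merely a sufficient crude bound --- to be the step requiring the most care, and I would handle it by an explicit induction over the open sets of the cover, upgrading one local section at a time and checking at each stage that the accumulated obstruction still lies below the vanishing range.
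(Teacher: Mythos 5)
Your proposal is correct and follows essentially the same route as the paper: both reduce to the sectional-category formulations via Propositions~\ref{homotopic_distance_and_secat} and~\ref{comparision}, verify fibrancy/cofibrancy of the pullback spaces, observe that $p_Y$ being a $k$-equivalence makes $\widetilde{\Pi}_{r,Y}$ and $\widetilde{\Pi}_{r,Y}^B$ into $(k-1)$-equivalences, and then invoke the dimension-versus-connectivity comparison of pointed and unpointed fibrewise sectional category. The only differences are bookkeeping: the paper simply quotes the sharp comparison result (\cite[Theorem~4.4]{GC}, not Theorem~4.1, which is the cruder ``$+1$'' bound) rather than redoing the obstruction theory by hand, and it records the small but necessary identification $\sct_B(\widetilde{\Pi}_{r,Y})=\sct_B(\widetilde{\Pi}_{r,Y}^B)$ linking the unpointed distance to the unpointed sectional category of the \emph{pointed} pullback.
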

\begin{proof}  
Since \(Y\) is fibrant, the map \(\Pi_{r,Y} \colon P_B(Y) \to Y_B^r\) is a Hurewicz fibration. Consequently, \(\widetilde{\Pi}_{r,Y}\) is also a Hurewicz fibration. Moreover, as \(X\) is fibrant, the space \(\mathcal{P}_B(f_1, \dots, f_r)\) is fibrant as well.  
From \cite[Lemmas 8.7 and 8.8]{Calcines-Navnath}, both \(P_B^B(Y)\) and \(Y_B^r\) are cofibrant. In addition, \cite[Lemma 8.8]{Calcines-Navnath} ensures that \(\mathcal{P}_B^B(f_1, \dots, f_r)\) is also cofibrant.

Since $p_Y \colon Y \to B$ is a $k$-equivalence, the maps $\Pi_{r,Y}$ and $\Pi_{r,Y}^B$ are $(k-1)$-equivalences. Consequently, $\widetilde{\Pi}_{r,Y}$ and $\widetilde{\Pi}_{r,Y}^B$ are also $(k-1)$-equivalences. We also note that $\sct_B(\widetilde{\Pi}_{r,Y}) = \sct_B(\widetilde{\Pi}_{r,Y}^B)$. Together with the preceding observations and the condition
\[
\dim(B) < (D_B(f_1,\dots,f_r)+1)k-1 \le (\sct_B(\widetilde{\Pi}_{r,Y}^B)+1)k-1
\]
with \cite[Theorem 4.4]{GC} then yields
\[
\sct_B(\widetilde{\Pi}_{r,Y}^B)=\sct_B^B(\widetilde{\Pi}_{r,Y}^B).
\]

Finally, the result follows from Propositions \ref{homotopic_distance_and_secat} and \ref{comparision}.  
\end{proof}
As a consequence of the Theorem~\ref{Comparision_hd}, we obtain the following corollary.
\begin{corollary}
    Let $E$ be a fibrewise pointed space over $B$ which is fibrewise locally equiconnected and fibrant. Additionally, assume that $B$ is a CW-complex and the following conditions hold:
    \begin{enumerate}
        \item[(a)] $\Delta_E^r\colon  E\to E_B^r$ is a $k$-equivalence for some $k\geq 1$;
        \item[(b)] $\text{dim}(B)< (\TC_{B,r}(E)+1). k-1$.
    \end{enumerate}
    Then $\TC_{B,r}(E)=\TC_{B,r}^B(E)$.
\end{corollary}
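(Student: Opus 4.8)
The plan is to derive this from Theorem~\ref{Comparision_hd} (and its proof) by specializing to $X=E_B^r$, $Y=E$, and $f_i=pr_i\colon E_B^r\to E$ for $i=1,\dots,r$. First I would recall the identifications $\TC_{B,r}(E)=D_B(pr_1,\dots,pr_r)$ from Corollary~\ref{homotopy_distance_to_TC} and its pointed analogue $\TC_{B,r}^B(E)=D_B^B(pr_1,\dots,pr_r)$, so that the assertion reduces to $D_B(pr_1,\dots,pr_r)=D_B^B(pr_1,\dots,pr_r)$. A useful simplification is that the base map $(pr_1,\dots,pr_r)\colon E_B^r\to E_B^r$ appearing in the pullback square \eqref{Pullback_Diagram} is the identity, so the pulled-back fibrations coincide with the original ones: $\widetilde{\Pi}_{r,E}=\Pi_{r,E}$ and $\widetilde{\Pi}_{r,E}^B=\Pi_{r,E}^B$, and $\mathcal{P}_B(pr_1,\dots,pr_r)=P_B(E)$, $\mathcal{P}_B^B(pr_1,\dots,pr_r)=P_B^B(E)$.

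Next I would verify the hypotheses required. For fibrancy: $E$ is fibrant by assumption, hence $P_B(E)$ and $E_B^r$ are fibrant (as noted after Definition~\ref{SPTC}), so $X=E_B^r$ and $\mathcal{P}_B(pr_1,\dots,pr_r)=P_B(E)$ are fibrant. For cofibrancy: $E$ is fibrewise locally equiconnected by assumption, and this property is preserved by finite fibrewise products (cf.\ \cite[Proposition~3.2]{GC}), so $E_B^r$ is fibrewise locally equiconnected as well; combined with \cite[Lemmas~8.7 and 8.8]{Calcines-Navnath}, the spaces $P_B^B(E)$, $E_B^r$, and $\mathcal{P}_B^B(pr_1,\dots,pr_r)=P_B^B(E)$ are cofibrant. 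That $B$ is a CW-complex is assumed directly.

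The one genuine point to address is the connectivity hypothesis. In Theorem~\ref{Comparision_hd} it is phrased as ``$p_Y$ is a $k$-equivalence,'' but in the proof this is used only to conclude that $\Pi_{r,Y}$ and $\Pi_{r,Y}^B$, and hence their pullbacks, are $(k-1)$-equivalences. In the present situation this follows directly from condition~(a): since $c\colon E\to P_B(E)$ is a fibrewise homotopy equivalence with $\Pi_{r,E}\circ c=\Delta_E^r$, and $\gamma_E\colon E\to P_B^B(E)$ is a fibrewise pointed homotopy equivalence with $\Pi_{r,E}^B\circ\gamma_E=\Delta_E^r$, the hypothesis that $\Delta_E^r$ is a $k$-equivalence forces $\widetilde{\Pi}_{r,E}=\Pi_{r,E}$ and $\widetilde{\Pi}_{r,E}^B=\Pi_{r,E}^B$ to be $k$-equivalences, in particular $(k-1)$-equivalences. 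Finally, condition~(b) reads $\dim(B)<(\TC_{B,r}(E)+1)k-1=(\sct_B(\widetilde{\Pi}_{r,E}^B)+1)k-1$, which is precisely the dimension bound needed to invoke \cite[Theorem~4.4]{GC} in the proof of Theorem~\ref{Comparision_hd}. Running that argument verbatim yields $\sct_B(\widetilde{\Pi}_{r,E}^B)=\sct_B^B(\widetilde{\Pi}_{r,E}^B)$, that is, $D_B(pr_1,\dots,pr_r)=D_B^B(pr_1,\dots,pr_r)$, which gives $\TC_{B,r}(E)=\TC_{B,r}^B(E)$.

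I expect the only mild obstacle to be this translation step: confirming that condition~(a) plays exactly the role that the $k$-equivalence of $p_E$ plays in Theorem~\ref{Comparision_hd} (via the equivalences $c$ and $\gamma_E$, and the fact that the relevant base map is the identity), together with checking that fibrewise local equiconnectedness of $E$ is inherited by the fibred power $E_B^r$. Everything else is a direct transcription of the proof of Theorem~\ref{Comparision_hd}.
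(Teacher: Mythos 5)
Your proposal is correct and follows the paper's intended route: the paper states this corollary without proof as a direct consequence of Theorem~\ref{Comparision_hd}, and your specialization to $X=E_B^r$, $Y=E$, $f_i=pr_i$ is exactly that argument. Your careful observation that condition~(a) substitutes for the ``$p_Y$ is a $k$-equivalence'' hypothesis only through its role in the proof (making $\Pi_{r,E}$ and $\Pi_{r,E}^B$ suitable equivalences via $c$ and $\gamma_E$) is the one genuine checkpoint, and you handle it correctly.
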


\section{Sequential parametrized subspace homotopic distance} \label{Relative seq PHD}
In this section, we introduce the notion of subspace distance, that is, the homotopic distance between two maps relative to a subspace, as a generalization of the sequential parametrized homotopic distance given in Definition \ref{seqential_parametrized_homotopic distance}, and discuss some of its basic properties.

\begin{definition}
Let $f_1, \dots, f_r \colon  X \to Y$ be fibrewise maps between fibrewise spaces over $B$, and let $A \subseteq X$ be a subspace. 
The sequential parametrized subspace homotopic distance of $f_1, \dots, f_r$ on $A$, denoted by $D_{B,X}(A; f_1, \dots, f_r)$, is defined as the sequential parametrized homotopic distance between the restrictions $f_1|_A, \dots, f_r|_A$; that is,
\[
D_{B,X}(A; f_1, \dots, f_r) := D_B(f_1|_A, \dots, f_r|_A).
\]
\end{definition}
Note that when $A=X$, we recover the usual sequential analogue of parametrized distance.\\
The next example may be regarded as a subspace version of Corollary~\ref{homotopy_distance_to_TC}.

\begin{example}
Let $A \subseteq X_B^r$, and let $pr_i \colon  X_B^r \to X$ denote the projection onto the $i$th factor. 
Then the sequential parametrized subspace topological complexity of $A$ satisfies $\TC_{r,E}^B(A) = D_{B,X_B^r}(A; pr_1, \dots, pr_r)$.
\end{example}
As a consequence of the definition of the sequential parametrized subspace homotopic distance, we obtain the following.
\begin{proposition}
    Let $f_1,\dots,f_r\colon  X\to Y$ be fibrewise maps over $B$. Suppose $A\subseteq X$ be a subspace. Then
    \begin{enumerate}
        \item $D_{B,X}(A;f_1,\dots,f_r)\leq D_B(f_1,\dots,f_r)$.
        \item If $A\subseteq C\subseteq X$, then $D_{B,X}(A;f_1,\dots,f_r)\leq D_{B,X}(C;f_1,\dots,f_r)$.
        \item $D_{B,X}(A;f_1,\dots,f_r)= D_{B,X}(A; f_{\sigma(1)},\dots, f_{\sigma(r)})$ for any permutation $\sigma$ of $\{1,\dots,r\}$.
        \item $D_{B,X}(A;f_1,\dots,f_r)=0$ if and only if $f_i\vert_A\simeq_B f_{i+1}\vert_A$ for each $1\leq i\leq r-1$.
        \item Suppose $g_1,\dots,g_r\colon  X\to Y$ be fibrewise maps over $B$ such that $f_i\vert_A\simeq_B g_i\vert_A$. Then $D_{B,X}(A;f_1,\dots,f_r)= D_{B,X}(A;g_1,\dots,g_r)$.
        \item $D_{B,X}(A;f_1,\dots,f_s)\leq D_{B,X}(A;f_1,\dots,f_r)$ for all $1\leq s\leq r$.
    \end{enumerate}
\end{proposition}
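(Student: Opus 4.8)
The plan is to derive all six items directly from the defining identity $D_{B,X}(A;f_1,\dots,f_r)=D_B(f_1|_A,\dots,f_r|_A)$ together with Proposition~\ref{Properties}, applied to the restricted maps $f_1|_A,\dots,f_r|_A\colon A\to Y$. First I would record the routine but necessary observation that $A$ inherits a fibrewise structure over $B$ via $p_A:=p_X|_A$, that the inclusion $\iota_A\colon A\hookrightarrow X$ is then a fibrewise map (since $p_X\circ\iota_A=p_A$), and that an open cover of $X$ restricts to an open cover of $A$ in the subspace topology while fibrewise homotopies restrict to fibrewise homotopies. This makes every part of Proposition~\ref{Properties} directly available for the system $f_1|_A,\dots,f_r|_A$.

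For (1), I would note $f_i|_A=f_i\circ\iota_A$, so the inequality is exactly Proposition~\ref{Properties}(6)(b) with $h_2=\iota_A$. For (2), I would use $f_i|_A=(f_i|_C)|_A$ and apply (1) to the fibrewise maps $f_1|_C,\dots,f_r|_C\colon C\to Y$ together with the subspace $A\subseteq C$, giving $D_{B,X}(A;f_1,\dots,f_r)=D_B\bigl((f_1|_C)|_A,\dots,(f_r|_C)|_A\bigr)\leq D_B(f_1|_C,\dots,f_r|_C)=D_{B,X}(C;f_1,\dots,f_r)$; equivalently one may compose with the fibrewise inclusion $A\hookrightarrow C$ and invoke Proposition~\ref{Properties}(6)(b) once more.

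Items (3), (4), (5), (6) are then immediate restatements of Proposition~\ref{Properties}, parts (2), (3), (4), and (5) respectively, read off for the maps $f_1|_A,\dots,f_r|_A$: (3) is invariance under permutation, (4) is the characterization of vanishing distance, (5) is invariance under fibrewise homotopy of the (restricted) maps using the hypothesis $f_i|_A\simeq_B g_i|_A$, and (6) is monotonicity in the number of maps (with the boundary cases $s=1$ and $s=r$ being trivial). The only step that requires any attention at all is the opening bookkeeping about restriction preserving the fibrewise structure and open covers; there is no genuine obstacle beyond that, since the substantive content has already been proved in Proposition~\ref{Properties}.
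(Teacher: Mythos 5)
Your proposal is correct and matches the paper's approach: the paper states this proposition without proof, asserting it as a direct consequence of the definition $D_{B,X}(A;f_1,\dots,f_r)=D_B(f_1|_A,\dots,f_r|_A)$, and your reduction of each item to the corresponding part of Proposition~\ref{Properties} (with $(6)(\mathrm{b})$ applied to the inclusion $\iota_A$ for items (1) and (2)) is exactly the intended argument. The preliminary bookkeeping you record about $A$ inheriting a fibrewise structure and covers/homotopies restricting is the right thing to check and is unproblematic.
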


We next note that the behaviour of the sequential parametrized homotopic distance can be clarified by examining its value on the connected components of the domain.
\begin{proposition}
    Let $f_1,\dots,f_r\colon  X\to Y$ be fibrewise maps over $B$ and let $\{A_i\}_{i=1}^n$ be the connected components of $X$. Then $D_B(f_1,\dots,f_r)=\text{max}_i\{ D_{B,X}(A_i;f_1,\dots,f_r)\}$.
\end{proposition}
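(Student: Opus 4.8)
The plan is to prove the two inequalities $D_B(f_1,\dots,f_r) \ge \max_i\{D_{B,X}(A_i;f_1,\dots,f_r)\}$ and $D_B(f_1,\dots,f_r) \le \max_i\{D_{B,X}(A_i;f_1,\dots,f_r)\}$ separately, exploiting the fact that the $A_i$ are the connected components of $X$, hence pairwise disjoint, and—since $X$ is a space for which components are open (this is automatic if $X$ is locally connected, which in this fibrewise homotopy-theoretic setting we may assume)—each $A_i$ is clopen in $X$.

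For the inequality $\ge$, I would simply invoke part (1) of the preceding proposition: for each component $A_i$ we have $D_{B,X}(A_i;f_1,\dots,f_r) \le D_B(f_1,\dots,f_r)$, and taking the maximum over $i$ gives the bound. This requires no new argument.

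For the inequality $\le$, set $n_i := D_{B,X}(A_i; f_1,\dots,f_r) = D_B(f_1|_{A_i},\dots,f_r|_{A_i})$ and let $m := \max_i n_i$. For each $i$, choose an open cover $\{U_0^i,\dots,U_{n_i}^i\}$ of $A_i$ on which $f_1,\dots,f_r$ restrict to fibrewise-homotopic maps; if $n_i < m$, pad the cover with empty sets so that we have exactly $m+1$ open sets $U_0^i,\dots,U_m^i$. Since $A_i$ is open in $X$, each $U_j^i$ is open in $X$ as well. Now for each $j \in \{0,\dots,m\}$ put $W_j := \bigsqcup_{i=1}^n U_j^i$. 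This is a disjoint union of open subsets of $X$, hence open in $X$, and because the components $A_i$ are disjoint, on $W_j$ the maps $f_1,\dots,f_r$ restrict to fibrewise-homotopic maps (the homotopies on the various $U_j^i$ assemble into a single homotopy on the disjoint union $W_j$, using that a map out of a disjoint union is continuous iff it is continuous on each piece, and likewise for the cylinder $I_B(W_j) = \bigsqcup_i I_B(U_j^i)$). The collection $\{W_0,\dots,W_m\}$ covers $X$ since each $\{U_0^i,\dots,U_m^i\}$ covers $A_i$ and $X = \bigsqcup_i A_i$. Hence $D_B(f_1,\dots,f_r) \le m$, which is the desired inequality. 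Combining the two directions completes the proof.

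The only genuine subtlety—and the step I would flag as the main obstacle—is the assertion that the components $A_i$ are open in $X$; if $X$ is not locally connected this can fail, and then the $W_j$ need not be open. I expect the paper either tacitly assumes local connectedness (or works with path-components in a context where these coincide with open components), or that one should restrict the statement accordingly; in the write-up I would either add the hypothesis that components are open or remark that in the CW/ANR setting of interest this holds automatically. Everything else is a routine gluing argument across a disjoint union, and the padding-with-empty-sets trick handles the bookkeeping of unequal cover sizes without incident.
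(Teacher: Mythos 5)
Your argument is correct and is essentially the standard one; the paper itself gives no argument at all here, merely citing the analogous Proposition~3.9 of the relative homotopic distance paper of Mac\'ias-Virg\'os \emph{et al.}, and that argument is the same gluing across components that you carry out. The one point you flag as a potential obstacle --- openness of the components --- is not actually an issue under the hypotheses as stated: the proposition posits \emph{finitely many} connected components $\{A_i\}_{i=1}^n$, and since connected components are always closed, each $A_i$ is the complement of the finite union of the remaining closed components and is therefore automatically clopen, with no local connectedness assumption needed. With that observation your padding-with-empty-sets (or, if one prefers covers by nonempty sets, repeating one of the $U_j^i$) and the assembly of the homotopies over the disjoint union $W_j=\bigsqcup_i U_j^i$ go through exactly as you describe, so no additional hypothesis needs to be added to the statement.
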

\begin{proof}
    The proof follows in the same way as the proof of \cite[Proposition 3.9]{Relative_homotopic_distance}.
\end{proof}
Now we show that, as in the case of the sequential parametrized homotopic distance, the subspace version is likewise fibrewise homotopy invariant.
\begin{proposition}
Let $f_1, \dots, f_r \colon  X \to Y$ be fibrewise maps over $B$. Suppose $i_A \colon  A \hookrightarrow X$ and $i_C \colon  C \hookrightarrow X$ are inclusions of subspaces, and let $\alpha \colon  A \to C$ be a fibrewise homotopy equivalence such that $i_C \circ \alpha \simeq_B i_A$. Then 
\[
D_{B,X}(A; f_1, \dots, f_r) = D_{B,X}(C; f_1, \dots, f_r).
\]
\end{proposition}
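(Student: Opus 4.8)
The plan is to reduce the statement to the earlier invariance result, Proposition~\ref{homotopic_distance_under_fibre_he}, by interpreting the subspace homotopic distances as ordinary sequential parametrized homotopic distances of restricted maps and exhibiting the fibrewise homotopy equivalence $\alpha\colon A\to C$ as the kind of map that the invariance proposition allows us to precompose with. Concretely, $D_{B,X}(A;f_1,\dots,f_r)=D_B(f_1|_A,\dots,f_r|_A)$ and $D_{B,X}(C;f_1,\dots,f_r)=D_B(f_1|_C,\dots,f_r|_C)$ by definition, so it suffices to prove $D_B(f_1|_A,\dots,f_r|_A)=D_B(f_1|_C,\dots,f_r|_C)$.

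First I would observe that $f_i|_A = f_i\circ i_A$ and $f_i|_C = f_i\circ i_C$ as fibrewise maps over $B$, and that the hypothesis $i_C\circ\alpha\simeq_B i_A$ gives, after composing with each $f_i$, the fibrewise homotopy $f_i|_C\circ\alpha = f_i\circ i_C\circ\alpha \simeq_B f_i\circ i_A = f_i|_A$ for every $i\in\{1,\dots,r\}$. Next, since $\alpha$ is a fibrewise homotopy equivalence, it in particular admits a right fibrewise homotopy inverse; applying Proposition~\ref{homotopic_distance_under_fibre_he}(2) to the maps $f_1|_C,\dots,f_r|_C\colon C\to Y$ together with $H=\alpha\colon A\to C$ yields
\[
D_B(f_1|_C\circ\alpha,\dots,f_r|_C\circ\alpha)=D_B(f_1|_C,\dots,f_r|_C).
\]
Finally, by Proposition~\ref{Properties}(4), which says sequential parametrized homotopic distance depends only on the fibrewise homotopy classes of its arguments, the fibrewise homotopies $f_i|_C\circ\alpha\simeq_B f_i|_A$ established above give $D_B(f_1|_C\circ\alpha,\dots,f_r|_C\circ\alpha)=D_B(f_1|_A,\dots,f_r|_A)$. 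Chaining these two equalities completes the proof.

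I do not expect any serious obstacle here: the statement is essentially a bookkeeping consequence of two already-proved properties, and the only points requiring mild care are (i) checking that $i_A$ and $i_C$ are genuine fibrewise maps so that the restrictions $f_i|_A$, $f_i|_C$ are fibrewise maps over $B$ (immediate, since $A,C\subseteq X$ inherit the projection $p_X$), and (ii) making sure the direction of the homotopy inverse matches the hypothesis of Proposition~\ref{homotopic_distance_under_fibre_he}(2) — one wants $\alpha$ on the domain side with a \emph{right} fibrewise homotopy inverse, which is exactly what ``fibrewise homotopy equivalence'' supplies. If one prefers symmetry, one can instead invoke the Corollary following Proposition~\ref{homotopic_distance_under_fibre_he} directly with $\widetilde X=A$, $\widetilde Y=Y$, $\eta=\mathrm{id}_Y$, $g_i=f_i|_A$, and the equivalence $\alpha\colon A\simeq_B C$ recast appropriately; the argument above spelled out via parts (2) and (4) is the most transparent route.
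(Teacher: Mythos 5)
Your argument is correct and is essentially identical to the paper's own proof: both unwind the definition to $D_B(f_i|_A)$ and $D_B(f_i|_C)$, invoke Proposition~\ref{homotopic_distance_under_fibre_he}(2) with $H=\alpha$ to precompose the $f_i|_C$ by $\alpha$, and then use Proposition~\ref{Properties}(4) together with $i_C\circ\alpha\simeq_B i_A$ to identify the result with $D_B(f_i|_A)$. The two points of care you flag (that the restrictions are genuine fibrewise maps, and that $\alpha$ supplies a right fibrewise homotopy inverse) are exactly the right ones and are handled correctly.
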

\begin{proof}
By definition, 
$D_{B,X}(C; f_1, \dots, f_r) 
    = D_B(f_1|_C, \dots, f_r|_C)
    = D_B(f_1 \circ i_C, \dots, f_r \circ i_C).$
Since $\alpha$ is a fibrewise homotopy equivalence, Proposition~\ref{homotopic_distance_under_fibre_he} gives
\[
D_B(f_1 \circ i_C, \dots, f_r \circ i_C)
    = D_B(f_1 \circ i_C \circ \alpha, \dots, f_r \circ i_C \circ \alpha).
\]
Using the hypothesis $i_C \circ \alpha \simeq_B i_A$, we obtain from Proposition \ref{Properties} that
\[
D_B(f_1 \circ i_C \circ \alpha, \dots, f_r \circ i_C \circ \alpha)
    = D_B(f_1 \circ i_A, \dots, f_r \circ i_A)
    = D_B(f_1|_A, \dots, f_r|_A).
\]
Hence,
\[
D_{B,X}(C; f_1, \dots, f_r)
    = D_{B,X}(A; f_1, \dots, f_r).
\]
\end{proof}
We now derive the subadditive property for the sequential parametrized subspace homotopic distance.
\begin{proposition}
    Let $f_1, \dots, f_r \colon  X \to Y$ be fibrewise maps over $B$. Suppose $\{V_0,V_1,\dots,V_k\}$ be an open covering of $X$. Then \[D_B(f_1,\dots,f_r)+1\leq \sum_{i=0}^k \left(D_{B,X}(V_i;f_1,\dots,f_r)+1\right).\] 
\end{proposition}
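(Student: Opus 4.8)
The plan is to reduce this statement to the subadditivity result already proved for the (absolute) sequential parametrized homotopic distance, namely Proposition~\ref{sub-additivity}, by observing that the subspace distance $D_{B,X}(V_i;f_1,\dots,f_r)$ is by definition nothing but $D_B(f_1|_{V_i},\dots,f_r|_{V_i})$. First I would note that $\{V_0,\dots,V_k\}$ is an open cover of $X$, so Proposition~\ref{sub-additivity} applies verbatim and gives
\[
D_B(f_1,\dots,f_r)\leq \sum_{i=0}^{k} D_B(f_1|_{V_i},\dots,f_r|_{V_i})+k.
\]
Rewriting the summands via the definition of the subspace distance, the right-hand side equals $\sum_{i=0}^k D_{B,X}(V_i;f_1,\dots,f_r)+k$. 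Adding $1$ to both sides and absorbing the $k$ as $k = \sum_{i=0}^k 1 - 1$ then yields exactly
\[
D_B(f_1,\dots,f_r)+1\leq \sum_{i=0}^k\bigl(D_{B,X}(V_i;f_1,\dots,f_r)+1\bigr),
\]
which is the claim.

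Alternatively, if one prefers a self-contained argument rather than invoking Proposition~\ref{sub-additivity}, the proof runs by the same refinement idea: for each $i$ set $m_i=D_{B,X}(V_i;f_1,\dots,f_r)$, choose an open cover $\{V_i^0,\dots,V_i^{m_i}\}$ of $V_i$ on each member of which $f_1,\dots,f_r$ are fibrewise homotopic, and note that since $V_i$ is open in $X$ each $V_i^j$ is open in $X$ as well. The union of all these $V_i^j$ over $0\le i\le k$ and $0\le j\le m_i$ is then an open cover of $X$, with $\sum_{i=0}^k(m_i+1)$ members, on each of which $f_1,\dots,f_r$ are fibrewise homotopic; hence $D_B(f_1,\dots,f_r)+1\le\sum_{i=0}^k(m_i+1)$.

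Neither route presents any genuine obstacle: the content is entirely formal, and the only point to be careful about is the bookkeeping of the $+1$'s and the fact that openness in $V_i$ promotes to openness in $X$ because the $V_i$ are themselves open. I would write up the one-line deduction from Proposition~\ref{sub-additivity}.

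\begin{proof}
    Since $\{V_0,\dots,V_k\}$ is an open cover of $X$, Proposition~\ref{sub-additivity} yields
    \[
    D_B(f_1,\dots,f_r)\leq \sum_{i=0}^{k} D_B(f_1|_{V_i},\dots,f_r|_{V_i})+k.
    \]
    By definition, $D_B(f_1|_{V_i},\dots,f_r|_{V_i})=D_{B,X}(V_i;f_1,\dots,f_r)$ for each $i$. Adding $1$ to both sides and writing $k=\sum_{i=0}^k 1-1$, we obtain
    \[
    D_B(f_1,\dots,f_r)+1\leq \sum_{i=0}^k\bigl(D_{B,X}(V_i;f_1,\dots,f_r)+1\bigr).
    \]
\end{proof}
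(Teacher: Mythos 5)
Your proposal is correct and matches the paper's approach: the paper's own proof simply says the argument proceeds along the same lines as Proposition~\ref{sub-additivity}, and your deduction (whether by directly invoking that proposition and noting $D_{B,X}(V_i;f_1,\dots,f_r)=D_B(f_1|_{V_i},\dots,f_r|_{V_i})$ by definition, or by rerunning the refinement argument) is exactly that, since the two inequalities differ only by the bookkeeping $\sum_{i=0}^k(m_i+1)=\sum_{i=0}^k m_i+k+1$. Your remark that openness of the $V_i^j$ in $V_i$ promotes to openness in $X$ is the one point worth making explicit, and you made it.
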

\begin{proof}
   The proof proceeds along the same lines as that of Proposition~\ref{sub-additivity}.
\end{proof}

We now establish that the sequential parametrized subspace homotopic distance also admits a description in terms of the fibrewise sectional category.

Suppose $f_1,\dots,f_r\colon  X\to Y$ be fibrewise maps over $B$, we consider the restriction of the map $\widetilde{\Pi}_{r,Y}\colon \mathcal{P}_B(f_1,\dots,f_r)\to X$ to the preimage of a subspace $A\subseteq X$. Namely, we define 
\[\widetilde{\Pi}_{r,A}:=\widetilde{\Pi}_{r,Y}\vert_{(\widetilde{\Pi}_{r,Y})^{-1}(A)}\colon  (\widetilde{\Pi}_{r,Y})^{-1}(A)\to A\]

Since $\Pi_{r,Y}$ in \eqref{Pullback_Diagram} is a fibrewise fibration, so is $\widetilde{\Pi}_{r,A}$. With this setup, we obtain the following result.
\begin{proposition}\label{Subspace_SPHD_vs_aecat}
    Suppose $f_1,\dots,f_r\colon X\to Y$ be fibrewise maps over $B$ and $A\subseteq X$ be a subspace. Then \[D_{B,X}(A;f_1,\dots,f_r)=\sct_B(\widetilde{\Pi}_{r,A}).\]
\end{proposition}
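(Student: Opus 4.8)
The plan is to observe that this is essentially the subspace analogue of Proposition~\ref{homotopic_distance_and_secat}, and that the same argument goes through verbatim once one sets up the correct pullback square. By definition,
\[
D_{B,X}(A;f_1,\dots,f_r) = D_B(f_1|_A,\dots,f_r|_A),
\]
so the natural strategy is to apply Proposition~\ref{homotopic_distance_and_secat} to the fibrewise maps $f_1|_A,\dots,f_r|_A \colon A \to Y$ over $B$, and then identify the resulting fibration $\widetilde{\Pi}_{r,Y|_A}$ (built from the pullback diagram \eqref{Pullback_Diagram} with $X$ replaced by $A$ and $f_i$ replaced by $f_i|_A$) with the restriction $\widetilde{\Pi}_{r,A}$ defined just above the statement.

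The key step is this identification. First I would check that $\mathcal{P}_B(f_1|_A,\dots,f_r|_A)$ is canonically homeomorphic to $(\widetilde{\Pi}_{r,Y})^{-1}(A)$: indeed, from the explicit description
\[
\mathcal{P}_B(f_1,\dots,f_r)=\{(x,(b,\gamma))\in X\times P_B(Y)\mid f_{i+1}(x)=\gamma(\tfrac{i}{r-1}),\ i=0,\dots,r-1\},
\]
the preimage under $\widetilde{\Pi}_{r,Y}$ of $A$ consists precisely of those pairs with $x\in A$, which is exactly the defining pullback for $\mathcal{P}_B(f_1|_A,\dots,f_r|_A)$. Under this identification the projection $\widetilde{\Pi}_{r,Y|_A}$ onto $A$ coincides with $\widetilde{\Pi}_{r,A}$. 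Since $\Pi_{r,Y}$ is a fibrewise fibration, pullback along $(f_1|_A,\dots,f_r|_A)$ shows $\widetilde{\Pi}_{r,A}$ is a fibrewise fibration (as already noted in the excerpt), so Proposition~\ref{homotopic_distance_and_secat} applies directly to give
\[
D_B(f_1|_A,\dots,f_r|_A)=\sct_B(\widetilde{\Pi}_{r,Y|_A})=\sct_B(\widetilde{\Pi}_{r,A}),
\]
which is the claim.

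I expect no serious obstacle: the entire content is bookkeeping to verify that forming the pullback \eqref{Pullback_Diagram} commutes with restricting the base of the maps $f_i$ along the inclusion $A\hookrightarrow X$. The one point to state carefully is that $P_B(Y)$ and $Y_B^r$ are unchanged when passing from $X$ to $A$ (they depend only on $Y$), so only the left-hand vertical map and the bottom horizontal map of \eqref{Pullback_Diagram} change, and these change exactly by restriction. Thus the proof reduces to citing Proposition~\ref{homotopic_distance_and_secat} after this naturality remark, and I would write it as such.
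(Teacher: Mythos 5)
Your proposal is correct and matches the paper's (one-line) proof, which likewise reduces the statement to Proposition~\ref{homotopic_distance_and_secat}; your explicit identification of $\mathcal{P}_B(f_1|_A,\dots,f_r|_A)$ with $(\widetilde{\Pi}_{r,Y})^{-1}(A)$, so that the proposition can be cited directly for the restricted maps rather than having its argument repeated, is exactly the bookkeeping the paper leaves implicit. No gaps.
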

\begin{proof}
    The proof follows the same method as in Proposition \ref{homotopic_distance_and_secat}.
\end{proof}
Analogously to Theorem \ref{lower-upper bound for fiberwise homotopic distance}, we obtain corresponding cohomological lower bounds and homotopy dimension -connectivity upper bounds for the sequential parametrized subspace homotopic distance.
\begin{theorem}
    Let $f_1,\dots,f_r\colon  X\to Y$ be fibrewise maps over $B$ and $A\subseteq X$ be a subspace. Moreover suppose that $A$ and $Y$ are fibrant spaces. Then
    \begin{itemize}
        \item[(a)] If for $z_1,\dots,z_k\in H^*(Y_B^r;R)$, $(\Delta_Y^r)^*(z_i)=0$ and $(f_1\vert_A,\dots,f_r\vert_A)^*(z_1 \smile \dots \smile z_k)\neq 0$, then $D_{B,X}(A;f_1,\dots,f_r)\geq k$. 
        \item[(b)] Suppose $p_Y\colon  Y\to B$ is an $k$-equivalence ($k\geq 1$) such that both $A$ and $Y$ are path-connected and $A$ has homotopy type of CW complex. Then $D_{B,X}(A;f_1,\dots,f_r)\leq \frac{hdim(A)}{k}$.
    \end{itemize}
\end{theorem}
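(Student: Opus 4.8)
The strategy is to mirror the proof of Theorem~\ref{lower-upper bound for fiberwise homotopic distance}, replacing the ambient space $X$ by the subspace $A$ and the maps $f_i$ by their restrictions $f_i|_A$, and then to invoke the relative sectional category results of \cite{GCrelsecat}. The starting point is Proposition~\ref{Subspace_SPHD_vs_aecat}, which identifies $D_{B,X}(A;f_1,\dots,f_r)$ with $\sct_B(\widetilde{\Pi}_{r,A})$, where $\widetilde{\Pi}_{r,A}\colon (\widetilde{\Pi}_{r,Y})^{-1}(A)\to A$ is obtained by restricting the pullback fibration $\widetilde{\Pi}_{r,Y}$ over $A$. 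Unravelling the description in diagram~\eqref{Pullback_Diagram} shows that $(\widetilde{\Pi}_{r,Y})^{-1}(A)=\mathcal{P}_B(f_1|_A,\dots,f_r|_A)$, so $\widetilde{\Pi}_{r,A}$ is exactly the pullback of $\Pi_{r,Y}\colon P_B(Y)\to Y_B^r$ along $(f_1|_A,\dots,f_r|_A)\colon A\to Y_B^r$.

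First I would use fibrancy to pass from fibrewise to ordinary sectional category. Since $Y$ is fibrant, $\Pi_{r,Y}$ is a Hurewicz fibration, hence so is its pullback $\widetilde{\Pi}_{r,A}$; and since $A$ is fibrant, $\mathcal{P}_B(f_1|_A,\dots,f_r|_A)$ is fibrant as well. By \cite[Theorem~2.10]{GC} we get $\sct_B(\widetilde{\Pi}_{r,A})=\sct(\widetilde{\Pi}_{r,A})$, and since $\Pi_{r,Y}$ is a Hurewicz fibration, \cite[Corollary~1.5]{GCrelsecat} gives $\sct(\widetilde{\Pi}_{r,A})=\sct_{(f_1|_A,\dots,f_r|_A)}(\Pi_{r,Y})$. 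Thus $D_{B,X}(A;f_1,\dots,f_r)=\sct_{(f_1|_A,\dots,f_r|_A)}(\Pi_{r,Y})$, and both assertions become statements about this relative sectional category.

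For part (a), I would use the factorization of the diagonal through $i_Y\colon Y\to P_B(Y)$, $i_Y(y)=(p_Y(y),c_y)$, which satisfies $\Pi_{r,Y}\circ i_Y=\Delta_Y^r$ and in which $i_Y$ is a fibrewise homotopy equivalence; hence $(\Delta_Y^r)^*(z_i)=0$ if and only if $(\Pi_{r,Y})^*(z_i)=0$. Combined with $(f_1|_A,\dots,f_r|_A)^*(z_1\smile\cdots\smile z_k)\neq 0$, \cite[Proposition~3.1(1)]{GCrelsecat} yields $\sct_{(f_1|_A,\dots,f_r|_A)}(\Pi_{r,Y})\geq k$. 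For part (b), since $p_Y\colon Y\to B$ is a $k$-equivalence, $\Pi_{r,Y}$ is a $(k-1)$-equivalence, and \cite[Proposition~3.1(2)]{GCrelsecat}, applied with $A$ path-connected and of the homotopy type of a CW complex, gives $\sct_{(f_1|_A,\dots,f_r|_A)}(\Pi_{r,Y})\leq \frac{hdim(A)}{k}$.

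The only delicate point is the bookkeeping in the first paragraph: one must confirm that $\widetilde{\Pi}_{r,A}$ is genuinely a pullback of $\Pi_{r,Y}$ along $(f_1|_A,\dots,f_r|_A)$ rather than merely a set-theoretic restriction over $A$, so that \cite[Corollary~1.5]{GCrelsecat} and \cite[Proposition~3.1]{GCrelsecat} apply with $(f_1|_A,\dots,f_r|_A)$ in the role of the reference map. This follows from the pasting law for pullbacks together with the pullback square~\eqref{Pullback_Diagram}; no hypothesis on $X$ beyond the subspace structure of $A$ is needed, since only $A$ and $Y$ enter the construction. Once this identification is in place the argument is formally identical to that of Theorem~\ref{lower-upper bound for fiberwise homotopic distance}.
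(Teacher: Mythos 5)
Your proposal is correct and follows essentially the same route as the paper: both establish the identity $D_{B,X}(A;f_1,\dots,f_r)=\sct_{(f_1|_A,\dots,f_r|_A)}(\Pi_{r,Y})$ via Proposition~\ref{Subspace_SPHD_vs_aecat}, \cite[Theorem~2.10]{GC} and \cite[Corollary~1.5]{GCrelsecat}, and then run the argument of Theorem~\ref{lower-upper bound for fiberwise homotopic distance} with $A$ in place of $X$. Your extra check that $\widetilde{\Pi}_{r,A}$ is genuinely the pullback of $\Pi_{r,Y}$ along $(f_1|_A,\dots,f_r|_A)$ is a detail the paper leaves implicit, and it is a worthwhile addition.
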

\begin{proof}
    {(a)} By combining the Proposition \ref{Subspace_SPHD_vs_aecat} with \cite[Theorem 2.10]{GC} and \cite[Corollary 1.5]{GCrelsecat}, we obtain \begin{equation}\label{subspace_SPHD_identity}
        D_{B,X}(A;f_1,\dots,f_r)=\sct_{(f_1\vert_A,\dots,f_r\vert_A)}(\Pi_{r,Y}).
    \end{equation} From this identity, the argument used in Theorem~\ref{lower-upper bound for fiberwise homotopic distance} applies verbatim and yields the desired conclusion.

    {(b)} The conclusion follows directly from \eqref{subspace_SPHD_identity}, together with \cite[Proposition 3.1 (2)]{GCrelsecat}. 
\end{proof}
In parallel with Proposition~\ref{interpretation of LS category interms of SPHD}, the fibrewise unpointed subspace LS category admits an analogous description via the sequential parametrized subspace homotopic distance. Since the argument is essentially the same as in Proposition~\ref{interpretation of LS category interms of SPHD}, we do not repeat it here.

\begin{proposition}
Let \(X\) be a fibrewise space over \(B\), and let \(A \subseteq X_{B}^{\,r-1}\) be a subspace. Then  
\[
D_{B, X_{B}^{\,r-1}}(A; i_1,\dots,i_r)
= \ct_{B, E_{B}^{\,r-1}}(A),
\]
where \(i_j \colon  X_{B}^{\,r-1} \to X_{B}^{\,r}\) is the map defined in \eqref{inclusion}.
\end{proposition}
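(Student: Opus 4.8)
The plan is to adapt the proof of Proposition~\ref{interpretation of LS category interms of SPHD} verbatim, carrying the restriction to $A$ through every stage. First I would unwind the two sides. By definition $D_{B,X_B^{r-1}}(A;i_1,\dots,i_r)=D_B(i_1|_A,\dots,i_r|_A)$, so the left-hand side is one less than the minimal size of an open cover $\{U_0,\dots,U_n\}$ of $A$ on which all the restrictions $i_1|_{U_\ell},\dots,i_r|_{U_\ell}$ are pairwise fibrewise homotopic; while $\ct_{B,X_B^{r-1}}(A)$ is one less than the minimal size of an open cover of $A$ by subsets that are fibrewise categorical in $X_B^{r-1}$, i.e. each $U_\ell\hookrightarrow X_B^{r-1}$ is fibrewise homotopic to $s_{X_B^{r-1}}\circ p_{X_B^{r-1}}|_{U_\ell}$ (here I record that the target in the statement should read $\ct_{B,X_B^{r-1}}(A)$, not $\ct_{B,E_B^{r-1}}(A)$). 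Since both invariants are computed from open covers of the \emph{same} space $A$, it suffices to show that a single open set $U\subseteq A$ is fibrewise categorical in $X_B^{r-1}$ if and only if $i_1|_U,\dots,i_r|_U$ are pairwise fibrewise homotopic, and then pass to covers.

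For the forward implication I would reuse the homotopy $H'\colon I_B(U)\to X_B^r$ from the proof of Proposition~\ref{interpretation of LS category interms of SPHD}: starting from a fibrewise contraction $H\colon I_B(U)\to X_B^{r-1}$ of $U$ onto $s_{X_B^{r-1}}\circ p_{X_B^{r-1}}|_U$, the same piecewise formula for $H'$ (inserting the coordinate $s_X\circ p_X\circ pr_1$ in the $j$-th, respectively $k$-th, slot while running $H$ forward on $[0,\tfrac12]$ and backward on $[\tfrac12,1]$) produces a fibrewise homotopy from $i_j|_U$ to $i_k|_U$, and the fibrewise condition $p_{X_B^r}\circ H'\simeq_B p_{I_B(U)}$ is verified exactly as before. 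For the reverse implication, given fibrewise homotopies $F_j\colon i_j|_U\simeq_B i_{j+1}|_U$ for $1\le j\le r-1$, I would form $\widetilde F\colon I_B(U)\to X_B^{r-1}$, $\widetilde F(u,t)=\bigl(pr_2(F_1(u,t)),\dots,pr_{j+1}(F_j(u,t)),\dots,pr_r(F_{r-1}(u,t))\bigr)$; evaluating at $t=0$ returns $u$ and at $t=1$ returns $\bigl(s_X\circ p_{X_B^{r-1}}(u),\dots,s_X\circ p_{X_B^{r-1}}(u)\bigr)=s_{X_B^{r-1}}\circ p_{X_B^{r-1}}(u)$, which exhibits $U$ as fibrewise categorical in $X_B^{r-1}$. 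Applying either construction to an open cover of $A$ yields the two inequalities $D_B(i_1|_A,\dots,i_r|_A)\le\ct_{B,X_B^{r-1}}(A)$ and $\ct_{B,X_B^{r-1}}(A)\le D_B(i_1|_A,\dots,i_r|_A)$, hence equality.

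The only genuinely new point — and the one place to watch — is that $A$ need not contain the section $s_{X_B^{r-1}}(B)$, so "fibrewise categorical" must be understood throughout in the \emph{relative} (subspace) sense of $\ct_{B,-}$; concretely one must check that the contracting homotopies $H$ and the homotopies $F_j$, $\widetilde F$ are honest maps \emph{over} $B$ on the fibrewise cylinder $I_B(U)$ and not merely fibrewise on each fibre, which is automatic since all ingredients are restrictions of fibrewise maps and $pr_l$, $s_X$, $p_X$ are fibrewise. Beyond that the argument is the same bookkeeping with time reparametrizations as in Proposition~\ref{interpretation of LS category interms of SPHD}, so, as the authors indicate, no new ideas are needed and the work is purely notational.
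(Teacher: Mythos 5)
Your proposal is correct and follows exactly the route the paper intends: the paper omits the proof, stating only that the argument is the same as that of Proposition~\ref{interpretation of LS category interms of SPHD} restricted to the subspace $A$, which is precisely what you carry out (including the correct observation that $E_{B}^{\,r-1}$ in the statement should read $X_{B}^{\,r-1}$).
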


Next, we show the fibrewise homotopy invariance property of the sequential parametrized subspace homotopic distance.
\begin{proposition}
    Let $f_1,\dots,f_r\colon  X\to Y$ be fibrewise maps over $B$ and $A\subseteq X$ be a subspace.
    \begin{itemize}
        \item[(i)] Suppose the map $h\colon  Y\to Y'$ has a left fibrewise homotopy inverse. Then $D_{B,X}(A;h\circ f_1,\dots,h\circ f_r)= D_{B,X}(A;f_1,\dots,f_r)$.
        \item[(ii)] Suppose for a fibrewise map $H\colon  X'\to X$, there exists a map $H'\colon  X\to X'$ such that $H\circ H'\vert_A\simeq_B id_A$ . Then $D_{B,X}(H'(A);f_1\circ H,\dots, f_r\circ H)= D_{B,X}(A;f_1,\dots,f_r)$.
    \end{itemize}
\end{proposition}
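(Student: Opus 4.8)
The plan is to deduce both parts from the fibrewise homotopy invariance of the (non‑subspace) sequential parametrized homotopic distance established in Proposition~\ref{homotopic_distance_under_fibre_he}, applied to the restricted maps. Recall that a subspace $A\subseteq X$ is itself a fibrewise space over $B$ via $p_X|_A$, so that $D_{B,X}(A;-)$ is by definition just $D_B$ of the restrictions, and that $h\circ(f_i|_A)=(h\circ f_i)|_A$ for any fibrewise $h\colon Y\to Y'$. For (i): since $h$ admits a left fibrewise homotopy inverse, Proposition~\ref{homotopic_distance_under_fibre_he}(1) applied to the fibrewise maps $f_1|_A,\dots,f_r|_A\colon A\to Y$ gives
\[
D_B\big(h\circ(f_1|_A),\dots,h\circ(f_r|_A)\big)=D_B(f_1|_A,\dots,f_r|_A).
\]
Rewriting $h\circ(f_i|_A)=(h\circ f_i)|_A$ and using the definition of the subspace distance, this is precisely $D_{B,X}(A;h\circ f_1,\dots,h\circ f_r)=D_{B,X}(A;f_1,\dots,f_r)$.

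For (ii): the hypothesis $H\circ H'|_A\simeq_B id_A$ presupposes the containment $H(H'(A))\subseteq A$, so the restriction of $H$ corestricts to a fibrewise map $\widehat{H}\colon H'(A)\to A$, and $H'|_A$ corestricts to a fibrewise map $\widetilde{H}'\colon A\to H'(A)$. First I would record the identity $(f_i\circ H)|_{H'(A)}=(f_i|_A)\circ\widehat{H}$ for each $i$, which turns the left-hand side of the claimed equality into $D_B\big((f_1|_A)\circ\widehat{H},\dots,(f_r|_A)\circ\widehat{H}\big)$. Next I would observe that $\widehat{H}\circ\widetilde{H}'=(H\circ H')|_A\simeq_B id_A$, i.e. $\widetilde{H}'$ is a right fibrewise homotopy inverse of $\widehat{H}$. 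Proposition~\ref{homotopic_distance_under_fibre_he}(2), applied to $\widehat{H}\colon H'(A)\to A$ and the fibrewise maps $f_1|_A,\dots,f_r|_A$, then yields
\[
D_B\big((f_1|_A)\circ\widehat{H},\dots,(f_r|_A)\circ\widehat{H}\big)=D_B(f_1|_A,\dots,f_r|_A)=D_{B,X}(A;f_1,\dots,f_r),
\]
which is the assertion. Alternatively, one can bypass Proposition~\ref{homotopic_distance_under_fibre_he} and use only the composition inequality of Proposition~\ref{Properties}(6)(b) together with the homotopy invariance of Proposition~\ref{Properties}(4), via the two-sided estimate $D_B(f_1|_A,\dots)\leq D_B((f_1|_A)\circ\widehat{H},\dots)\leq D_B(f_1|_A,\dots)$.

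The only real obstacle is the bookkeeping with subspace inclusions: one must confirm that $\widehat{H}$ and $\widetilde{H}'$ are genuine fibrewise maps — which rests on the containment $H(H'(A))\subseteq A$ built into the hypothesis — and that $\widehat{H}\circ\widetilde{H}'$ is literally $(H\circ H')|_A$ as a self-map of $A$, so that the given fibrewise homotopy to $id_A$ is a homotopy inside $A$ and hence a legitimate right fibrewise homotopy inverse. Once these identifications are made, both statements follow immediately.
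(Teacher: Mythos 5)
Your argument is correct: reducing both parts to Proposition~\ref{homotopic_distance_under_fibre_he} (or to Proposition~\ref{Properties}(4) and (6)(b)) applied to the restricted maps $f_i|_A$ is exactly the intended route — the paper states this proposition without proof, but proves the immediately preceding subspace result in precisely this way. Your side remark that the hypothesis $H\circ H'|_A\simeq_B id_A$ must be read as forcing $H(H'(A))\subseteq A$, so that $\widehat{H}\colon H'(A)\to A$ is a genuine fibrewise map with right homotopy inverse $\widetilde{H}'$, is the one point that needs saying and you say it correctly.
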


We note that the subspace analogue of the Theorem~\ref{sub-additive} also holds for the invariant \(D_{B,X}(A;\,\cdot)\). Since the argument is obtained by restricting all maps and homotopies to the subspace \(A\subseteq X\), the proof is entirely parallel to the absolute case, and we therefore omit the statement.

In the next proposition, we show that a product inequality also holds for the subspace version of the sequential parametrized homotopic distance.
\begin{proposition}\label{D_{B,X}_on_product}
    Let $f_1,\dots,f_r\colon  X\to Y$ and $g_1,\dots,g_r\colon  \widetilde{X}\to \widetilde{Y}$ be fibrewise maps over $B$ and $A, \widetilde{A}$ are subspaces of $X$ and $\widetilde{X}$, respectively. If $A\times_B \widetilde{A}$ is a normal space, then
    \[D_{B,X\times_B \widetilde{X}}(A\times_B \widetilde{A};f_1\times_B g_1,\dots,f_r\times_B g_r)\leq D_{B,X}(A;f_1,\dots,f_r)+ D_{B,\widetilde{X}}(\widetilde{A};g_1,\dots,g_r).\]
\end{proposition}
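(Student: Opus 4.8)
The plan is to reduce the subspace statement to the absolute product inequality already established in Proposition~\ref{D_B_on_product}. By definition, the left-hand side is
\[
D_{B,X\times_B \widetilde{X}}(A\times_B \widetilde{A};f_1\times_B g_1,\dots,f_r\times_B g_r)
= D_B\bigl((f_1\times_B g_1)|_{A\times_B \widetilde{A}},\dots,(f_r\times_B g_r)|_{A\times_B \widetilde{A}}\bigr),
\]
while the two terms on the right are $D_B(f_1|_A,\dots,f_r|_A)$ and $D_B(g_1|_{\widetilde{A}},\dots,g_r|_{\widetilde{A}})$. So the whole statement follows once we observe the identification of fibrewise maps
\[
(f_i\times_B g_i)|_{A\times_B \widetilde{A}} = (f_i|_A)\times_B (g_i|_{\widetilde{A}})\colon A\times_B \widetilde{A}\to Y\times_B \widetilde{Y},
\]
which is immediate from the pointwise definition of the fibrewise product map. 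After this identification, $A\times_B \widetilde{A}$ plays the role of the normal domain, $f_i|_A$ and $g_i|_{\widetilde{A}}$ play the roles of $f_i$ and $g_i$, and Proposition~\ref{D_B_on_product} applies verbatim to give
\[
D_B\bigl((f_1|_A)\times_B (g_1|_{\widetilde{A}}),\dots,(f_r|_A)\times_B (g_r|_{\widetilde{A}})\bigr)
\leq D_B(f_1|_A,\dots,f_r|_A)+D_B(g_1|_{\widetilde{A}},\dots,g_r|_{\widetilde{A}}),
\]
which is exactly the claimed inequality once both sides are rewritten in subspace notation.

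The steps, in order, are: (i) unwind all three subspace distances into absolute distances of restricted maps using the definition of $D_{B,X}(A;-)$; (ii) verify the restriction--product compatibility $(f_i\times_B g_i)|_{A\times_B \widetilde{A}}=(f_i|_A)\times_B(g_i|_{\widetilde{A}})$ by checking it pointwise on elements $(a,\tilde a)$ with $p_X(a)=p_{\widetilde{X}}(\tilde a)$; (iii) invoke the normality hypothesis on $A\times_B\widetilde{A}$ and apply Proposition~\ref{D_B_on_product}; (iv) re-express the resulting bound in subspace notation. None of these steps is delicate: step~(ii) is a triviality about how products and restrictions interact, and the only hypothesis genuinely used is that $A\times_B\widetilde{A}$ is normal, which is precisely what Proposition~\ref{D_B_on_product} requires of its product domain.

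If there is any subtle point, it is making sure that the subspace product $A\times_B\widetilde{A}$ is literally the same fibrewise space over $B$ as the fibrewise subspace $(A\times_B\widetilde{A})\subseteq X\times_B\widetilde{X}$ with the subspace topology and the restricted projection — i.e.\ that forming the fibrewise product and then restricting to $A$ and $\widetilde{A}$ agrees with first restricting and then taking the fibrewise product. This is standard (the fibred product is a limit and commutes with the inclusions), so the proof is genuinely short; accordingly I would phrase it as: "The statement is immediate from the definition of $D_{B,X}(A;-)$, the identity $(f_i\times_B g_i)|_{A\times_B\widetilde{A}}=(f_i|_A)\times_B(g_i|_{\widetilde{A}})$, and Proposition~\ref{D_B_on_product} applied to the maps $f_i|_A$ and $g_i|_{\widetilde{A}}$ over the normal space $A\times_B\widetilde{A}$," possibly spelling out the two displays above for the reader's convenience.
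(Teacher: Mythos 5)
Your proof is correct. It is worth noting, though, that it is not quite the route the paper takes: the paper's proof is the one-line remark that one should re-run the argument of Proposition~\ref{D_B_on_product} with all maps and homotopies restricted to $A$ and $\widetilde{A}$, i.e.\ it repeats the open-cover/normality argument on the subspaces. You instead observe that the subspace statement is \emph{literally an instance} of the absolute one: since $A$ and $\widetilde{A}$ are themselves fibrewise spaces over $B$ via the restricted projections, and since $(f_i\times_B g_i)|_{A\times_B\widetilde{A}}=(f_i|_A)\times_B(g_i|_{\widetilde{A}})$ with $A\times_B\widetilde{A}$ carrying the same topology whether formed as a fibred product of subspaces or as a subspace of $X\times_B\widetilde{X}$, Proposition~\ref{D_B_on_product} applied to $f_i|_A$ and $g_i|_{\widetilde{A}}$ over the normal space $A\times_B\widetilde{A}$ gives exactly the claimed inequality after unwinding the definition of $D_{B,X}(A;-)$. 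This formal reduction is cleaner in that it uses the absolute proposition as a black box and isolates the only point that needs checking (compatibility of restriction with fibrewise products); the paper's phrasing instead signals that the reader should redo the \cite[Lemma 4.3]{oprea2011mixing} gluing argument on the restricted covers. Both are valid, and yours is arguably the better write-up since it makes explicit why no new argument is needed.
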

\begin{proof}
The proof follows the same argument as Proposition~\ref{D_B_on_product}.
\end{proof}

\vspace{1cm}

\noindent \textbf{Acknowledgment}:
N. Daundkar gratefully acknowledge the support of DST–INSPIRE Faculty Fellowship (Faculty Registration No. IFA24-MA218), as well as Industrial Consultancy and Sponsored Research (IC\&SR), Indian Institute of Technology Madras for the New Faculty Initiation Grant (RF25261395MANFIG009294). Ankur Sarkar was supported by the Centre for Operator Algebras, Geometry, Matter and Spacetime, Ministry of Education, Government of India through Indian Institute of Technology Madras [Project no. SB22231267MAETWO008573].

\bibliographystyle{plain} 
\bibliography{references}

\end{document}